\documentclass[11pt]{amsart}

\usepackage{amssymb}
\usepackage{amsmath}
\usepackage{amsthm}
\usepackage{amscd}
\usepackage{bbm}
\usepackage{changepage}
\usepackage{enumerate}
\usepackage{fancybox}
\usepackage{fancyhdr}
\usepackage{float}
\usepackage{marvosym}
\usepackage{mathrsfs}
\usepackage[pdftex]{graphicx}
\usepackage{setspace}
\usepackage{subfig}
\usepackage{theoremref}
\usepackage{verbatim}
\usepackage{xy}
%%%%%%%%%%%%%%%%%%%%%%
\usepackage{ amsmath, amsthm, amsfonts, amssymb, color}
\usepackage{amsfonts, amsmath}
\usepackage{amsmath,amstext,amsthm,amssymb,amsxtra}
 \usepackage[colorlinks, citecolor=blue,pagebackref,hypertexnames=false]{hyperref}

\textheight=22.9cm
\textwidth = 5.83 true in
\topmargin=-0.9cm
\marginparsep=0cm
\oddsidemargin=0.7cm
\evensidemargin=0.7cm
\headheight=13pt
\headsep=0.8cm
\parskip=0pt
\hfuzz=6pt

\def\XXint#1#2#3{{\setbox0=\hbox{$#1{#2#3}{\int}$}
     \vcenter{\hbox{$#2#3$}}\kern-.5\wd0}}

\hypersetup{
    colorlinks,
    citecolor=green,
    linkcolor=red
  }

\makeatletter
\def\blfootnote{\xdef\@thefnmark{}\@footnotetext}
\makeatother

\newcommand{\xx}{\times}

\newcommand{\R}{\mathbb{R}}
\newcommand{\N}{\mathbb{N}} 
\newcommand{\br}[1]{\left( #1 \right)}
\newcommand{\brs}[1]{\left[ #1 \right]}
\newcommand{\norm}[1]{\left\Vert #1 \right\Vert}
\newcommand{\abs}[1]{\left\vert #1 \right\vert}
\newcommand{\lb}[0]{\left\lbrace}
\newcommand{\rb}[0]{\right\rbrace}

\newtheorem{thm}{Theorem}[section]
\newtheorem{prop}{Proposition}[section]
\newtheorem{lem}{Lemma}[section]
\newtheorem{cor}{Corollary}[section]

\theoremstyle{definition}

\newtheorem{deff}{Definition}[section]
\newtheorem{rmk}{Remark}[section]

\newtheorem*{notation}{Notation}

\title[Square Functions on a Class of
Non-Doubling Manifolds]{Vertical and horizontal Square Functions on a Class of
	Non-Doubling Manifolds}
\author{Julian Bailey, Adam Sikora}
\address {Julian Bailey, Department of Mathematics, Macquarie University, NSW 2109, Australia}
\email{u4545137@alumni.anu.edu.au}

\address {Adam Sikora, Department of Mathematics, Macquarie University, NSW 2109, Australia}
\email{Adam.Sikora@mq.edu.au}

\subjclass{42B20 (primary), 47F05, 58J05 (secondary).}
\keywords{Non-doubling spaces; square functions; resolvent estimates.}

\date{}

\begin{document}

\begin{abstract}
We consider a class of non-doubling manifolds $\mathcal{M}$ that are the connected
sum of a finite number of $N$-dimensional manifolds of the form
$\R^{n_{i}} \xx \mathcal{M}_{i}$. Following on from the work of
Hassell and the second author \cite{hs2019}, a particular decomposition of the
resolvent operators $(\Delta + k^{2})^{-M}$, for $M \in \N^{*}$, will be
used to demonstrate that the vertical square function operator
$$
Sf(x) :=  \br{ \int^{\infty}_{0} \abs{t \nabla (I + t^{2}
  \Delta)^{-M}f(x)}^{2} \frac{dt}{t}}^{\frac{1}{2}}
$$
is bounded on $L^{p}(\mathcal{M})$ for $1 < p < n_{min} = \min_{i}
n_{i}$ and weak-type $(1,1)$. In addition, it will be proved that the reverse inequality
$\norm{f}_{p} \lesssim \norm{S f}_{p}$ holds for $p \in (n_{min}',n_{min})$ and
that $S$ is unbounded for $p \geq n_{min}$ provided $2 M <
n_{min}$.

 Similarly, for $M > 1$, 
this method of proof will also be used to ascertain that the horizontal square function operator
$$
sf(x) := \br{\int^{\infty}_{0} \abs{t^{2}\Delta (I + t^{2}
    \Delta)^{-M}f(x)}^{2} \, \frac{dt}{t}}^{\frac{1}{2}}
$$
is bounded on $L^{p}(\mathcal{M})$ for all $1 < p < \infty$ and weak-type $(1,1)$. Hence, for $p
\geq n_{min}$, the
vertical and horizontal square function operators are not equivalent
and their corresponding Hardy spaces $H^p$ do not coincide.
  \end{abstract}

  \maketitle

  \section{Introduction}

%  \blfootnote{\textit{Key words and phrases.}
%Non-doubling spaces; square functions; resolvent estimates. \\
%  Mathematics Subject Classification. Primary: 42B20; Secondary:
%  47F05, 58J05.}

In any historical account of the development of harmonic analysis, the doubling
condition will certainly appear as a central actor. In each step of
its genesis, from the minds of many great mathematicians in the 1960's and 1970's, the doubling condition was interwoven, seemingly
inextricably, throughout the entirety of the body of work that embodied
harmonic analysis.
 In a metric measure space $(X,d,\mu)$,  the doubling
condition reads that there must exist a constant $C > 0$ such that
$$
\mu(B(x,2r)) \leq C \mu(B(x,r))
$$
for all $x \in X$ and $r > 0$, where the notation $B(x,r)$ is used to
denote the ball of radius $r$ and centered at the point $x$. If this
condition is satisfied then $(X,d,\mu)$ is said to be a
space of homogeneous type, in the sense of Coifman
and Weiss \cite{CoW}, whilst any space that does
not satisfy this condition is called non-homogeneous.

Although
this condition aligns with our real world intuition,  with the continued progress of mathematics as
a whole there are now many different situations and applications that depart from this idealised
world and demand the consideration
of non-homogeneous spaces. Moreover, it has
become increasingly apparent that the doubling condition is not quite
as critically indispensable for many harmonic analytic results as previously believed.
As such, and acting as a reversal to the assimilation of the doubling
condition in the adolescence of the field, there is now an intensive
research effort underway that aims to unthread the doubling 
strand from this body of work, where possible, and push the boundaries of harmonic
analysis beyond this condition.
 This
process must often be approached with the utmost care since one is
certain to encounter behaviours that depart very far from the
prototypical doubling example of Euclidean space
$\R^{d}$. Some examples of
notable results in this area include: the extension of
Calder\'{o}n-Zygmund theory to non-homogeneous spaces through the work of Nazarov, Treil and Volberg \cite{nazarov1998weak,nazarov1998cauchy,nazarov2003Tb} and Tolsa
\cite{tolsa2001proof,tolsa2001littlewood,tolsa2011calderon}; the generalisation of $BMO$ and $H^{1}$
theory by Bui and Duong \cite{bui2013hardy} and Tolsa \cite{tolsa2001BMO}; the consideration of weighted norm
inequalities through the work of Orobitg and P\'{e}rez \cite{orobitg2002Ap}; and
non-homogeneous Tb type theorems by Hyt\"{o}nen and Martikainen
\cite{hytonen2012nonhomogeneous}.

The non-doubling spaces that are of interest to
us in this article consist of a particular class of non-doubling manifolds formed as
connected sums.

\begin{deff} 
 \label{def:ConnectedSum} 
A manifold $\mathcal{V}$ is said to be formed as the connected sum of a
 finite number of complete and connected manifolds $\mathcal{V}_{1},
 \cdots, \mathcal{V}_{l}$ of the same dimension, denoted
 $$
\mathcal{V} = \mathcal{V}_{1} \# \cdots \# \mathcal{V}_{l},
 $$
 if there exists some compact subset with non-empty interior $K \subset \mathcal{V}$ for which
 $\mathcal{V} \setminus K$ can be expressed as the disjoint union
 of open subsets $E_{i} \subset \mathcal{V}$ for $i = 1, \cdots, l$,
 where each $E_{i}$ is isometric to $\mathcal{V}_{i} \setminus K_{i}$ for some
compact $K_{i} \subset \mathcal{V}_{i}$. 
\end{deff}

Fix dimension $N \in \N^{*} := \N \setminus \lb 0 \rb$, $l \geq 1$ and let $\R^{n_{i}} \xx \mathcal{M}_{i}$  for $i = 1, \cdots, l$
be a collection of manifolds with $\mathcal{M}_{i}$ compact and $n_{i}
+ \mathrm{dim} \, \mathcal{M}_{i} = N$. We will be interested in
smooth Riemannian manifolds $\mathcal{M}$ that are of the form
$$
\mathcal{M} := (\R^{n_{1}} \xx \mathcal{M}_{1}) \# \cdots \#
(\R^{n_{l}} \xx \mathcal{M}_{l}).
$$
As in the previous definition, it is possible to choose open subsets $E_{i}
\subset \mathcal{M}$ and compact $K \subset \mathcal{M}$ with
non-empty interior for which $\mathcal{M} \setminus K$ can be expressed as the
disjoint union of the $E_{i}$. The subsets $E_{i}$ are referred to as
the ends of $\mathcal{M}$, $K$ the center of $\mathcal{M}$ and the entire manifold $\mathcal{M}$
itself is fittingly said to be a manifold with ends.

The constituent manifolds $\R^{n_{i}} \xx \mathcal{M}_{i}$ for $1
\leq i \leq l$ each have topological dimension $N$, but asymptotic
dimension $n_{i}$ at infinity. That is, for any ball $B(x,r) \subset
\R^{n_{i}} \xx \mathcal{M}_{i}$, it will be true that the volume of
the ball will satisfy
$$
V(B(x,r)) \simeq \left\lbrace \begin{array}{c c} r^{N} & for \ r \leq
                                                         1, \\
                                r^{n_{i}} & for \ r > 1. \end{array} \right.
$$
If the values of $n_{i}$ differ, then the manifold with ends $\mathcal{M}$ will have varying
asymptotic dimension on these ends. This will lead to a
violation of the doubling condition.

In the landmark article by Grigoryan and Saloff-Coste
\cite{grigoryan}, the authors effectively computed, using
probabilistic methods, two-sided estimates
for the heat kernel generated by the Laplacian $\Delta$ on this prototypical collection of non-doubling
spaces. Although not the first to study this
remarkable class of
manifolds in detail (see \cite{Nix} for a detailed historical account), this article acted
as an inflection point for interest in this class and had a
pronounced effect on the non-homogeneous community. Indeed, it essentially designated this class of manifolds
as a battlefront for the advancement of non-homogeneous harmonic
analysis. In point of fact, the sustained interest in these model spaces has
led to investigations into the boundedness of the heat
maximal operators \cite{duong2013boundedness}, the functional calculus of
$\Delta$ \cite{bui2020functional} and
Littlewood-Paley decompositions \cite{bouclet2010littlewood}. Recently, and of particular significance
to our article, Hassell in
collaboration with the second named author considered the $L^{p}(\mathcal{M})$-boundedness
of the Riesz transforms operator $\nabla \Delta^{-\frac{1}{2}}$ on
such manifolds \cite{hs2019}. This paper in tern is a generalisation to the non-doubling setting  of the 
result obtained by Carron, Coulhon and Hassell in \cite{CarronCH}. For other relevant results we refer the reader to \cite{Ca2, Dev} and references therein.

 In this article,  it is our aim to extend the classical
 theory of square functions to this class of non-doubling
 manifolds. 
\begin{comment}
The theory square functions is developed and well understood. However, the situation is completely  different outside of the standard setting of the doubling condition and
homogeneous spaces, which is a main rationale of our study. 
\end{comment}
 Consider a general complete Riemannian manifold $\mathcal{M}$ and let
 $\Delta$  denote the Laplace-Beltrami operator for this manifold.
For $M \in \N^{*}$, the vertical square function operator is defined: 
\begin{equation}
  \label{eqtn:Vertical}
S f(x) := \br{ \int^{\infty}_{0} \abs{t \nabla (I + t^{2}
  \Delta)^{-M}f(x)}^{2} \frac{dt}{t}}^{\frac{1}{2}}.
\end{equation}
The notion of square functions forms an essential part
 of harmonic analysis and has numerous applications, from the
 definition of Hardy spaces \cite{can23} to providing an equivalent characterisation of the bounded holomorphic functional calculus of a
 sectorial operator \cite{cowling1996banach}. As such, the above
 operators have been extensively studied, either in this form or
defined with the semigroup replacing the higher-order resolvent (c.f. Remark \ref{rmk:Vertical}), since the formation of
harmonic analysis, and a great deal is
known about their behaviour when the manifold under consideration is
doubling.

\begin{enumerate}
\item[$\bullet$] For the classical case when $\mathcal{M}$ is simply
  Euclidean space $\R^{d}$, $S$ is bounded on $L^{p}(\R^{d})$ for any
  $p \in (1,\infty)$ and weak-type $(1,1)$.
  \item[$\bullet$] For general complete Riemannian manifolds, the work
    of Coulhon, Duong and Li \cite{coulhon2003littlewood} states that
    the semigroup variation of the vertical square function is bounded on $L^{p}(\mathcal{M})$
for all $p \in (1,2]$, whether doubling or not, and weak-type $(1,1)$
if the manifold is doubling and if Gaussian upper bounds are satisfied
by the heat kernel.

\item[$\bullet$] For doubling manifolds $\mathcal{M}$ whose heat
  kernel satisfies Gaussian upper bounds bounds, one can define
  $$
q_{+} := \sup \lb p \in (1,\infty) : \norm{\abs{\nabla
    \Delta^{-\frac{1}{2}} f}}_{p} \lesssim \norm{f}_{p} \rb.
$$
It is then known that $q_{+} \geq 2$ \cite{coulhon1999riesz} and that
the semigroup variation of $S$ is bounded on
$L^{p}(\mathcal{M})$ for any $p \in (1,q_{+})$. A sparse proof of this
was shown in {\cite[Prop.~3.8]{bailey2020quadratic}}.
  \end{enumerate}

Nothing is currently known about the $L^{p}(\mathcal{M})$-boundedness
of the vertical square function for $p > 2$ on non-doubling manifolds. 
Our aim in this article is to prove the following theorem.

\begin{thm} 
 \label{thm:Main} 
Let $\mathcal{M} = (\R^{n_{1}} \xx \mathcal{M}_{1}) \# \cdots \#
(\R^{n_{l}} \xx \mathcal{M}_{l})$ be a manifold with ends. For $M \in
\N^{*}$, the vertical
square function operator $S$, as defined in \eqref{eqtn:Vertical},
will satisfy the following properties:

\begin{enumerate}
\item[(i)] $S$ is bounded on $L^{p}(\mathcal{M})$ for all $p \in
  (1,n_{min})$, where $n_{min} := \min_{i} \, n_{i}$, and weak-type $(1,1)$;
  
  \item[(ii)]  If $2M < n_{min}$ then $S$ is unbounded on $L^{p}(\mathcal{M})$ for $p \geq
    n_{min}$; and
    
    \item[(iii)] For $p \in (n_{min}',n_{min})$, there
exists $c > 0$ for which
$$
\norm{f}_{p} \leq c \norm{S f}_{p}
$$
for all $f \in L^{p}(\mathcal{M})$.
  \end{enumerate}
 \end{thm}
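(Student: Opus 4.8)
\medskip

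The plan is to exploit the structure of $\mathcal{M}$ as a manifold with ends by combining a local result on the ``doubling part'' of the manifold with a careful analysis of the behaviour near each Euclidean end $E_i \cong \R^{n_i}\times\mathcal{M}_i$. For part (i), I would first record that, by the Grigor'yan--Saloff-Coste heat kernel estimates, $\mathcal{M}$ is locally doubling with Gaussian upper bounds at small scales, so that the classical Littlewood--Paley theory (the Coulhon--Duong--Li argument) gives boundedness of $S$ on $L^p$ for $p\in(1,2]$ and weak-type $(1,1)$ for the ``local'' piece; the content is entirely in the range $p\in(2,n_{min})$ and in controlling the global, long-range part of the resolvent kernel. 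The key device, following Hassell and the second author \cite{hs2019}, is the decomposition of $(\Delta+k^2)^{-M}$ into a ``local'' term (supported near the diagonal, handled by Calder\'on--Zygmund theory on the doubling part) and a ``global'' term expressed via the resolvents on the model spaces $\R^{n_i}\times\mathcal{M}_i$ glued across the compact center $K$. On each end, after separating variables the relevant operator reduces to a resolvent power on $\R^{n_i}$ (times a spectrally truncated piece on the compact factor $\mathcal{M}_i$), whose gradient kernel $t\nabla(I+t^2\Delta_{\R^{n_i}})^{-M}$ obeys the standard pointwise bounds; the square function estimate on $\R^{n_i}$ then holds for all $p\in(1,\infty)$. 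The restriction $p<n_{min}$ enters precisely when one patches the ends to the center: the Sobolev-type embedding / mapping properties used to pass a function from one end through $K$ cost a factor that is $L^p$-bounded only for $p<n_i$ for every $i$, i.e. for $p<n_{min}$. I would isolate this in a lemma stating that the relevant ``gluing'' operator (essentially $\chi_{K}\Delta^{-\alpha}\chi_{E_i}$-type terms arising from commutators with cutoffs) is bounded on $L^p(\mathcal{M})$ iff $p<n_{min}$, and weak-type $(1,1)$; the weak-type claim follows from a Calder\'on--Zygmund decomposition adapted to the (locally doubling) measure, using that the bad part is supported away from the diagonal where the kernel decays polynomially with the ``correct'' exponent.

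\medskip

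For part (iii), the reverse inequality, the standard route is a duality/quadratic-estimate argument: one writes $\|f\|_p \lesssim \|Sf\|_p$ by pairing against $g\in L^{p'}$ and using a reproducing formula
$$
\langle f,g\rangle = c_M \int_0^\infty \langle t\nabla(I+t^2\Delta)^{-M}f,\; t\nabla(I+t^2\Delta)^{-M'}g\rangle\,\frac{dt}{t} + (\text{harmless term}),
$$
valid on the orthogonal complement of harmonic functions (here $\mathcal{M}$ has no $L^2$-harmonic functions, so this is everything), followed by Cauchy--Schwarz in $t$ and H\"older in $x$. This reduces matters to the $L^{p'}$-boundedness of a dual square function of the same type; since $p\in(n_{min}',n_{min})$ forces $p'\in(n_{min}',n_{min})$ as well, part (i) applies to both $S$ and its dual, closing the argument. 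The only subtlety is justifying the reproducing formula and the convergence of the $t$-integral at $0$ and $\infty$, which follows from spectral calculus on $L^2$ together with the absence of a kernel for $\Delta$ on this manifold.

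\medskip

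For part (ii), the unboundedness for $p\geq n_{min}$ when $2M<n_{min}$, I would construct an explicit counterexample concentrated on the end $E_j$ with $n_j=n_{min}$. Choosing $f$ to be (a smooth truncation of) a function that looks like a suitable bump translated far out along the $\R^{n_j}$-factor, the global part of $(I+t^2\Delta)^{-M}f$ on another end $E_i$ with $n_i>n_j$ will, for $t$ large, behave like the Newtonian-type potential of mass escaping through the center; the gradient of this tail decays like $|x|^{-(n_j-1)}$ times a factor involving $t$, and integrating $|t\nabla(\cdots)|^2\,dt/t$ produces a function whose $L^p$ norm over the $i$-th end diverges exactly when $p\geq n_{min}$ — the condition $2M<n_{min}$ ensures the relevant $t$-integral is dominated by the large-$t$ regime where this slow spatial decay is visible rather than being killed by the resolvent power. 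The main obstacle in the whole argument is the bookkeeping in part (i): tracking how the cutoffs, commutators, and the spectral decomposition on the compact fibers interact so that the off-diagonal kernel estimates are uniform in $t$ and genuinely exhibit the threshold $n_{min}$; this is where the decomposition of the resolvent from \cite{hs2019} does the heavy lifting, and adapting it from the Riesz transform ($M=\tfrac12$ in effect) to the square function with an arbitrary integer power $M$ and an extra $t$-integration is the technical heart of the paper.
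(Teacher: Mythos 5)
Your proposal for parts (i) and (iii) tracks the paper's architecture reasonably well: the high/low energy split in the spectral variable, the use of the four-term parametrix decomposition of $(\Delta+k^2)^{-1}$ from \cite{hs2019} (extended to $(\Delta+k^2)^{-M}$), the observation that the threshold $p<n_{min}$ comes from gluing the ends through $K$ (in the paper, this is the content of the $\Pi^i_2$ and $S^3_<$ estimates), and the duality argument for the reverse inequality. Two minor deviations in (i): the paper does not invoke Coulhon--Duong--Li for $p\in(1,2]$ but proves the whole range $(1,n_{min})$ plus weak-type $(1,1)$ directly from the decomposition, and the high-energy part is handled not by vanilla Calder\'on--Zygmund but by finite speed of propagation together with pseudodifferential symbol estimates, in a vector-valued setting; these are organizational rather than conceptual differences.

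Part (ii) is where there is a genuine gap. You propose testing with a bump translated far out along the small end $E_j$ and examining $Sf$ on a bigger end $E_i$ with $n_i>n_j$, arguing that the tail there decays like $|x|^{-(n_j-1)}$ and fails to be $L^p$ for $p\geq n_{min}$. This has the geometry inverted. The spatial decay of the gradient of the global kernel on $E_i$ is governed by $n_i$, not $n_j$ (it behaves like $\omega_1(x,k)\simeq\langle d(x_i^\circ,x)\rangle^{1-n_i}$), so the $x$-integrability condition over $E_i$ is $p>n_i'$, a \emph{lower} bound on $p$, which cannot produce the failure at $p\geq n_{min}$. Also, a single translated bump pushed to infinity, when paired against a kernel with fixed decay in $y$, produces a rank-one output whose norm \emph{decays} with the translation; it does not saturate the operator. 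The paper's argument is qualitatively different: it first proves that the components $S^2_<$, $S^4_<$, and several rank-one auxiliary pieces are bounded on the full range $(1,\infty)$, so unboundedness must live in one specific remaining rank-one term $R$ whose kernel involves $(\Delta_{\R^{n_i}\times\mathcal{M}_i}+k^2)^{-M}(x_i^\circ,y)$ integrated against $f$ on the \emph{smallest} end, with output supported near the center. Unboundedness is then extracted by testing $R$ against the boundary-of-$L^p$ family $f_\varepsilon(y)=d(x_i^\circ,y)^{-n_i(1+\varepsilon)/p}\phi_i(y)$ and tracking the $\varepsilon\to 0$ asymptotics; the constraint $2M<n_{min}$ is used to invoke Proposition~\ref{prop:DerivativeAt0} (existence and estimates for the $k^2$-derivatives of $u$ at $k=0$), not to control which $t$-regime dominates, as you suggest. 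You would need to rethink both the choice of test functions and the explanation for the constraint $2M<n_{min}$ to make (ii) go through.
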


Let us briefly discuss the proof of the $L^{p}(\mathcal{M})$-boundedness portion of this result. 
Notice that through a change of variables $S$ has the representation
\begin{align*}\begin{split}  
 Sf(x) &= \br{ \int^{\infty}_{0} \abs{\nabla (t^{-2} + 
     \Delta)^{-M}f(x)}^{2} t^{1 -  4 M} \, dt}^{\frac{1}{2}} \\
 &= \br{\int^{\infty}_{0} \abs{\nabla (k^{2} + \Delta)^{-M} f(x)}^{2}
   k^{4 M - 3} \, dk}^{\frac{1}{2}}.
\end{split}\end{align*}
This can be controlled from above by the corresponding high and
low energy parts of this square function,
$$
Sf(x) \leq S_{>}f(x) + S_{<} f(x),
$$
where
$$
S_{>}f(x) := \br{\int^{\infty}_{1} \abs{\nabla (k^{2} + \Delta)^{-M} f(x)}^{2}
   k^{4 M - 3} \, dk}^{\frac{1}{2}}
 $$
 and
 $$
S_{<}f(x) := \br{\int^{1}_{0} \abs{\nabla (k^{2} + \Delta)^{-M} f(x)}^{2}
   k^{4 M - 3} \, dk}^{\frac{1}{2}}.
 $$
 The $L^{p}(\mathcal{M})$-boundedness of these two parts will be proved
 separately. The high energy component is local in nature and
 does not see the large scale non-doubling character of the
 manifold. It can thus be treated in a manner analogous to the classical
 Calder\'{o}n-Zygmund case. This will be accomplished in Section \ref{sec:High}.

 The low energy component, on the other
hand, will prove to be
more challenging since it involves extensive interaction between the
different ends of the manifold. Indeed, ultimately it is the low energy
component that will prove to be solely responsible for the unboundedness of the
operator on the range $p \geq n_{min}$. Another major difficulty that
appears is that, in contrast to many classical cases, it will not be
possible to treat this term using estimates
for the spatial derivative of the heat kernel $\nabla e^{-t
  \Delta}$. This is due to the pronounced abscence of these estimates in the literature.
Instead, we rely on a decomposition of the resolvent operator
 $(\Delta +
k^{2})^{-1}$ that was constructed in \cite{hs2019} using a parametrix
style argument. In Section \ref{sec:Resolvent} it will be proved that this decomposition can be
generalised to the higher-order resolvent operators $(\Delta +
k^{2})^{-M}$. Following this, in Section \ref{sec:Low}, this higher-order resolvent decomposition will be
applied to the low energy component and boundedness will ensue.

For the unboundedness portion of Theorem \ref{thm:Main}, observe that
a constraint on the order of the resolvent is required, $2M <
n_{min}$. This constraint is a consequence of our method of
proof. Indeed, our proof that $S$ is
unbounded on $L^{p}(\mathcal{M})$ for $p \geq n_{min}$ relies
heavily on the use of the Riesz potential operators
$\Delta^{-M}$. In an analogous manner to classical theory on Euclidean
space, these operators are not well-defined when the order $M$ is too
large in comparison to the dimension, namely $2 M \geq n_{min}$. As
a result, our method of proof will not be applicable for this range.
We do not study the range $2 M \geq n_{min}$ here. Therefore, we do not study the case $n_{min}=2$ which was investigated in \cite{hns2019, Nix}.

\begin{rmk}
  \label{rmk:Vertical}
  Through a careful study of the literature, one will notice
  that the
  term vertical square function is often used to refer to the operator
  $$
\br{\int^{\infty}_{0} \abs{t \nabla e^{-t \Delta}f(x)}^{2} \frac{dt}{t}}^{\frac{1}{2}},
$$
with the semigroup $e^{-t \Delta}$ taking on the role held by the
higher-order resolvent operator for $S$. Although the semigroup form
is  more
frequently encountered, our consideration of square functions defined using higher-order
resolvent operators is by no means rare. For instance, in the article
\cite{Frey2018} by
Frey, McIntosh and Portal, resolvent based conical square function estimates were proved for perturbations of Dirac-type
operators on $L^{p}(\R^{d})$.

Indeed, the two different forms of square
function are seen to be morally equivalent. For if one can prove that
$S$ is bounded on $L^{p}(\mathcal{M})$, together with a similar estimate for
$\nabla \mathrm{div}$, then one can obtain the boundedness of
the holomorphic functional calculus of the corresponding Dirac-type
operator on $L^{p}(\mathcal{M})$ (c.f. {\cite[Cor.~6.8]{cowling1996banach}}). The boundedness of the semigroup square function
would then
follow immediately as a corollary. This argument can also be reversed
in order to obtain the $L^{p}(\mathcal{M})$-boundedness of the resolvent square function from
the $L^{p}(\mathcal{M})$-boundedness of the semigroup square function.
  \end{rmk}

 As an alternative to the vertical square function $S$, one can also
 consider the horizontal square function for $M > 1$:
 \begin{equation}
   \label{eqtn:Horizontal}
s f(x) := \br{ \int^{\infty}_{0} \abs{t^{2} \Delta (I + t^{2}
  \Delta)^{-M}f(x)}^{2} \frac{dt}{t}}^{\frac{1}{2}}.
\end{equation}
For this operator, we will prove the following theorem.

\begin{thm} 
  \label{thm:Main2}
Let $\mathcal{M} = (\R^{n_{1}} \xx \mathcal{M}_{1}) \# \cdots \#
(\R^{n_{l}} \xx \mathcal{M}_{l})$ be a manifold with ends and fix $M >
1$.
 For any $p \in (1,\infty)$ the square function operator $s$, as
 defined in \eqref{eqtn:Horizontal},
 is bounded on $L^{p}(\mathcal{M})$ and there exists $c, \, C > 0$
 such that for all $f \in L^{p}(\mathcal{M})$
 $$
c \norm{f}_{p} \leq \norm{s f}_{p} \leq C \norm{f}_{p}.
 $$
 In addition, the operator $s$ is of weak-type $(1,1)$. 
\end{thm}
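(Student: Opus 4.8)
The plan is to follow the same high/low energy decomposition used for the vertical square function in Theorem~\ref{thm:Main}; the crucial point is that the absence of the gradient will now let the argument run over the whole range $p \in (1,\infty)$. After the substitution $k = 1/t$ one has
$$
sf(x) = \br{\int^{\infty}_{0} \abs{\Delta(k^{2}+\Delta)^{-M}f(x)}^{2}\, k^{4M-5}\, dk}^{\frac{1}{2}} \leq s_{>}f(x) + s_{<}f(x),
$$
where $s_{>}$ and $s_{<}$ denote the parts of the integral over $k \geq 1$ and $k \leq 1$ respectively. I would show that each of $s_{>}$ and $s_{<}$ is bounded on $L^{p}(\mathcal{M})$ for all $p \in (1,\infty)$ and of weak-type $(1,1)$, and then deduce the lower bound $c\norm{f}_{p} \leq \norm{sf}_{p}$ separately by duality.

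For the high energy component, the factor $(k^{2}+\Delta)^{-M}$ with $k \geq 1$ localises $\Delta(k^{2}+\Delta)^{-M}$ to spatial scales $\lesssim 1$: writing it through subordination as an average of the heat operators $e^{-r\Delta}$, the factor $e^{-rk^{2}}$ concentrates the average in $r \lesssim 1$. On these scales $\mathcal{M}$ has bounded geometry, is locally doubling, and carries a heat kernel with Gaussian upper bounds, so a partition-of-unity argument reduces matters to doubling model pieces on which vector-valued Calder\'{o}n--Zygmund theory applies to the $L^{2}((0,\infty),dk/k)$-valued kernel. This is the exact analogue of the treatment of $S_{>}$ in Section~\ref{sec:High}, and in fact easier since no gradient is present.

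For the low energy component I would invoke the higher-order resolvent decomposition established in Section~\ref{sec:Resolvent}, which writes $(k^{2}+\Delta)^{-M}$, up to rapidly decaying error terms, as a finite sum of terms of the form (cut-off) $\circ$ (model resolvent $(k^{2}+\Delta_{i})^{-M}$ on an end $\R^{n_{i}}\xx\mathcal{M}_{i}$, or on the centre) $\circ$ (cut-off). Applying $\Delta$ and using $M>1$, each such term splits by the Leibniz rule into: a piece where $\Delta$ falls on the model resolvent and, on the support of the cut-offs, equals $\Delta_{i}(k^{2}+\Delta_{i})^{-M}$ — a scalar function of the model Laplacian whose low energy square function is bounded on all $L^{p}(\R^{n_{i}}\xx\mathcal{M}_{i})$, $1<p<\infty$, and weak-type $(1,1)$ by classical doubling theory; and commutator pieces $[\Delta,\chi]$, which are first-order operators supported in the compact transition region and are estimated directly from the kernel bounds for the model resolvents. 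The decisive point is that the terms responsible for the unboundedness of $S$ for $p \geq n_{min}$ are precisely those generated when $\nabla$ falls on the gluing pieces of the parametrix and meets their slow, harmonic-function-type decay at infinity; replacing $\nabla$ by $\Delta$ (equivalently, by a lower-order resolvent) annihilates or greatly accelerates the decay of those pieces, so no such obstruction survives and boundedness holds for \emph{every} $p \in (1,\infty)$. This low energy estimate — carrying the bookkeeping of all commutator and error terms uniformly over $k \in (0,1)$ — is where I expect the main work to lie.

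Finally, for the two-sided estimate, the spectral theorem gives $\norm{sf}_{2} = \sqrt{c_{M}}\,\norm{f}_{2}$ with $c_{M} = \int^{\infty}_{0}\abs{\lambda(1+\lambda)^{-M}}^{2}\,\frac{d\lambda}{\lambda} \in (0,\infty)$, the integral being finite precisely because $M>1$ and the spectral subspace of $\Delta$ at $0$ being trivial on the infinite-volume manifold $\mathcal{M}$. Since $\psi(\lambda) := \lambda(1+\lambda)^{-M}$ is real-valued, $\psi(t^{2}\Delta)$ is self-adjoint and one has the Calder\'{o}n reproducing formula $f = c_{M}^{-1}\int^{\infty}_{0}\psi(t^{2}\Delta)^{2}f\,\frac{dt}{t}$, valid in $L^{p}(\mathcal{M})$ by density once the forward bound is available. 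Pairing with $g \in L^{p'}(\mathcal{M})$, moving one copy of $\psi(t^{2}\Delta)$ onto $g$, and applying Cauchy--Schwarz in $t$ followed by H\"{o}lder yields $\abs{\langle f,g\rangle} \leq c_{M}^{-1}\norm{sf}_{p}\norm{sg}_{p'} \lesssim \norm{sf}_{p}\norm{g}_{p'}$, using the forward bound for $s$ on $L^{p'}(\mathcal{M})$ — admissible since $p' \in (1,\infty)$ as well. Taking the supremum over $g$ gives $\norm{f}_{p} \lesssim \norm{sf}_{p}$. This reverse inequality works over the full range $1<p<\infty$ exactly because $s$ is built from a scalar function of $\Delta$, so its natural dual square function is of the same type — in contrast to the vertical square function, whose dual involves a divergence term and forces the restriction to $p \in (n_{min}',n_{min})$ in Theorem~\ref{thm:Main}(iii).
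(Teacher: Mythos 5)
Your proposal follows the paper's strategy closely — same high/low energy split, same use of the resolvent parametrix for the low--energy part, same duality argument for the reverse inequality, and the same observation that the symmetry of $\Delta(k^2+\Delta)^{-M}$ simplifies the high--energy argument relative to $S_{>}$. The one place you genuinely diverge from the paper is in how $\Delta$ is fed into the low--energy decomposition. You propose to decompose $(k^2+\Delta)^{-M}$ into parametrix pieces first and then apply $\Delta$, which by the Leibniz rule produces commutator terms $[\Delta,\chi]$ with the cut-off functions that then need to be estimated separately. The paper instead applies the functional--calculus identity
$$\Delta(k^2+\Delta)^{-M} = (k^2+\Delta)^{-(M-1)} - k^2(k^2+\Delta)^{-M}$$
\emph{before} invoking any decomposition, and then splits each of the two resolvent powers into its $H_i^{(j)}$ pieces. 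This is where the hypothesis $M>1$ enters cleanly (so that $(k^2+\Delta)^{-(M-1)}$ is still a genuine resolvent power), and it completely avoids commutators with cut-offs: the kernels $H_i^{(M-1)}(k)(x,y) - k^2 H_i^{(M)}(k)(x,y)$ are then estimated directly from Propositions \ref{prop:H3} and \ref{prop:H4} exactly as $\nabla H_i(k)$ was in Section \ref{sec:Low}. Your commutator route should also close, since the commutators are compactly supported and first--order, but it creates extra terms to track that the paper's algebraic identity simply never generates; if you pursue your version you should make sure the commutator pieces supported in the transition region are uniformly estimated in $k\in(0,1)$ before integrating the square function. Everything else in your outline — including the correct use of $M>1$ to make $\psi(\lambda)=\lambda(1+\lambda)^{-M}$ square--integrable against $d\lambda/\lambda$, the triviality of the spectral projection at $0$, and the reason the reverse inequality holds on the full range for $s$ but not $S$ — matches what the paper actually does.
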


This result will be proved in Section \ref{sec:Laplacian} and, similar
to the operator $S$, it will be achieved by decomposing $s$ into low
and high energy components and then by proving $L^{p}(\mathcal{M})$-boundedness for both
of these components separately. For the low energy component, we will
once again make extensive use of the decomposition for the
higher-order resolvent operators $(\Delta + k^{2})^{-M}$.

The boundedness of $s$ on $L^{p}(\mathcal{M})$ for $1 < p < \infty$, as stated in Theorem
\ref{thm:Main2}, is a result that already exists in the
literature. Indeed, the $L^{p}(\mathcal{M})$-boundedness of the semigroup based square
function,
$$
\br{\int^{\infty}_{0} \abs{t \Delta e^{-t \Delta} f(x)}^{2} \frac{dt}{t}}^{\frac{1}{2}},
$$
was proved to hold in the general symmetric Markov semigroup
setting (see {\cite[pg.~111]{stein1970topics}}). This implies that $\Delta$ possesses a bounded
$H^{\infty}(S^{o}_{\mu})$-functional calculus on $L^{p}(\mathcal{M})$
for any $\mu \in [0,\pi)$, which
immediately leads to the
boundedness of $s$ on $L^{p}(\mathcal{M})$
(c.f. {\cite[Cor.~6.8]{cowling1996banach}}).
 A new proof of this result has been included here to
illustrate the applicability of our methods to resolvent based
operators and to obtain the weak-type $(1,1)$ bounds for $s$
which, to the best of our knowledge, is a result that is new. It is
also particularly illuminating to compare the proofs for the vertical
and horizontal square functions in order to glean some intuition as to
why one is bounded on the
full reflexive range, while the other fails for $p \geq n_{min}$.

\begin{rmk}
Our results can also be viewed through the lens of Hardy spaces. In an analogous manner to the classical case $\mathcal{M} = \R^{d}$,
one can define, for $p > 0$, Hardy spaces $H_{\nabla}^{p}$ and
$H_{\Delta}^{p}$ associated with $s$
and $S$ through the norms
$$
\norm{f}_{H^{p}_{\nabla}} := \norm{S f}_{p} \quad and
\quad \norm{f}_{H^{p}_{\Delta}} := \norm{s f}_{p}.
$$
Theorem \ref{thm:Main} tells us that for $p \in (n_{min}',n_{min})$ it will be true that
$H^{p}_{\nabla} = L^{p}$, but
$H^{p}_{\nabla} \neq L^{p}$ for $p \in [n_{min},\infty)$ when $2 M < n_{min}$. In contrast,
$H^{p}_{\Delta} = L^{p}$ for all $p \in (1,\infty)$, and thus the two
square functions define distinct Hardy spaces for $p \geq n_{min}$. It
remains an open problem to check that $H^{p}_{\nabla} =
H^{p}_{\Delta}$ for $p \in (1,n_{min}']$.
\end{rmk}

\begin{rmk}
Comparing Theorem \ref{thm:Main} with the main result of \cite{hs2019}
we note a posteriori that in the considered setting  boundedness of the Riesz Transform and 
vertical square function are equivalent for any $L^p$ space, including weak type $(1,1)$ estimates. It is an interesting question whether such equivalence 
could be verified in some more general setting in the form of the abstract statement. It seems that the implication from Riesz Transform bounds to the square functions estimates can be approach 
using \cite[Theorem 9.5.1 Section 9]{hytonenNeervenWeis}. We do not know how to approach the opposite implication. We expect any result which includes  weak type $(1,1)$ estimates in any directions to be especially challenging. Here we are more interested with understanding the difference 
between horizontal and vertical square functions and we do not attempt to answer this question.  
\end{rmk}

\section{Preliminaries}
\label{sec:Preliminaries}

 Throughout this article, we fix a manifold with ends
 $\mathcal{M} = (\R^{n_{1}} \xx \mathcal{M}_{1}) \# \cdots \#
 (\R^{n_{l}} \xx \mathcal{M}_{l})$. The notation $K$
will be used to denote the center of $\mathcal{M}$ and the sets
$K_{i}$ and $E_{i}$ will be as given in Definition
\ref{def:ConnectedSum}. In particular, these sets will have the
property that $\mathcal{M}
\setminus K$ can be expressed as
the disjoint union of the ends $E_{i}$ and $E_{i} \simeq \R^{n_{i}}
\xx \mathcal{M}_{i} \setminus K_{i}$. In this manner, the
ends $E_{i}$ will be identified with the sets $\R^{n_{i}} \xx
\mathcal{M}_{i} \setminus K_{i}$ so that a point $x \in E_{i}$ can
also be viewed as belonging to the space $\R^{n_{i}} \xx
\mathcal{M}_{i} \setminus K_{i}$.

\begin{notation}
For estimates concerning two quantities $a, \, b \in \R$, the notation
$a \lesssim b$ will be employed to signify the existence
of a constant $C > 0$ such
that $a \leq C \cdot b$.  Similarly, $a \simeq b$ will denote that $a
\lesssim b$ and $b \lesssim a$ both hold. The dependence of the
constant $C$ on certain parameters should be clear from the context of
the argument under consideration.

For $x \in \R^{d}$, define $\langle x \rangle := (1 +
\abs{x}^{2})^{\frac{1}{2}}$. We employ the notation
$d(x,y)$ to denote the intrinsic distance between two
points $x$ and $y$ in some ambient Riemannian manifold.  When the space under consideration is the
entire space $\mathcal{M}$, we use the shorthand notation $L^{p}$
to denote the Lebesgue space $L^{p}(\mathcal{M})$.  Finally, for a
function $g(x,y)$ of two variables, the notation $\nabla_{x} g(x,y)$ will
be understood to denote the gradient with respect to the first variable.
  \end{notation}

Before delving into the substance of our proof, it will first be
beneficial to record some useful estimates satisfied by the
higher-order resolvent operators on the constituent manifolds $\R^{n_{i}} \xx
\mathcal{M}_{i}$. Following this, we recall a vital decomposition
of the first-order resolvent on the entire space $\mathcal{M}$ that
was introduced in \cite{hs2019}. This will form the foundation upon which
our proofs of both Theorem \ref{thm:Main} and \ref{thm:Main2} will be built.

\subsection{Higher-Order Resolvents on $\R^{n_{i}} \xx \mathcal{M}_{i}$}

Fix $1 \leq i \leq l$ and consider the
higher-order resolvents $(\Delta_{\R^{n_{i}} \xx \mathcal{M}_{i}} +
k^{2})^{-j}$ for $j \in \N^{*}$ and $k > 0$, where $\Delta_{\R^{n_{i}}
\xx \mathcal{M}_{i}}$ denotes the Laplacian on $\R^{n_{i}} \xx \mathcal{M}_{i}$.  Recall the definition
of the Bessel kernel $G_{a}^{d} : \R \rightarrow (0,\infty)$ for $a, \, d > 0$,
$$
G_{a}^{d}(s) := \frac{1}{(4 \pi)^{\frac{a}{2}} \Gamma(a/2)}
\int^{\infty}_{0} \frac{e^{- \frac{\pi s^{2}}{t} - \frac{t}{4
      \pi}}}{t^{1 + \frac{d - a}{2}}} \, dt.
$$
 The Bessel kernels are well-known to satisfy the estimates,
 \begin{align}\begin{split}
     \label{eqtn:BesselEstimates}
 G^{d}_{a}(s) &\simeq \left\lbrace \begin{array}{c c}
                                       \frac{e^{-c s}}{s^{d - a}} & if
                                                                  \ a
                                                                  < d,
                                       \\ & \\
                                       \max\br{1,\ln(s^{-1})} e^{-c s} &
                                                                       if
                                                                       \
                                                                       d
                                                                       =
                                                                         a,
                                                                         \\
                                       & \\
                                     e^{-c s} & if \ a > d, \end{array} \right.
                               \end{split}\end{align}
                             where the value of $c$ is allowed to
                             differ in each case and for the upper and lower estimates.
                             So, when $a < d$ for instance,
                             $$
\frac{e^{-c_{1} s}}{s^{d - a}} \lesssim G^{d}_{a}(s) \lesssim \frac{e^{-c_{2} s}}{s^{d - a}},
                            $$
for some $c_{1} > c_{2} > 0$. Refer to \cite{aronszajn1961bessel} for a detailed proof.

\begin{prop} 
 \label{prop:Bessel} 
For $j \in \N^{*}$, there exists $c_{1}, \, c_{2}, \, c_{3} > 0$ for which
 $$
(\Delta_{\R^{n_{i}} \xx \mathcal{M}_{i}} + k^{2})^{-j}(x,y) \lesssim
k^{n_{i} - 2j} \cdot G^{n_{i}}_{2j}(c_{1} d(x,y) k) + k^{N - 2 j} \cdot
 G^{N}_{2j} (c_{1} d(x,y) k),
 $$
 $$
(\Delta_{\R^{n_{i}} \xx \mathcal{M}_{i}} + k^{2})^{-j}(x,y) \gtrsim
k^{n_{i} - 2j} \cdot G^{n_{i}}_{2j}(c_{2} d(x,y) k) + k^{N - 2 j} \cdot
 G^{N}_{2j} (c_{2} d(x,y) k)
 $$
 and
 $$
\abs{\nabla_{x} (\Delta_{\R^{n_{i}} \xx \mathcal{M}_{i}} +
  k^{2})^{-j}(x,y)} \lesssim k^{n_{i} + 1 - 2 j} \cdot G^{n_{i}}_{2j -
1}(c_{3} d(x,y) k) + k^{N + 1 - 2 j} G^{N}_{2j - 1}(c_{3} d(x,y) k)
$$
 for all $x, \, y \in \R^{n_{i}} \xx \mathcal{M}_{i}$ and $k >
 0$.
 \end{prop}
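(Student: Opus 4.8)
The plan is to reduce the three estimates to the factorisation of the heat semigroup on the product $\R^{n_{i}} \xx \mathcal{M}_{i}$ together with the subordination identity $(\Delta + k^{2})^{-j} = \Gamma(j)^{-1}\int_{0}^{\infty} t^{j - 1} e^{-t k^{2}} e^{-t \Delta}\, dt$. Write a point of $\R^{n_{i}} \xx \mathcal{M}_{i}$ as $x = (x_{1}, x_{2})$, set $\rho := \abs{x_{1} - y_{1}}$ and $\sigma := d(x_{2},y_{2})$ (the distance on $\mathcal{M}_{i}$), so that $d(x,y)^{2} = \rho^{2} + \sigma^{2}$, and recall that the heat kernel splits, $p_{t}(x,y) = p^{\R^{n_{i}}}_{t}(x_{1},y_{1})\, p^{\mathcal{M}_{i}}_{t}(x_{2},y_{2})$, with $p^{\R^{n_{i}}}_{t}(x_{1},y_{1}) = (4 \pi t)^{-n_{i}/2} e^{-\rho^{2}/4t}$ exactly, and $p^{\mathcal{M}_{i}}_{t}(x_{2},y_{2}) \simeq \min(t,1)^{-m_{i}/2} e^{-\sigma^{2}/ct}$ on the compact factor (with $m_{i} := N - n_{i}$; this is the standard two-sided Gaussian bound for small $t$, and the exponentially fast convergence of $p^{\mathcal{M}_{i}}_{t}$ to $\mathrm{Vol}(\mathcal{M}_{i})^{-1}$ for large $t$). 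The second ingredient is the elementary identity, valid for every $\beta > 0$,
$$
\int_{0}^{\infty} t^{\beta - 1 - d/2} e^{-t k^{2} - r^{2}/4t}\, dt = C_{d,\beta}\, k^{d - 2 \beta}\, G^{d}_{2 \beta}(k r),
$$
which is just the subordination representation of the Euclidean resolvent kernel $(\Delta_{\R^{d}} + k^{2})^{-\beta}(x,y) = k^{d - 2\beta} G^{d}_{2\beta}(k\abs{x-y})$, the last equality being immediate from the definition of $G^{d}_{a}$ and scaling; crucially $\beta$ is allowed to be a half-integer.

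For the two resolvent bounds I would insert the factorised kernel into the subordination integral. Writing $\min(t,1)^{-m_{i}/2} = \max(1, t^{-m_{i}/2}) \leq 1 + t^{-m_{i}/2}$ and using $\rho^{2}/4 + \sigma^{2}/c \simeq d(x,y)^{2}$, the upper bound for $p^{\mathcal{M}_{i}}_{t}$ produces exactly two terms, with exponents $t^{j - 1 - n_{i}/2}$ and $t^{j - 1 - N/2}$ (since $n_{i}+m_{i}=N$); by the identity above these integrate to $k^{n_{i}-2j}G^{n_{i}}_{2j}(c_{1} d(x,y) k)$ and $k^{N-2j}G^{N}_{2j}(c_{1} d(x,y) k)$ respectively, giving the first estimate. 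Conversely, since $\max(1, t^{-m_{i}/2})$ dominates both $1$ and $t^{-m_{i}/2}$ separately, the lower heat kernel bound shows that $(\Delta + k^{2})^{-j}(x,y)$ is bounded below by the maximum, hence by a fixed multiple of the sum, of the same two integrals (now with constants $c_{2}$), which is the second estimate.

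For the gradient estimate the key remark is that on each factor the Gaussian bounds are stable under rescaling of time: $q_{t}(a,b) \lesssim q_{Ct}(a,b)$ and $\abs{\nabla_{a} q_{t}(a,b)} \lesssim t^{-1/2} q_{Ct}(a,b)$ for a suitable $C \geq 1$ and all $t > 0$ (on $\R^{n_{i}}$ by explicit computation; on $\mathcal{M}_{i}$ by the Li--Yau gradient estimate for small $t$ and the spectral gap $\mu_{1}>0$ for large $t$, with $C$ taken large enough to swallow the discrepancy between the Gaussian constants in the upper and lower heat kernel bounds). Since $\abs{\nabla_{x} p_{t}(x,y)} \leq \abs{\nabla_{x_{1}} p_{t}(x,y)} + \abs{\nabla_{x_{2}} p_{t}(x,y)}$ and each summand is $\lesssim t^{-1/2}p_{Ct}(x,y)$, the subordination formula gives
$$
\abs{\nabla_{x}(\Delta + k^{2})^{-j}(x,y)} \lesssim \int_{0}^{\infty} t^{j - 3/2} e^{-t k^{2}} p_{Ct}(x,y)\, dt \simeq (\Delta + k^{2}/C)^{-(j - 1/2)}(x,y)
$$
after the substitution $u = Ct$. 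Applying the resolvent upper bound just established, now at the half-integer order $\beta = j - 1/2$ and with $k^{2}$ replaced by $k^{2}/C$, and absorbing the resulting powers of $C$, yields the third estimate $\lesssim k^{n_{i}+1-2j}G^{n_{i}}_{2j-1}(c_{3} d(x,y) k) + k^{N+1-2j}G^{N}_{2j-1}(c_{3} d(x,y) k)$.

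The part requiring the most care is the set-up rather than the estimates: one must record the heat kernel and heat-kernel-gradient bounds on the compact factor $\mathcal{M}_{i}$ in the precise form that is \emph{uniform over all $t \in (0,\infty)$} and \emph{stable under the rescaling $t \mapsto Ct$} — this is where the spectral gap enters and where $C$ is forced to be large — and one must check, as is the case, that the derivation of the resolvent upper bound nowhere uses integrality of $j$, so that it may be invoked with $\beta = j - 1/2$. Modulo these points everything reduces to routine manipulations of the subordination integral together with $\rho^{2}+\sigma^{2}=d(x,y)^{2}$.
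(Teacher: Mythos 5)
Your proof is correct and follows essentially the same route as the paper: the subordination formula $(\Delta+k^2)^{-j}=\Gamma(j)^{-1}\int_0^\infty t^{j-1}e^{-tk^2}e^{-t\Delta}\,dt$, two-sided Gaussian bounds of the form $(t^{-n_i/2}+t^{-N/2})e^{-cd(x,y)^2/t}$ for the heat kernel on the product, and a change of variables identifying the resulting integrals with Bessel kernels $G^d_{a}$. Your two variations — deriving the heat-kernel bounds directly from the tensor factorisation $p_t=p_t^{\R^{n_i}}\cdot p_t^{\mathcal{M}_i}$ rather than quoting \cite[(11)--(13)]{hs2019}, and phrasing the gradient estimate via the pointwise domination $\abs{\nabla_x p_t}\lesssim t^{-1/2}p_{Ct}$ so that it reduces to the already-proved upper bound at the half-integer order $\beta=j-\tfrac12$ — are both valid and make the argument self-contained, but they amount to the same underlying integral computation the paper performs by substituting the gradient heat-kernel bound directly.
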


\begin{proof}  
 Recall that the higher-order resolvent can be expressed in terms of
 the heat operator through the integral relation
 \begin{equation}
   \label{eqtn:ResolventFormula}
   (\Delta_{\R^{n_{i}} \xx \mathcal{M}_{i}} + k^{2})^{-j} =
   \frac{1}{(j - 1)!}
   \int^{\infty}_{0} t^{j - 1} e^{-t k^{2}} e^{-t \Delta_{\R^{n_{i}}
       \xx \mathcal{M}_{i}}} \, dt.
 \end{equation}
 On applying the heat kernel estimate (11) of \cite{hs2019},
 \begin{align*}\begin{split}  
 (\Delta_{\R^{n_{i}} \xx \mathcal{M}_{i}} + k^{2})^{-j}(x,y) &=
 \frac{1}{(j - 1)!}
 \int_{0}^{\infty} t^{j - 1} e^{-t k^{2}} e^{-t \Delta_{\R^{n_{i}} \xx
     \mathcal{M}_{i}}}(x,y) \, dt \\
 &\lesssim \int_{0}^{\infty} t^{j - 1} (t^{-n_{i}/2} + t^{-N/2}) e^{-t
   k^{2}} e^{-c \frac{d(x,y)^{2}}{t}} \, dt \\
 &= \int_{0}^{\infty} t^{-1 - \frac{n_{i} - 2j}{2}} e^{-t k^{2}} e^{-c
 \frac{d(x,y)^{2}}{t}} \, dt + \int_{0}^{\infty} t^{-1 - \frac{N - 2j}{2}} e^{-t k^{2}} e^{-c
 \frac{d(x,y)^{2}}{t}}.
\end{split}\end{align*}
On applying a change of variables,
\begin{align}\begin{split}
    \label{eqtn:Bessel}
  (\Delta_{\R^{n_{i}} \xx \mathcal{M}_{i}} + k^{2})^{-j}(x,y) &\lesssim
  k^{n_{i} - 2j} \int^{\infty}_{0} t^{-1 - \frac{n_{i} - 2j}{2}}
  e^{-\frac{t}{4 \pi}} e^{- \pi \frac{4 c d(x,y)^{2}k^{2}}{t}} \, dt \\
  & \qquad \qquad + k^{N - 2 j} \int^{\infty}_{0} t^{-1 - \frac{N - 2
      j}{2}} e^{-\frac{t}{4 \pi}} e^{- \pi \frac{4 c d(x,y)^{2}
      k^{2}}{t}} \, dt \\
  &\simeq k^{n_{i} - 2j} \cdot G^{n_{i}}_{2j}(2 \sqrt{c} d(x,y) k) + k^{N - 2 j}
 \cdot G^{N}_{2j} (2 \sqrt{c} d(x,y) k).
\end{split}\end{align}
The lower estimate for $(\Delta_{\R^{n_{i}} \xx \mathcal{M}_{i}} +
k^{2})^{-j}(x,y)$ and the upper estimate for $\nabla_{x}
(\Delta_{\R^{n_{i}} \xx \mathcal{M}_{i}} + k^{2})^{-j}(x,y)$ follow in an identical manner to the upper estimate
through an application of (13) and (12) respectively of \cite{hs2019}.
 \end{proof}

 Proposition \ref{prop:Bessel}, when combined with the Bessel kernel
 estimates \eqref{eqtn:BesselEstimates}, immediately
 yield the following corollary.

\begin{cor} 
 \label{cor:EndResolvent} 
 For any $j \in \N^{*}$ with $j \neq \frac{n_{i}}{2}$ and $j \neq
 \frac{N}{2}$, there exists $c > 0$ such that
 \begin{align}\begin{split}  
     \label{eqtn:cor:EndResolvent1}
     \br{\Delta_{\R^{n_{i}} \xx \mathcal{M}_{i}} + k ^{2}}^{-j}(x,y)
 &\lesssim \left( d(x,y)^{\min(2j - N, 0)} k^{-\max(2j - N, 0)}  \right. \\
 & \qquad \qquad \qquad  \left. +
  d(x,y)^{\min(2j - n_{i}, 0)} k^{- \max(2j - n_{i}, 0)} \right) e^{-c k d(x,y)}.
\end{split}\end{align}
If $j = \frac{n_{i}}{2} < \frac{N}{2}$,
\begin{equation} 
 \label{eqtn:cor:EndResolvent3} 
 (\Delta_{\R^{n_{i}} \xx \mathcal{M}_{i}} + k^{2})^{-j}(x,y) \lesssim
 \br{d(x,y)^{2j - N} + \max \brs{1, \ln\br{\frac{1}{k d(x,y)}}}}
 e^{-c k d(x,y)}.
 \end{equation}
 If $j = \frac{n_{i}}{2} = \frac{N}{2}$,
 \begin{equation} 
 \label{eqtn:cor:EndResolvent2} 
 (\Delta_{\R^{n_{i}} \xx \mathcal{M}_{i}} + k^{2})^{-j}(x,y) \lesssim \max \brs{1, \ln\br{\frac{1}{k d(x,y)}}}
 e^{-c k d(x,y)}.
 \end{equation}
 Finally, if $\frac{n_{i}}{2} < j = \frac{N}{2}$,
  \begin{equation} 
 \label{eqtn:cor:EndResolvent4} 
 (\Delta_{\R^{n_{i}} \xx \mathcal{M}_{i}} + k^{2})^{-j}(x,y) \lesssim
 \br{ \max \brs{1, \ln\br{\frac{1}{k d(x,y)}}} + k^{n_{i} - 2 j}}
 e^{-c k d(x,y)}.
 \end{equation}
\end{cor}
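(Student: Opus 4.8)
The plan is to read off all four estimates directly from the upper bound of Proposition \ref{prop:Bessel} by substituting into the Bessel kernel asymptotics \eqref{eqtn:BesselEstimates}, distinguishing cases according to the signs of $2j - n_i$ and $2j - N$. The one computation that is needed is this: for any $d > 0$ with $2j \neq d$, putting $s = c_1 d(x,y) k$ in \eqref{eqtn:BesselEstimates} and renaming the constant in the exponent,
$$
k^{d - 2j} \, G^{d}_{2j}(c_1 d(x,y) k) \lesssim d(x,y)^{\min(2j - d, 0)} \, k^{-\max(2j - d, 0)} \, e^{-c \, k \, d(x,y)}.
$$
Indeed, when $2j < d$ one has $G^{d}_{2j}(s) \lesssim s^{-(d - 2j)} e^{-cs}$, so the powers of $k$ cancel and the prefactor $d(x,y)^{2j - d}$ remains; when $2j > d$ one has $G^{d}_{2j}(s) \lesssim e^{-cs}$, so only $k^{d - 2j}$ survives. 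Taking $d = n_i$ and $d = N$ and adding the two contributions produces exactly \eqref{eqtn:cor:EndResolvent1}, which is therefore valid whenever $j \neq n_i/2$ and $j \neq N/2$.

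For the three borderline estimates one of the exponents $2j - n_i$, $2j - N$ (or both) vanishes, and for that summand one must instead use the middle line of \eqref{eqtn:BesselEstimates}: when $2j = d$,
$$
k^{d - 2j} \, G^{d}_{2j}(c_1 d(x,y) k) \lesssim \max\brs{1, \ln\br{(c_1 d(x,y) k)^{-1}}} e^{-c \, k \, d(x,y)} \lesssim \max\brs{1, \ln\br{(d(x,y) k)^{-1}}} e^{-c \, k \, d(x,y)},
$$
the last step absorbing the additive constant $\ln c_1^{-1}$ into $\max\brs{1, \cdot}$ at the cost of a multiplicative constant. Since $n_i \leq N$, the borderline possibilities are $2j = n_i < N$, $2j = n_i = N$ and $n_i < 2j = N$; in the first case \eqref{eqtn:cor:EndResolvent3} is the sum of the logarithmic term from $d = n_i$ and the power term $d(x,y)^{2j - N} e^{-c k d(x,y)}$ from $d = N$, in the second case \eqref{eqtn:cor:EndResolvent2} is the sum of two equal logarithmic terms, and in the third case \eqref{eqtn:cor:EndResolvent4} is the sum of the logarithmic term from $d = N$ and the term $k^{n_i - 2j} e^{-c k d(x,y)}$ from $d = n_i$.

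No real obstacle arises: the analytic substance is entirely contained in Proposition \ref{prop:Bessel} and in the classical asymptotics \eqref{eqtn:BesselEstimates}, and what remains is bookkeeping in the parameters $2j$, $n_i$ and $N$. The only point meriting a little care is the disposal of the multiplicative constant $c_1$ inside the Bessel kernels — harmlessly absorbed by $\lesssim$ for the power-type bounds, and handled for the logarithmic bound via $\ln((c_1 t)^{-1}) = \ln c_1^{-1} + \ln t^{-1}$ together with $\max\brs{1, \cdot} \geq 1$.
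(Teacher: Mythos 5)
Your proposal is correct and follows exactly the route the paper intends: the text preceding the corollary states that it follows "immediately" from Proposition \ref{prop:Bessel} combined with the Bessel asymptotics \eqref{eqtn:BesselEstimates}, and your case-by-case bookkeeping in $2j$, $n_i$, $N$ — including the careful absorption of the inner constant $c_1$ into the $\max[1,\cdot]$ in the logarithmic borderline cases — is precisely the computation the authors are leaving to the reader.
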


Similarly, for the operators $\nabla(\Delta_{\R^{n_{i}} \xx
  \mathcal{M}_{i}} + k^{2})^{-j}$, the following estimates follow from Proposition \ref{prop:Bessel}.

\begin{cor} 
 \label{cor:GradEndResolvent} 
 For any $j \in \N^{*}$ with $j \neq \frac{n_{i} + 1}{2}$ and $j \neq
 \frac{N + 1}{2}$, there exists $c > 0$ such that
 \begin{align}\begin{split}  
     \label{eqtn:cor:GradEndResolvent1}
\abs{\nabla_{x}     \br{\Delta_{\R^{n_{i}} \xx \mathcal{M}_{i}} + k ^{2}}^{-j}(x,y)}
 &\lesssim \left( d(x,y)^{\min(2j - 1 - N, 0)} k^{-\max(2j - 1 - N, 0)}  \right. \\
 & \qquad \qquad \qquad  \left. +
  d(x,y)^{\min(2j - 1 - n_{i}, 0)} k^{- \max(2j - 1 - n_{i}, 0)} \right) e^{-c k d(x,y)}.
\end{split}\end{align}
If $j = \frac{n_{i} + 1}{2} < \frac{N + 1}{2}$,
\begin{equation} 
 \label{eqtn:cor:GradEndResolvent2} 
\abs{\nabla_{x} (\Delta_{\R^{n_{i}} \xx \mathcal{M}_{i}} + k^{2})^{-j}(x,y)} \lesssim
 \br{d(x,y)^{2j - 1 - N} +  \max \brs{1, \ln \br{\frac{1}{k d(x,y)}}}  }
 e^{-c k d(x,y)}.
 \end{equation}
 If $j = \frac{n_{i} + 1}{2} = \frac{N + 1}{2}$,
 \begin{equation} 
 \label{eqtn:cor:GradEndResolvent3} 
 \abs{ \nabla_{x} (\Delta_{\R^{n_{i}} \xx \mathcal{M}_{i}} + k^{2})^{-j}(x,y)} \lesssim
 \max \brs{1, \ln \br{\frac{1}{k d(x,y)}}}  e^{-c k d(x,y)}.
 \end{equation}
 Finally, if $\frac{n_{i} + 1}{2} < j = \frac{N + 1}{2}$,
  \begin{equation} 
 \label{eqtn:cor:GradEndResolvent4} 
 \abs{\nabla_{x} (\Delta_{\R^{n_{i}} \xx \mathcal{M}_{i}} + k^{2})^{-j}(x,y)} \lesssim
 \br{  \max \brs{1, \ln \br{\frac{1}{k d(x,y)}}} + k^{ n_{i} + 1 - 2 j}}
 e^{-c k d(x,y)}.
 \end{equation}
\end{cor}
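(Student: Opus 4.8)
The plan is to read Corollary \ref{cor:GradEndResolvent} off directly from the pointwise gradient bound in Proposition \ref{prop:Bessel} by substituting the Bessel kernel asymptotics \eqref{eqtn:BesselEstimates}. Concretely, I would start from
$$
\abs{\nabla_{x}(\Delta_{\R^{n_{i}} \xx \mathcal{M}_{i}}+k^{2})^{-j}(x,y)} \lesssim k^{n_{i}+1-2j}\,G^{n_{i}}_{2j-1}(c_{3}d(x,y)k) + k^{N+1-2j}\,G^{N}_{2j-1}(c_{3}d(x,y)k),
$$
and analyse each of the two summands separately, applying \eqref{eqtn:BesselEstimates} with $a=2j-1$ and $d=n_{i}$ (respectively $d=N$). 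For a fixed such $d$ this splits into the three regimes $2j-1<d$, $2j-1=d$ and $2j-1>d$; combining the possibilities for $d=n_{i}$ and for $d=N$ and recording whether $n_{i}$ or $N$ equals $2j-1$ produces exactly the four displayed cases of the Corollary (noting $n_{i}\le N$, so that $2j-1=n_{i}$ forces $2j-1\le N$, etc.).

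The core computation, carried out for $d\in\{n_{i},N\}$, is as follows. When $2j-1<d$ we have $G^{d}_{2j-1}(s)\simeq e^{-cs}s^{-(d-2j+1)}$, and with $s=c_{3}d(x,y)k$ the prefactor $k^{d+1-2j}$ is cancelled exactly by the $k$-power coming from $s^{-(d-2j+1)}$, leaving $d(x,y)^{2j-1-d}e^{-ckd(x,y)}$; since $2j-1-d<0$ this equals $d(x,y)^{\min(2j-1-d,0)}k^{-\max(2j-1-d,0)}e^{-ckd(x,y)}$. When $2j-1>d$ the Bessel kernel is merely $\simeq e^{-cs}$, so the summand is $k^{d+1-2j}e^{-ckd(x,y)}$, which since $d+1-2j<0$ is again $d(x,y)^{\min(2j-1-d,0)}k^{-\max(2j-1-d,0)}e^{-ckd(x,y)}$. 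When $2j-1=d$ the prefactor is $k^{0}=1$ and the kernel contributes the logarithmic factor $\max(1,\ln(1/s))$. Adding the $n_{i}$-contribution to the $N$-contribution then yields \eqref{eqtn:cor:GradEndResolvent1} in the generic case $2j-1\notin\{n_{i},N\}$, \eqref{eqtn:cor:GradEndResolvent2} when $2j-1=n_{i}<N$, \eqref{eqtn:cor:GradEndResolvent3} when $2j-1=n_{i}=N$, and \eqref{eqtn:cor:GradEndResolvent4} when $n_{i}<2j-1=N$.

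The only point that requires a little care — and the closest thing to an obstacle — is the bookkeeping around the logarithmic factor $\max(1,\ln(1/(c_{3}d(x,y)k)))$ in the borderline cases: one must absorb the constant $c_{3}$ inside the logarithm at the cost of slightly worsening the exponential rate. This is harmless, since for $d(x,y)k\lesssim 1$ one has $\ln(1/(c_{3}d(x,y)k))\lesssim 1+\ln(1/(d(x,y)k))$, while for $d(x,y)k\gtrsim 1$ the logarithm is bounded and dominated by $e^{-c'kd(x,y)}$ after decreasing the constant; one likewise replaces the (finitely many) exponential rates arising from the two summands and from the upper estimate in \eqref{eqtn:BesselEstimates} by their minimum. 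With these cosmetic adjustments all four estimates follow at once, so the corollary is genuinely a bookkeeping consequence of Proposition \ref{prop:Bessel} and no new analytic input is needed.
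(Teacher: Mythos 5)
Your proof is correct and follows exactly the route the paper intends: the paper itself gives no explicit argument but simply states that Corollary \ref{cor:GradEndResolvent} ``follows from Proposition \ref{prop:Bessel}'', and your case analysis by substituting \eqref{eqtn:BesselEstimates} with $a=2j-1$ and $d\in\{n_i,N\}$ (together with the routine absorption of the constant $c_3$ into the logarithm and the exponential rate) is precisely the bookkeeping that justifies that claim.
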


 The below proposition follows almost immediately from the two
 previous corollaries. We provide an alternative proof, that follows
 directly from the first-order resolvent estimates, for the first
 estimate in the statement.

\begin{prop} 
 \label{prop:EndResolvent} 
Let $j \in \N^{*}$. There exists $c > 0$ such that
 \begin{equation}
   \label{eqtn:EndResolvent1}
   (\Delta_{\R^{n_{i}} \xx \mathcal{M}_{i}} + k^{2})^{-j}(x,y)
   \lesssim k^{-2(j - 1)} \br{d(x,y)^{2 - N} + d(x,y)^{2 - n_{i}}}
   e^{-c k d(x,y)}
 \end{equation}
 and
 \begin{equation}
   \label{eqtn:EndResolvent2}
\abs{\nabla_{x} \br{\Delta_{\R^{n_{i}} \xx \mathcal{M}_{i}} +
    k^{2}}^{-j}(x,y)} \lesssim k^{-2 (j - 1)} \br{d(x,y)^{1 - N} + d(x,y)^{1 -
  n_{i}}} e^{-c k d(x,y)} 
\end{equation}
for all $x, \, y \in \R^{n_{i}} \xx \mathcal{M}_{i}$ and $k > 0$.
\end{prop}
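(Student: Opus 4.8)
The plan is to deduce Proposition \ref{prop:EndResolvent} directly from the first-order resolvent estimates via the composition identity
\begin{equation*}
(\Delta_{\R^{n_{i}} \xx \mathcal{M}_{i}} + k^{2})^{-j}(x,y) = \int_{\R^{n_{i}} \xx \mathcal{M}_{i}} (\Delta_{\R^{n_{i}} \xx \mathcal{M}_{i}} + k^{2})^{-(j-1)}(x,z)\, (\Delta_{\R^{n_{i}} \xx \mathcal{M}_{i}} + k^{2})^{-1}(z,y)\, dz,
\end{equation*}
iterated $j-1$ times. Using the $j=1$ case of Proposition \ref{prop:Bessel} together with the Bessel kernel estimates \eqref{eqtn:BesselEstimates}, one has the basic bound $(\Delta + k^{2})^{-1}(z,y) \lesssim (d(z,y)^{2-n_{i}} + d(z,y)^{2-N}) e^{-c k d(z,y)}$ (valid since $n_{i}, N \geq 3$ is not assumed, but for small dimensions one uses the logarithmic/bounded cases of \eqref{eqtn:BesselEstimates}, which are dominated by the stated power as $d \to 0$ up to adjusting $c$; the case $n_i \le 2$ is excluded from the paper's main results anyway). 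Since the claimed exponent $-2(j-1)$ in $k$ exactly matches the product of $j-1$ factors of $k^{-2}$ coming from the scaling $k^{n_i - 2}\cdot (kd)^{2-n_i} = k^{-2} d^{2-n_i}$ outside a fixed ball and the contribution inside, the result should fall out of an induction on $j$.

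First I would establish the base case $j=1$ by reading off Proposition \ref{prop:Bessel} and \eqref{eqtn:BesselEstimates}: for $j=1$ the factor $k^{n_i - 2j}G^{n_i}_{2j}(c d(x,y)k)$ behaves like $k^{n_i-2}(d(x,y)k)^{2-n_i}e^{-ckd(x,y)} = k^{-2\cdot 0}\cdot d(x,y)^{2-n_i} e^{-ckd(x,y)}$ when $n_i > 2$, plus the analogous $N$-term; the borderline dimensions are absorbed into the exponential. Then for the inductive step I would assume \eqref{eqtn:EndResolvent1} for $j-1$ and plug both it and the $j=1$ bound into the composition identity. The integral over $z$ splits according to whether $z$ is closer to $x$ or to $y$, and in each region one uses the triangle inequality $d(x,y) \le d(x,z) + d(z,y)$ to factor out a single exponential $e^{-c' k d(x,y)}$ (with a slightly smaller $c'$), leaving a convolution-type integral of two pure power-weights $d(x,z)^{2-n_i}+d(x,z)^{2-N}$ and $d(z,y)^{2-n_i}+d(z,y)^{2-N}$ against the volume element, which on a manifold with the stated volume growth $V(B(x,r)) \simeq \min(r^N, r^{n_i})$ is finite and yields the correct power of $d(x,y)$ and the extra $k^{-2}$; for the gradient estimate \eqref{eqtn:EndResolvent2} one simply moves $\nabla_x$ onto the first factor and uses the gradient estimate from Proposition \ref{prop:Bessel} (or Corollary \ref{cor:GradEndResolvent}) in place of the pointwise one, everything else being identical. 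Alternatively, and more cleanly, one can bypass the induction entirely by invoking Corollaries \ref{cor:EndResolvent} and \ref{cor:GradEndResolvent} and noting that each of the case-by-case bounds there is dominated, for $d(x,y) k$ in any range, by $k^{-2(j-1)}(d(x,y)^{2-N} + d(x,y)^{2-n_i})e^{-ckd(x,y)}$: indeed $d^{\min(2j-N,0)}k^{-\max(2j-N,0)} \lesssim k^{-2(j-1)} d^{2-N}(d k)^{\text{nonneg power}} e^{+\epsilon k d}$ and one absorbs the polynomial factor $(kd)^{\alpha}$, $\alpha \ge 0$, into the exponential at the cost of shrinking $c$; the logarithmic borderline cases are handled the same way since $\max(1,\ln(1/(kd)))e^{-ckd} \lesssim (kd)^{-\delta} e^{-ckd} \lesssim e^{-c'kd}$ for small $\delta$, and then $e^{-c'kd} \lesssim (kd)^{2-n_i} e^{-c''kd}$ when $n_i \geq 2$ (or is trivially $\lesssim 1 \le d^{2-n_i}$ for $d$ small when $n_i < 2$, paired with decay for $d$ large).

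The main obstacle, such as it is, is purely bookkeeping: one must verify that in \emph{every} one of the eight cases of Corollaries \ref{cor:EndResolvent}–\ref{cor:GradEndResolvent} (the generic case plus the three resonant cases, for both the resolvent and its gradient) the case-specific bound is genuinely dominated by the single clean expression on the right-hand side of \eqref{eqtn:EndResolvent1}–\eqref{eqtn:EndResolvent2}, uniformly in $k > 0$ and $x,y$. The potential trap is the resonant case $j = N/2$ with $n_i < N$, where a factor $k^{n_i - 2j} = k^{n_i - N}$ appears and blows up as $k \to 0$: here I would check that this term is accompanied by the exponential and by no positive power of $d$, so that writing $k^{n_i-N} = k^{-2(j-1)}\cdot k^{n_i - N + 2(j-1)} = k^{-2(j-1)} k^{n_i - 2}$ and pairing $k^{n_i-2}$ with $d^{2-n_i}$ via $k^{n_i-2} e^{-ckd} \lesssim d^{2-n_i}(kd)^{n_i-2}e^{-ckd} \lesssim d^{2-n_i} e^{-c'kd}$ for $n_i \ge 2$ recovers exactly the claimed form. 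With that verification done the proof is complete; since the paper explicitly says it will only give the alternative first-order-resolvent proof of \eqref{eqtn:EndResolvent1}, I would present the composition-identity argument for that estimate and leave \eqref{eqtn:EndResolvent2} and the remaining cases to the reader as ``following in an identical manner,'' consistent with the style already used in the proof of Proposition \ref{prop:Bessel}.
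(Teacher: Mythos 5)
Your proposal is correct, but it uses a genuinely different route from the paper's actual proof. The paper's proof of \eqref{eqtn:EndResolvent1} is a one-line reduction via the heat-kernel integral formula
\[
(\Delta_{\R^{n_i}\xx\mathcal{M}_i}+k^2)^{-j}=\tfrac{1}{(j-1)!}\int_0^\infty t^{j-1}e^{-tk^2}e^{-t\Delta_{\R^{n_i}\xx\mathcal{M}_i}}\,dt,
\]
writing $t^{j-1}e^{-tk^2}=k^{-2(j-1)}(tk^2)^{j-1}e^{-tk^2}\lesssim k^{-2(j-1)}e^{-tk^2(1-\epsilon)}$ and thereby reducing directly to the \emph{first-order} resolvent $(\Delta_{\R^{n_i}\xx\mathcal{M}_i}+k^2(1-\epsilon))^{-1}$, whose kernel bounds are already available from \cite{hs2019}. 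This extracts the $k^{-2(j-1)}$ factor in one stroke and bypasses both your routes. By contrast, your main route (the iterated composition identity) hinges on a convolution estimate of the form $\int g(x,z)g(z,y)\,dz\lesssim k^{-2}g(x,y)$ for $g(x,y)=(d(x,y)^{2-N}+d(x,y)^{2-n_i})e^{-ckd(x,y)}$, which you assert "is finite and yields the correct power" without verifying it; it is true, but the verification is not entirely trivial — it requires splitting into the regions where $z$ is near $x$, near $y$, or far from both, using the exponential cutoff at scale $1/k$ to produce the $k^{-2}$, and handling the transition between the $r^N$ and $r^{n_i}$ volume regimes (and the borderline $N=4$ logarithm) carefully. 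Your alternative route (deriving the estimate from Corollaries~\ref{cor:EndResolvent} and~\ref{cor:GradEndResolvent} by absorbing powers and logarithms of $kd(x,y)$ into the exponential) is exactly what the paper refers to when it says the proposition "follows almost immediately from the two previous corollaries," so that part of your proposal is fully aligned with the paper's intent — it just isn't the proof the paper chose to write out. The paper's chosen argument buys a cleaner derivation with no case analysis and no convolution lemma, at the modest cost of relying on the exact form of the known first-order kernel bounds.
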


\begin{proof}  
  For any $k > 0$, the higher-order resolvent operator is given by the formula
  $$
(\Delta_{\R^{n_{i}} \xx \mathcal{M}_{i}} + k^{2})^{-j} = \frac{1}{(j -
  1)!}
\int^{\infty}_{0} t^{j - 1} e^{- t k^{2}} e^{-t \Delta_{\R^{n_{i}} \xx
  \mathcal{M}_{i}}} \, dt.
$$
Let $e^{-t \Delta_{\R^{n_{i}} \xx \mathcal{M}_{i}}}(x,y)$ denote the
heat kernel on $\R^{n_{i}} \xx \mathcal{M}_{i}$. Since $x^{j - 1} \lesssim e^{\epsilon x}$ for any $0 < \epsilon < 1$,
\begin{align*}\begin{split}  
 (\Delta_{\R^{n_{i}} \xx \mathcal{M}_{i}} + k^{2})^{-j}(x,y)
 &\simeq k^{-2(j - 1)} \int^{\infty}_{0} (t k^{2})^{(j - 1)} e^{-t
   k^{2}} e^{-t \Delta_{\R^{n_{i}} \xx \mathcal{M}_{i}}}(x,y) \, dt \\
 &\lesssim k^{-2(j - 1)}
 \int^{\infty}_{0} e^{-t k^{2}(1 - \epsilon)} e^{-t \Delta_{\R^{n_{i}}
     \xx \mathcal{M}_{i}}}(x,y) \, dt \\
 &= k^{-2 (j - 1)} (\Delta_{\R^{n_{i}} \xx \mathcal{M}_{i}} + k^{2}(1 - \epsilon))^{-1}(x,y).
\end{split}\end{align*}
On applying the known estimates for the first-order resolvent,
equation (17) from \cite{hs2019}, we obtain
$$
 (\Delta_{\R^{n_{i}} \xx \mathcal{M}_{i}} + k^{2})^{-j}(x,y) \lesssim
 k^{-2(j - 1)} \br{d(x,y)^{2 - N} + d(x,y)^{2 - n_{i}}} e^{- c' \sqrt{1
   - \epsilon} k d(x,y)},
$$
for some $c' > 0$. This proves \eqref{eqtn:EndResolvent1} with $c =
\sqrt{1 - \epsilon} c'$.
\end{proof}

Finally, when we come to consider the unboundedness of the square
function $S$ for $p \geq n_{min}$, the following lower bounds will
prove useful. This lower bound follows directly from Proposition
\ref{prop:Bessel} and the lower bound for the Bessel kernel given in \eqref{eqtn:BesselEstimates}.

\begin{cor}
\label{cor:LowerEndResolvent}
For $j < \frac{n_{i}}{2}$, there must exist $c > 0$ such that
$$
\br{\Delta_{\R^{n_{i}} \xx \mathcal{M}_{i}} + k^{2}}^{-j}(x,y) \gtrsim
\br{d(x,y)^{2 j - N} + d(x,y)^{2 j - n_{i}}} e^{-c k d(x,y)},
$$
for all $x, \, y \in \R^{n_{i}} \xx \mathcal{M}_{i}$ and $k > 0$.
\end{cor}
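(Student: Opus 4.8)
The plan is to obtain the estimate directly from the lower bound furnished by Proposition \ref{prop:Bessel} together with the lower half of the Bessel kernel estimates \eqref{eqtn:BesselEstimates}. The first step is to observe that, since $\mathcal{M}_{i}$ is a compact manifold and $n_{i} + \mathrm{dim}\,\mathcal{M}_{i} = N$, we have $n_{i} \leq N$; hence the hypothesis $j < \frac{n_{i}}{2}$ forces both $2j < n_{i}$ and $2j < N$. In particular, neither of the Bessel kernels $G^{n_{i}}_{2j}$ and $G^{N}_{2j}$ appearing in the lower bound of Proposition \ref{prop:Bessel} meets the logarithmic threshold, and we are squarely in the regime $a < d$ of \eqref{eqtn:BesselEstimates} for both.

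Next I would apply the lower Bessel estimate in that regime: there is a constant $c_{1} > 0$ with $G^{n_{i}}_{2j}(s) \gtrsim s^{2j - n_{i}} e^{-c_{1} s}$ and $G^{N}_{2j}(s) \gtrsim s^{2j - N} e^{-c_{1} s}$ for all $s > 0$ (taking $c_{1}$ to be the larger of the two constants produced by \eqref{eqtn:BesselEstimates}). Substituting $s = c_{2}\, d(x,y) k$, with $c_{2}$ the constant from the lower bound in Proposition \ref{prop:Bessel}, and multiplying through by the prefactors $k^{n_{i} - 2j}$ and $k^{N - 2j}$, the powers of $k$ cancel against the powers of $(c_{2}\, d(x,y) k)$, leaving
$$
k^{n_{i} - 2j}\, G^{n_{i}}_{2j}(c_{2}\, d(x,y) k) \gtrsim d(x,y)^{2j - n_{i}}\, e^{-c_{1} c_{2}\, d(x,y) k}
$$
and, in the same way, $k^{N - 2j}\, G^{N}_{2j}(c_{2}\, d(x,y) k) \gtrsim d(x,y)^{2j - N}\, e^{-c_{1} c_{2}\, d(x,y) k}$. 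Adding these two inequalities and feeding the result into the lower bound of Proposition \ref{prop:Bessel} gives the claim with $c = c_{1} c_{2}$.

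There is no real obstacle here; the argument is a one-line substitution once the threshold condition $2j < n_{i} \leq N$ has been recorded. The only minor bookkeeping is that the exponential constant $c_{2}$ sitting inside the argument of the Bessel kernel in Proposition \ref{prop:Bessel} is a priori different from the constant $c_{1}$ coming out of \eqref{eqtn:BesselEstimates}, and these are simply merged into a single $c$; should the two resulting exponential factors carry distinct constants, one replaces both by the smaller, which is harmless since $d(x,y), k > 0$.
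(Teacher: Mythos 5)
Your proposal is correct and follows exactly the one-line justification the paper itself gives before the corollary statement, namely to combine the lower bound in Proposition \ref{prop:Bessel} with the lower half of the Bessel estimates \eqref{eqtn:BesselEstimates} in the regime $a<d$. Since $n_i\le N$ and $2j<n_i$ place both $G^{n_i}_{2j}$ and $G^{N}_{2j}$ in that regime, your substitution and cancellation of the powers of $k$ reproduce the stated bound, and your handling of the exponential constants is appropriate.
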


\subsection{A Decomposition for the Resolvent}
\label{subsec:Decomposition}

   Recall from \cite{hs2019} that in order to prove the boundedness of
 the low energy part of the Riesz transforms operator, the resolvent
 was separated into four separate components. That is, for $0 < k \leq
 1$ the resolvent is given by
 \begin{equation}
   \label{eqtn:ResolventSplitting}
(\Delta + k^{2})^{-1} = \sum_{j = 1}^{4} G_{j}(k).
\end{equation}
Let us recall the definitions of each of these components. For each $i
= 1, \cdots, l$, choose a point $x_{i}^{\circ}$ in the interior of  $K_{i}$. Let
$\phi_{i} \in C^{\infty}(\mathcal{M})$ be a function  with support
entirely contained in $\R^{n_{i}} \xx \mathcal{M}_{i} \setminus K_{i}$
that is identically equal to $1$ everywhere outside of a compact
set. Define $v_{i} := - \Delta \phi_{i}$ and let $u_{i}$ be the
function whose existence is asserted by {\cite[Lem.~2.7]{hs2019}} for
$v_{i}$. The term $G_{1}(k)$ is entirely supported on the diagonal ends and is
defined through
$$
G_{1}(k)(x,y) := \sum_{i = 1}^{l} \br{\Delta_{\R^{n_{i}} \xx
    \mathcal{M}_{i}} + k^{2}}^{-1}(x,y) \phi_{i}(x) \phi_{i}(y).
$$
Let $G_{int}(k)$ be an interior parametrix for the resolvent that is
supported close to the compact subset
$$
K_{\Delta} := \lb (x,x) : x \in
K \rb \subset \mathcal{M}^{2},
$$
and agreeing with the resolvent of $\Delta_{\R^{n_{i}} \xx
  \mathcal{M}_{i}}$ in a smaller neighbourhood of $K_{\Delta}$
intersected with the support of $\nabla \phi_{i}(x) \phi_{i}(y)$.
$G_{2}(k)$ is an operator with kernel that is compactly supported
in $\mathcal{M}^{2}$,
$$
G_{2}(k)(x,y) := G_{int}(k)(x,y) \br{1 - \sum_{i = 1}^{l} \phi_{i}(x) \phi_{i}(y)}.
$$
$G_{3}(k)$ has the nice property that its kernel is multiplicatively
separable into functions of $x$ and $y$,
$$
G_{3}(k)(x,y) = \sum_{i = 1}^{l} \br{\Delta_{\R^{n_{i}} \xx
    \mathcal{M}_{i}} + k^{2}}^{-1}(x_{i}^{\circ},y) u_{i}(x,k) \phi_{i}(y).
$$
For the final term $G_{4}(k)$, first the error term is defined by
  $$
(\Delta + k^{2})(G_{1}(k) + G_{2}(k) + G_{3}(k)) = I + E(k).
$$
Then the operator $G_{4}(k)$ is given by
$$
G_{4}(k)(x,y) := - \br{\Delta + k^{2}}^{-1} v_{y}(x),
$$
where $v_{y}(x) := E(k)(x,y)$. As computed in \cite{hs2019}, it is
useful to note that the error term $E(k)$ has the representation
\begin{equation}
  \label{eqtn:ErrorSplitting}
E(k) = \sum_{i = 1}^{l} (E_{1}^{i}(k) + E_{2}^{i}(k)) + E_{3}(k).
\end{equation}
Here
$$
E_{1}^{i}(k)(x,y) := -2 \nabla \phi_{i}(x) \phi_{i}(y)
\brs{\nabla_{x}\br{\Delta_{\R^{n_{i}} \xx \mathcal{M}_{i}} +
    k^{2}}^{-1}(x,y) - \nabla_{x} G_{int}(k)(x,y)},
$$
$$
E_{2}^{i}(k)(x,y) := \phi_{i}(y) v_{i}(x) \br{- \br{\Delta_{\R^{n_{i}} \xx
    \mathcal{M}_{i}} + k^{2}}^{-1}(x,y) + G_{int}(k)(x,y) +
\br{\Delta_{\R^{n_{i}} \xx \mathcal{M}_{i}} + k^{2}}^{-1}(x_{i}^{\circ},y)}
$$
for $i = 1, \cdots, l$, and
$$
E_{3}(k)(x,y) := \br{(\Delta + k^{2}) G_{int}(k)(x,y) - \delta_{y}(x)}
\br{1 - \sum_{i = 1}^{l} \phi_{i}(x) \phi_{i}(y)},
$$
where $\delta_{y}$ is the Dirac-delta function centered at $y$.

\section{Higher-Order Resolvents on $\mathcal{M}$}
  \label{sec:Resolvent}

In this section, we investigate various properties of the higher-order
resolvent operators $(\Delta + k^{2})^{-j}$ for $\mathcal{M}$.
For $a \in \N$ and $c > 0$, define the weight functions $\omega_{a}^{c} :
\mathcal{M} \xx [0,1] \rightarrow (0,\infty)$ through
$$
\omega_{a}^{c}(x,k) := \left\lbrace \begin{array}{c c} 1, & x \in K, \\
                                  \langle d(x_{i}^{\circ},x)
                                  \rangle^{-(n_{i} - a)} e^{-c k d(x_{i}^{\circ},x)}, & x \in
                                                         \R^{n_{i}}
                                                         \xx
                                                         \mathcal{M}_{i}
                                                         \setminus
                                                                                        K_{i},
                                                                                        \
                                                                                        1
                                                                                        \leq
                                                                                        i
                                                                                        \leq
                                                                                        l. \end{array}
                                                     \right.
                                                     $$
The dependence of the functions $\omega_{a}^{c}$ on the constant $c$
will often be kept implicit through the use of the shorthand notation
$\omega_{a}$, and the value of $c$ will then be understood to change
from line to line.
Higher-order analogues of the key lemma from \cite{hs2019} will now be
proved.
   
\begin{prop} 
 \label{prop:Lemma2.7Higher} 
 Let $v \in L^{\infty}(\mathcal{M})$  be compactly supported in $K$. Let
 $u : \mathcal{M} \xx \R_{+} \rightarrow \R$ be a function, whose
 existence is asserted by {\cite[Lem.~2.7]{hs2019}}, that satisfies
 $(\Delta + k^{2})u = v$,
 \begin{align*}\begin{split}  
     &\abs{u(x,k)} \lesssim \norm{v}_{\infty} \omega_{2}(x,k), \ and \\
     &\abs{\nabla u(x,k)} \lesssim \norm{v}_{\infty} \omega_{1}(x,k),
   \end{split}\end{align*}
for all $x \in \mathcal{M}$ and $0 \leq k \leq 1$.
For $j \in \N$, define $u^{(j)} := \partial_{k^{2}}^{(j)} u$. Then
\begin{equation}
  \label{eqtn:HigherLemma2.7}
  \abs{u^{(j)}(x,k)} \lesssim k^{-2 j} \norm{v}_{\infty} \omega_{2}(x,k),
  \end{equation}
for all $x \in \mathcal{M}$ and $0 < k \leq 1$.
\end{prop}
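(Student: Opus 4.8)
The plan is to differentiate the identity $(\Delta + k^2)u = v$ repeatedly in the parameter $s := k^2$ and then to invert $(\Delta + k^2)$ on each side, so as to reduce the estimate for $u^{(j)}$ to the $j=0$ case already supplied by the hypothesis and the weighted resolvent bounds recorded earlier. Writing $\partial_s$ for $\partial_{k^2}$, Leibniz applied to $(\Delta + s)u = v$ (with $v$ independent of $s$) gives, for $j \geq 1$, the recursion
\begin{equation*}
(\Delta + k^2) u^{(j)} = -j\, u^{(j-1)},
\end{equation*}
so that $u^{(j)} = -j\,(\Delta + k^2)^{-1} u^{(j-1)}$ and, iterating, $u^{(j)} = (-1)^j j!\,(\Delta + k^2)^{-j} u$. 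Thus the claim \eqref{eqtn:HigherLemma2.7} is equivalent to the bound
\begin{equation*}
\abs{\br{(\Delta + k^2)^{-j} u}(x,k)} \lesssim k^{-2j}\,\norm{v}_\infty\,\omega_2(x,k),
\end{equation*}
which says that applying $(\Delta+k^2)^{-1}$ to a function controlled by $\omega_2$ costs a factor $k^{-2}$ and preserves (up to constants, with a possibly smaller $c$) the weight $\omega_2$. It therefore suffices to prove the single-step estimate: if $\abs{w(x,k)} \lesssim A\,\omega_2(x,k)$ then $\abs{(\Delta+k^2)^{-1}w(x,k)} \lesssim k^{-2} A\,\omega_2(x,k)$, and then induct on $j$.

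For this single-step estimate I would use the resolvent bound from Proposition \ref{prop:EndResolvent} with $j=1$ on each constituent end, together with the interior estimates for $(\Delta+k^2)^{-1}$ near $K$, to write
\begin{equation*}
\abs{(\Delta+k^2)^{-1}w(x,k)} \lesssim A \int_{\mathcal{M}} \abs{(\Delta+k^2)^{-1}(x,y)}\,\omega_2(y,k)\,dy,
\end{equation*}
and then split the integral over $y \in K$, $y$ in the same end as $x$, and $y$ in a different end. On each piece one substitutes the pointwise kernel estimate $\abs{(\Delta + k^2)^{-1}(x,y)} \lesssim (d(x,y)^{2-N} + d(x,y)^{2-n_i})e^{-ckd(x,y)}$ and the explicit form of $\omega_2$, and reduces to Euclidean-type convolution integrals of the shape $\int_{\R^{n}} d(x,y)^{2-n} \langle y \rangle^{-n} e^{-ck(d(x,y)+\abs{y})}\,dy$. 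The exponential factors, after keeping a small fraction of the decay in reserve and using $d(x,y)+d(x_i^\circ,y) \gtrsim d(x_i^\circ,x)$, produce the output weight $e^{-c'kd(x_i^\circ,x)}$; the polynomial part, integrated over the region $\abs{y} \lesssim k^{-1}$ where the exponentials are $\simeq 1$, produces the gain $k^{-2}$ and the correct power $\langle d(x_i^\circ,x)\rangle^{-(n_i-2)}$. This is exactly the type of computation carried out in \cite{hs2019} for the first-order resolvent, so I would cite it where possible rather than reproduce it.

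The main obstacle is the bookkeeping in that convolution estimate: one must verify that the weight class $\{\omega_2^c : c>0\}$ is genuinely stable under $(\Delta+k^2)^{-1}$ with the uniform gain $k^{-2}$ for \emph{all} $0<k\leq 1$, paying attention to the borderline exponents (e.g.\ when some $n_i = 2$ or $N=2$, where logarithmic factors appear in Corollary \ref{cor:EndResolvent}) and to cross-end contributions where $x$ and $y$ live on ends of different asymptotic dimension. A clean way to avoid the log subtleties is to note that $\omega_2$ decays no faster than $\langle d(x_i^\circ,x)\rangle^{-(n_i-2)}$ with an exponential, which is integrable against the worst kernel after the $k^{-2}$ is extracted; but one should check that the hypothesis $\abs{u}\lesssim \norm{v}_\infty\omega_2$ (rather than a faster decay) is exactly what is preserved, so that the induction closes with the same weight $\omega_2$ at every step and does not degrade to $\omega_0$. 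Once the single-step bound is established with a uniform constant and a uniformly positive $c$ decrement absorbed over the finitely many steps $1,\dots,j$, the proposition follows immediately by induction.
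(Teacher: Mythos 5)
Your approach is genuinely different from the paper's and, as outlined, has a gap. The paper does not prove a weight-stability estimate for $(\Delta + k^2)^{-1}$ acting on $\omega_2$-decaying inputs. Instead it first establishes the integral identity $u^{(j)}(x,k) = (-1)^j\int_0^\infty t^j e^{-tk^2}e^{-t\Delta}v(x)\,dt$, justified by induction with dominated convergence (using the Markov property of the heat semigroup together with the ultracontractivity bound $\|e^{-t\Delta}v\|_\infty \lesssim \|v\|_1 t^{-n_{\min}/2}$ from {\cite[Cor.~4.9]{grigoryan}}), and then applies the elementary inequality $(tk^2)^j \lesssim e^{\epsilon tk^2}$ for any $0<\epsilon<1$ to absorb the polynomial factor $t^j$ into the exponential. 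This collapses $u^{(j)}$ in one stroke to $k^{-2j}\br{\Delta + k^2(1-\epsilon)}^{-1}\abs{v}(x)$ --- a first-order resolvent applied to the \emph{same} compactly supported $v$ --- to which {\cite[Lem.~2.7]{hs2019}} applies verbatim after the $\sqrt{1-\epsilon}$ is absorbed into the constant $c$. No kernel estimate for the resolvent on $\mathcal{M}$ is required anywhere.

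The gap in your plan is in the single-step convolution estimate. You propose to substitute the pointwise bound from Proposition \ref{prop:EndResolvent} into $\int_{\mathcal{M}}\abs{(\Delta+k^2)^{-1}(x,y)}\,\omega_2(y,k)\,dy$. But Proposition \ref{prop:EndResolvent} controls the kernel of $(\Delta_{\R^{n_i}\times\mathcal{M}_i}+k^2)^{-1}$ on a single constituent product manifold; it is not a bound for the kernel of $(\Delta_{\mathcal{M}}+k^2)^{-1}$ when $x$ and $y$ lie on \emph{different} ends of $\mathcal{M}$. For such cross-end pairs there is no constituent manifold to appeal to, and the cross-end behaviour of the resolvent on $\mathcal{M}$ is captured instead by the $G_3$ and $G_4$ pieces of the decomposition in Section \ref{subsec:Decomposition}, whose kernels are multiplicatively separable (e.g.\ $\abs{G_3(k)(x,y)} \lesssim \omega_2(x,k)\omega_2(y,k)$) rather than functions of $d(x,y)$. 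Making your iteration rigorous would therefore require the full $G_1 + G_2 + G_3 + G_4$ decomposition applied to a globally $\omega_2$-decaying input, which essentially reproduces the argument the paper uses for the gradient case in Proposition \ref{prop:HigherOrder} and is considerably more work than the heat-kernel trick above. A secondary issue is that the identity $u^{(j)} = (-1)^j j!(\Delta+k^2)^{-j}u$ presupposes that $u^{(j)}$ exists pointwise and that $(\Delta+s)u=v$ may be differentiated $j$ times in $s$; the paper justifies this as part of establishing the integral identity, and it cannot simply be asserted by a formal Leibniz computation.
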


\begin{proof}
 It will first be proved that for any $j \in \N$, the derivative
 \begin{equation}
   \label{eqtn:uDerivative}
 u^{(j)}(x,k) = \partial_{k^{2}}^{(j)} (\Delta + k^{2})^{-1}v(x) =
 (-1)^{j} j!(\Delta + k^{2})^{-(j + 1)}v(x)
 \end{equation}
 satisfies the integral identity
  \begin{equation}
    \label{eqtn:jthDerivative}
    u^{(j)}(x,k) = (-1)^{j} \int^{\infty}_{0} t^{j} e^{-t k^{2}} e^{-t
    \Delta} v(x) \, dt
\end{equation}
for any $k > 0$ and $x \in \mathcal{M}$. This will be proved by induction. The base case,
$$
u^{(0)}(x,k) = u(x,k) = \int^{\infty}_{0} e^{-t k^{2}} e^{-t \Delta}
v(x) \, dt,
$$
was shown to hold for any $k \geq 0$ in {\cite[Lem.~2.7]{hs2019}}. For the
inductive step, assume that \eqref{eqtn:jthDerivative} holds for a
particular $j \in \N$. Then,
\begin{align}\begin{split}
    \label{eqtn:InductiveStep}
    u^{(j + 1)}(x,k) &= \partial_{k^{2}} u^{(j)}(x,k) \\
    &= \partial_{k^{2}} (-1)^{j} \int^{\infty}_{0} t^{j} e^{-t k^{2}}
    e^{-t \Delta}v(x) \, dt \\
    &= (-1)^{j} \lim_{h \rightarrow 0} \int^{\infty}_{0} t^{j}
    \br{\frac{e^{-t(k^{2} + h)} - e^{-t k^{2}}}{h}} e^{-t \Delta}v(x)
    \, dt.
 \end{split}\end{align}
The mean value theorem tells us that there must exist some $c(h) \in
[0,h]$ for which
\begin{align*}\begin{split}  
\abs{\frac{e^{-t (k^{2} + h)} - e^{-t k^{2}}}{h}} 
&= t e^{-t(k^{2} + c(h))} \\
&\leq t e^{-t k^{2}}.
 \end{split}\end{align*}
As the semigroup generated by the Laplace-Beltrami operator is
Markovian, $\norm{e^{-t \Delta}v}_{\infty} \leq \norm{v}_{\infty}$ for
all $t > 0$. In addition, from {\cite[Cor.~4.9]{grigoryan}}, we
know that $\norm{e^{-t \Delta} v}_{\infty} \lesssim
\norm{v}_{1} t^{-\frac{n_{min}}{2}} \lesssim \norm{v}_{1} t^{-\frac{3}{2}}$ for $t \geq 1$. Thus,
\begin{align*}\begin{split}  
 \int^{\infty}_{0} t^{j + 1} e^{-t k^{2}} \abs{e^{-t \Delta}v(x)} \, dt &=
   \int^{1}_{0} t^{j + 1} e^{-t k^{2}} \abs{e^{-t \Delta}v(x)} \,
 dt + \int^{\infty}_{1} t^{j + 1} e^{-t k^{2}} \abs{e^{-t \Delta}v(x)} \,
 dt \\
 &\lesssim   \norm{v}_{\infty} \int^{1}_{0} t^{j + 1} e^{-t
   k^{2}} \, dt + \norm{v}_{1} \int^{\infty}_{1} t^{j - \frac{1}{2}}
 e^{-t k^{2}} \, dt \\
 &< \infty.
\end{split}\end{align*}
The dominated convergence theorem applied to
\eqref{eqtn:InductiveStep} then yields
$$
u^{(j + 1)}(x,k) = (-1)^{j + 1} \int^{\infty}_{0} t^{j + 1} e^{-t k^{2}} e^{-t \Delta} v(x) \, dt,
$$
thereby completing the inductive proof of \eqref{eqtn:jthDerivative}
for general $j \in \N$, $x \in \mathcal{M}$ and $k > 0$.

\vspace*{0.2in}

The hypothesized estimate for $u^{(j)}$ can now be proved using our freshly minted integral identity. Utilising the bound $x^{j} \lesssim e^{\epsilon x}$ for any $0 <
\epsilon < 1$ leads to
\begin{align*}\begin{split}  
 \abs{u^{(j)}(x,k)} &\leq k^{-2 j} \int^{\infty}_{0} (t k^{2})^{j}
 e^{-t k^{2}} e^{-t \Delta } \abs{v(x)} \, dt \\
 &\lesssim k^{-2 j} \int^{\infty}_{0} e^{-t k^{2}(1 - \epsilon)}
 e^{-t \Delta} \abs{v(x)} \, dt \\
 &= k^{-2 j} \br{\Delta + k^{2}(1 - \epsilon)}^{-1}\abs{v}(x).
\end{split}\end{align*}
Apply {\cite[Lem.~2.7]{hs2019}} to obtain,
$$
 \abs{u^{(j)}(x,k)} \lesssim k^{-2 j} \norm{v}_{\infty}
 \omega_{2}^{c'}(x,k\sqrt{1 - \epsilon}),
 $$
 for some $c' > 0$. Thus  \eqref{eqtn:HigherLemma2.7} holds with constant $c = c' \sqrt{1 -
   \epsilon}$.
  \end{proof}

  Notice that in the previous proposition, the proof of the integral identity
  \eqref{eqtn:jthDerivative} relied upon the key condition $k > 0$. For $k
  = 0$, if $j$ is too large then the integral in
  \eqref{eqtn:jthDerivative} will no longer be guaranteed to converge
  absolutely and, consequently, the derivative will not 
  exist. This is analogous to the well-known fact that on standard
  Euclidean space $\R^{d}$, the Riesz potentials
  $\Delta_{\R^{d}}^{-\alpha}$ are not defined for $\alpha \geq
  \frac{d}{2}$. Acting as a converse to this, the below proposition states that if $j$ is small
  enough then the derivative at $k = 0$ will exist and, moreover,
  satisfy some useful estimates. 

  \begin{prop} 
 \label{prop:DerivativeAt0} 
 Let $v \in L^{\infty}(\mathcal{M})$ be
compactly supported in $K$ and $u$ be as given in
{\cite[Lem.~2.7]{hs2019}}. Then for any $0 \leq j < \frac{n_{min}}{2}
- 1$,
 \begin{align}\begin{split}  
\label{eqtn:DerivativeAt0}
 &\abs{u^{(j)}(x,k)} \lesssim  \norm{v}_{\infty} \omega_{2j + 2}(x,k), \ and \\
 &\abs{\nabla u^{(j)}(x,k)} \lesssim 
 \norm{v}_{\infty} \omega_{2j + 1}(x,k),
\end{split}\end{align}
for all $k \in [0,1]$ and $x \in \mathcal{M}$.
Moreover, for $k > 0$,
 \begin{equation}
   \label{eqtn:DerivativeAt01}
\norm{u^{(j)}(\cdot,k) - u^{(j)}(\cdot,0)}_{L^{\infty}(\mathcal{M})} \lesssim k \norm{v}_{L^{\infty}(\mathcal{M})}
 \end{equation}
 and
 \begin{equation}
   \label{eqtn:DerivativeAt02}
\norm{\nabla u^{(j)}(\cdot,k) - \nabla
  u^{(j)}(\cdot,0)}_{L^{\infty}(\mathcal{M})} \lesssim k^{1 - 2 j} \norm{v}_{L^{\infty}(\mathcal{M})}.
 \end{equation}
\end{prop}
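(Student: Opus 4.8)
The proof hinges on the integral identity established while proving Proposition~\ref{prop:Lemma2.7Higher}, namely $u^{(j)}(x,k) = (-1)^{j}\int_{0}^{\infty} t^{j} e^{-tk^{2}} e^{-t\Delta}v(x)\,dt$ for $k>0$ (see~\eqref{eqtn:jthDerivative}), together with the Gaussian upper bounds for the heat kernel of $\mathcal{M}$ and the long--time decay $\norm{e^{-t\Delta}v}_{\infty} \lesssim \norm{v}_{1}\,t^{-n_{min}/2} \lesssim \norm{v}_{\infty}\,t^{-n_{min}/2}$ for $t\geq1$ (from {\cite[Cor.~4.9]{grigoryan}}, the last step because $v$ is supported in the compact set $K$). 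The first observation is that, when $j<\tfrac{n_{min}}{2}-1$, this representation extends continuously down to $k=0$: combined with $\norm{e^{-t\Delta}v}_{\infty}\leq\norm{v}_{\infty}$ for $t\leq1$, the decay estimate makes $\int_{0}^{\infty}t^{j}e^{-tk^{2}}\norm{e^{-t\Delta}v}_{\infty}\,dt$ converge uniformly for $k\in[0,1]$ \emph{precisely because} $j-\tfrac{n_{min}}{2}<-1$, so dominated convergence gives the existence of $u^{(j)}(\cdot,0)$ and the pointwise identity there (the existence of $\nabla u^{(j)}(\cdot,0)$ is produced along the way by the inductive argument below).

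For the pointwise bounds~\eqref{eqtn:DerivativeAt0} I would induct on $j$, the case $j=0$ being the hypothesis imported from {\cite[Lem.~2.7]{hs2019}}. For the step, \eqref{eqtn:uDerivative} gives $u^{(j)}=-\,j\,(\Delta+k^{2})^{-1}u^{(j-1)}$, so the inductive hypothesis $\abs{u^{(j-1)}(z,k)}\lesssim\norm{v}_{\infty}\,\omega_{2j}(z,k)$ reduces everything to the weighted resolvent bounds
\[
\int_{\mathcal{M}}\abs{(\Delta+k^{2})^{-1}(x,z)}\,\omega_{2j}(z,k)\,dz\lesssim\omega_{2j+2}(x,k),\qquad \int_{\mathcal{M}}\abs{\nabla_{x}(\Delta+k^{2})^{-1}(x,z)}\,\omega_{2j}(z,k)\,dz\lesssim\omega_{2j+1}(x,k),
\]
for $0\leq k\leq1$ and $0\leq j<\tfrac{n_{min}}{2}-1$. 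These I would prove by inserting the resolvent kernel estimates of \cite{hs2019} underpinning the decomposition recalled in Section~\ref{subsec:Decomposition} and splitting the $z$--integral according to which end $z$ belongs to: for $x$ in the end $E_{i}$ with $R:=d(x_{i}^{\circ},x)$, the range $z\in E_{i}$ with $d(x_{i}^{\circ},z)\lesssim R$ yields the leading term $\langle R\rangle^{-(n_{i}-2j-2)}$ (respectively $\langle R\rangle^{-(n_{i}-2j-1)}$), all other ranges and all cross--end contributions being strictly lower order, the radial integrals converging exactly because $j<\tfrac{n_{i}}{2}-1$ for every $i$; the exponential factor $e^{-ck R}$ is carried along via $d(x,z)+d(z,y)\geq d(x,y)$, at the usual cost of shrinking $c$. (Alternatively one keeps $u^{(j)}(x,k)=(-1)^{j}\int_{0}^{\infty}t^{j}e^{-tk^{2}}e^{-t\Delta}v(x)\,dt$, splits at $t=1$ — the range $t\leq1$ giving super--polynomial spatial decay via the local Gaussian bounds — and for $t\geq1$ inserts the Grigoryan--Saloff-Coste heat kernel estimates on $\mathcal{M}$, the substitution $s=R^{2}/t$ turning the $t$--integral into a $\Gamma$--type integral with the same conclusion, $e^{-ckR}$ emerging from the interplay of $e^{-cR^{2}/t}$ and $e^{-tk^{2}}$.)

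For~\eqref{eqtn:DerivativeAt01} and~\eqref{eqtn:DerivativeAt02} I would subtract the integral representations at $k$ and at $0$, writing $u^{(j)}(x,k)-u^{(j)}(x,0)=(-1)^{j}\int_{0}^{\infty}t^{j}(e^{-tk^{2}}-1)e^{-t\Delta}v(x)\,dt$, and split the $t$--integral at $t=k^{-2}$, using $\abs{e^{-tk^{2}}-1}\leq tk^{2}$ on $(0,k^{-2}]$ and $\abs{e^{-tk^{2}}-1}\leq1$ on $[k^{-2},\infty)$. With $\norm{e^{-t\Delta}v}_{\infty}\lesssim\norm{v}_{\infty}\min(1,t^{-n_{min}/2})$ the resulting integrals are elementary; the only borderline term is $k^{2}\int_{1}^{k^{-2}}t^{j+1-n_{min}/2}\,dt\lesssim\norm{v}_{\infty}\,k^{\,n_{min}-2-2j}$, which is $\lesssim\norm{v}_{\infty}k$ because $2j<n_{min}-2$ forces $2j\leq n_{min}-3$ (as $j\in\N$), i.e.\ $n_{min}-2-2j\geq1$; this gives~\eqref{eqtn:DerivativeAt01}. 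For~\eqref{eqtn:DerivativeAt02} one runs the same splitting with a gradient heat kernel input $\norm{\nabla e^{-t\Delta}v}_{\infty}\lesssim\norm{v}_{\infty}\min(t^{-1/2},t^{-(n_{min}+1)/2})$ — or, to sidestep long--time gradient bounds, expands $\nabla u^{(j)}(\cdot,k)-\nabla u^{(j)}(\cdot,0)$ through the resolvent identity and uses $\norm{\nabla(\Delta+k^{2})^{-1}}_{L^{\infty}\to L^{\infty}}\lesssim k^{-1}$ together with~\eqref{eqtn:DerivativeAt01} for $j-1$ and~\eqref{eqtn:DerivativeAt0} at $k=0$ — in either case every contributing power of $k$ is $\geq k^{1-2j}$ (the range $t\leq1$ gives $k^{2}$, and $1\leq t\leq k^{-2}$ and $t\geq k^{-2}$ give at most $k^{n_{min}-1-2j}$, all $\lesssim k^{1-2j}$ since $n_{min}\geq2$).

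The main obstacle is the weighted resolvent bound in the second paragraph. The coarse estimate $\abs{(\Delta+k^{2})^{-1}(x,z)}\lesssim(d(x,z)^{2-N}+d(x,z)^{2-n_{min}})e^{-ckd(x,z)}$, valid on all of $\mathcal{M}$, when iterated only reproduces the weaker decay $\langle R\rangle^{-(n_{min}-2j-2)}$ on \emph{every} end, whereas~\eqref{eqtn:DerivativeAt0} asserts the sharp end--dependent rate $\langle R\rangle^{-(n_{i}-2j-2)}$ on $E_{i}$. Extracting this forces the use of the resolvent (equivalently heat kernel) estimates on $\mathcal{M}$ in the refined form that remembers which end a point lives on — essentially the kernel bounds underlying the decomposition of Section~\ref{subsec:Decomposition} — and careful bookkeeping of the interplay between the polynomial factors, the Gaussian decay, and the factor $e^{-ckd}$; the hypothesis $j<\tfrac{n_{min}}{2}-1$ is exactly what makes all the radial integrals converge.
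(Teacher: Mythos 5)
Your treatment of \eqref{eqtn:DerivativeAt01} is essentially the paper's (the paper splits the $t$--integral at $t=1$ and uses $j\leq \tfrac{n_{min}}{2}-\tfrac32$ to reduce to a $\Gamma$--type integral, while you split at $t=k^{-2}$; both land on the same bound $k^{n_{min}-2-2j}\lesssim k$, so this part is correct). Your route to \eqref{eqtn:DerivativeAt0} is genuinely different: you iterate the resolvent of $\mathcal{M}$ against the weight $\omega_{2j}$, which requires the weighted bounds $\int_{\mathcal{M}}\abs{(\Delta+k^2)^{-1}(x,z)}\omega_{2j}(z,k)\,dz\lesssim\omega_{2j+2}(x,k)$ and its gradient analogue, and you yourself flag that establishing these on $\mathcal{M}$ is the main obstacle. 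The paper sidesteps this entirely: it first proves $\norm{\Delta^m u^{(j)}}_{L^\infty(V)}\lesssim 1$ by iterating the algebraic identity $(\Delta+k^2)u^{(j)}=-ju^{(j-1)}$, then multiplies by a cutoff $\zeta_i$ supported on the end $E_i$ and writes $\zeta_iu^{(j)}=(\Delta_{\R^{n_i}\times\mathcal{M}_i}+k^2)^{-(j+1)}\big((\Delta+k^2)^{j+1}(\zeta_iu^{(j)})\big)$, where the argument is compactly supported; Corollaries~\ref{cor:EndResolvent} and \ref{cor:GradEndResolvent} on the \emph{single} end $\R^{n_i}\times\mathcal{M}_i$ then give \eqref{eqtn:DerivativeAt0} directly. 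This localise--then--transfer argument (identical to the one in \cite[Lem.~2.7]{hs2019}) avoids having to prove any weighted resolvent estimate on $\mathcal{M}$ itself, and is the tool you are missing.

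There is, however, a genuine gap in your treatment of \eqref{eqtn:DerivativeAt02}. Both routes you propose presuppose gradient mapping bounds that are not available here. The first requires $\norm{\nabla e^{-t\Delta}v}_\infty\lesssim\norm{v}_\infty\min(t^{-1/2},t^{-(n_{min}+1)/2})$, but the paper is explicit (in the Introduction) that gradient heat kernel estimates on this class of manifolds are absent from the literature, and this absence is precisely why the whole paper works through the resolvent decomposition rather than the heat kernel. The second route requires $\norm{\nabla(\Delta+k^2)^{-1}}_{L^\infty\to L^\infty}\lesssim k^{-1}$; on $\R^d$ this follows by scaling, but on $\mathcal{M}$ no such scaling exists, and the only gradient resolvent bound the paper establishes (Proposition~\ref{prop:HigherOrder}) is for inputs compactly supported in $K$, which $u^{(j-1)}(\cdot,k)-u^{(j-1)}(\cdot,0)$ is not. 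The paper's actual argument avoids gradients of the resolvent entirely: it uses $(\Delta+k^2)u^{(j)}=-ju^{(j-1)}$ and $\Delta u^{(j)}(\cdot,0)=-ju^{(j-1)}(\cdot,0)$ to compute $\Delta u^{(j)}(\cdot,k)-\Delta u^{(j)}(\cdot,0)=-j[u^{(j-1)}(\cdot,k)-u^{(j-1)}(\cdot,0)]-k^2u^{(j)}(\cdot,k)$, bounds the right--hand side by $k^{1-2j}\norm{v}_\infty$ via \eqref{eqtn:DerivativeAt01} and Proposition~\ref{prop:Lemma2.7Higher}, and then converts the $\Delta$--bound into a $\nabla$--bound using the $L^\infty$ Sobolev inequality $\norm{\nabla g}_\infty\lesssim\norm{\Delta g}_\infty+\norm{g}_\infty$ from Remark~2.8 of \cite{hs2019}. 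That additive inequality (not a Landau--type interpolation, which would also need a scaling structure) is the crucial ingredient you need here.
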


\begin{proof}  
From the previous proposition, we know that the derivative
$u^{(j)}(x,k)$ exists for $k > 0$ and is given by the integral identity
\eqref{eqtn:jthDerivative}. With the addition of the assumption $j
< \frac{n_{min}}{2} - 1$, it is clear that $u^{(j)}(x,0)$ will also
exist and that \eqref{eqtn:jthDerivative} will be equally
applicable. Indeed, this follows in an identical inductive manner as
the proof for $k > 0$ from Proposition \ref{prop:Lemma2.7Higher}. However,
in order to apply the dominated convergence theorem at the inductive
step of the proof, it will be required that
$$
\int^{\infty}_{0} t^{i + 1} e^{-t \cdot 0} \abs{e^{-t \Delta}v(x)} \, dt
$$
is absolutely convergent for when $1 \leq i + 1 \leq j$. This follows from
\begin{align*}\begin{split}  
 \int^{\infty}_{0} t^{i + 1} \abs{e^{-t \Delta}v(x)} \, dt &=
 \int^{1}_{0} t^{i + 1} \abs{e^{-t \Delta}v(x)} \, dt + \int^{\infty}_{1} t^{i + 1} \abs{e^{-t \Delta}v(x)} \, dt \\
 &\lesssim \norm{v}_{\infty} \int^{1}_{0} t^{i + 1} \, dt + \norm{v}_{1} \int^{\infty}_{1} t^{i + 1 - \frac{n_{min}}{2}} \,
 dt.
\end{split}\end{align*}
The second integral will converge if and only if $i + 1 <
\frac{n_{min}}{2} - 1$, which is implied by our assumption $j <
\frac{n_{min}}{2} - 1$.

\vspace*{0.2in}

Notice that the restriction $0 \leq j < \frac{n_{min}}{2} - 1$ not only implies the
existence of $u^{(j)}(x,0)$, but it also tells us that
$\frac{(-1)^{j}}{j!}u^{(j)}$ is a solution of the equation
$(\Delta + k^{2})^{j + 1}\tilde{u} = v$ with $u^{(j)} \in L^{\infty}$
uniformly in $k \in [0,1]$. Also observe that from
the identity \eqref{eqtn:uDerivative},
$$
(\Delta + k^{2})u^{(j)} = -j u^{(j - 1)}.
$$
Recall from the proof of {\cite[Lem.~2.7]{hs2019}} that
$\norm{\Delta^{m} u^{(0)}}_{L^{\infty}(V)} \lesssim 1$ for any
compact $V \subset \mathcal{M} \setminus K$ and $m \in \N$. Then, for
any $1 \leq j < \frac{n_{min}}{2} - 1$,
\begin{align*}\begin{split}  
 \norm{\Delta u^{(j)}}_{L^{\infty}(V)} &\leq \norm{(\Delta + k^{2}) u^{(j)}}_{L^{\infty}(V)} + k^{2}\norm{u^{(j)}}_{L^{\infty}(V)} \\
&\simeq \norm{u^{(j - 1)}}_{L^{\infty}(V)} + k^{2} \norm{u^{(j)}}_{L^{\infty}(V)} \\
&\lesssim 1.
\end{split}\end{align*}
 We also have
\begin{align*}\begin{split}  
 \norm{\Delta^{2} u^{(j)}}_{L^{\infty}(V)} &\leq \norm{\Delta (\Delta +
   k^{2})u^{(j)}}_{L^{\infty}(V)} + k^{2} \norm{\Delta u^{(j)}}_{L^{\infty}(V)} \\
&\simeq \norm{\Delta u^{(j - 1)}}_{L^{\infty}(V)} + k^{2} \norm{\Delta u^{(j)}}_{L^{\infty}(V)} \\
&\lesssim 1,
\end{split}\end{align*}
for all $1 \leq j < \frac{n_{min}}{2} - 1$.
This process can be repeated to arbitrarily high order to obtain
$\norm{\Delta^{m} u^{(j)}}_{L^{\infty}(V)} \lesssim 1$ for any $m, \, j \in \N$ and compact
subset $V \subset \mathcal{M} \setminus K$. This proves that $u^{(j)}
\in C^{\infty}$ uniformly in $k \in [0,1]$ on any compact subset
outside of $K$. The argument
for \eqref{eqtn:DerivativeAt0} then follows in an
identical manner to the corresponding estimates in
{\cite[Lem.~2.7]{hs2019}}. In particular, for $\zeta_{i} \in
C^{\infty}(\mathcal{M})$ with support contained in $\R^{n_{i}} \xx
\mathcal{M}_{i} \setminus K_{i}$, therefore satisfying $\mathrm{supp} \, v
\subset (\mathrm{supp} \,
\zeta_{i})^{c}$, and such that $(1 - \zeta_{i})$ is
compactly supported when viewed as a function on $\R^{n_{i}} \xx \mathcal{M}_{i}$. Define
$$
\tilde{u}_{i}^{(j)}(x,k) := \br{\Delta_{\R^{n_{i}} \xx
    \mathcal{M}_{i}} + k^{2}}^{-(j + 1)} \br{(\Delta + k^{2})^{(j +
    1)} (\zeta_{i} u^{(j)}(x,k))}.
$$
It can be reasoned that $\tilde{u}_{i}^{(j)} = u^{(j)} \zeta_{i}$, at which
point Corollaries \ref{cor:EndResolvent} and
\ref{cor:GradEndResolvent} can be applied to produce \eqref{eqtn:DerivativeAt0}.

\vspace*{0.1in}

Let's now prove \eqref{eqtn:DerivativeAt01}. From the
relation \eqref{eqtn:jthDerivative} in combination with the estimates
$\norm{e^{-t \Delta}v}_{\infty} \lesssim \norm{v}_{\infty}$ for $t
\leq 1$ and $\norm{e^{-t \Delta}v}_{\infty} \lesssim \norm{v}_{1}
t^{-\frac{n_{min}}{2}}$ for $t > 1$,
$$
 \abs{u^{(j)}(x,k) - u^{(j)}(x,0)} \leq
 \norm{v}_{\infty}\int^{1}_{0} t^{j} (1 - e^{-t k^{2}}) \, dt +
 \norm{v}_{1} \int^{\infty}_{1} t^{j}(1 - e^{-t k^{2}})
 t^{-\frac{n_{min}}{2}} \, dt.
$$
For the first term, the estimate $1 - e^{-x} \leq x$ for $x \in (0,1)$
implies
\begin{align*}\begin{split}  
\int^{1}_{0} t^{j} (1 - e^{-t k^{2}}) \, dt &\leq k^{2}
\int^{1}_{0} t^{j + 1} \, dt \\ &\leq k^{2}.
 \end{split}\end{align*}
For the second term, notice that since $j < \frac{n_{min}}{2} - 1$ we
must have $j \leq \frac{n_{min}}{2} - \frac{3}{2}$. A change of
variables then leads to
\begin{align*}\begin{split}  
 \int^{\infty}_{1} t^{j}(1 - e^{-t k^{2}}) t^{-\frac{n_{min}}{2}} \,
 dt &\leq \int^{\infty}_{1} t^{-\frac{3}{2}} (1 - e^{-t k^{2}}) \, dt \\
 &= k \int^{\infty}_{k^{2}} t^{-\frac{3}{2}} (1 - e^{-t})
 \, dt \\
 &\leq k \int^{\infty}_{0} t^{- \frac{3}{2}} (1 - e^{-t})
 \, dt \\
 &\lesssim k.
\end{split}\end{align*}

\vspace*{0.1in}

Finally, let us consider the validity of
\eqref{eqtn:DerivativeAt02}. This estimate has already been proved for
the case $j = 0$ in {\cite[Lem.~2.7]{hs2019}}. For $1 \leq j <
\frac{n_{min}}{2} - 1$, observe that
$$
(\Delta + k^{2}) u^{(j)}(x,k) = - j u^{(j - 1)}(x,k) \quad and \quad
\Delta u^{(j)}(x,0) = -j u^{(j - 1)}(x,0).
$$
Therefore,
$$
 \abs{\Delta u^{(j)}(x, k) - \Delta u^{(j)}(x,0) }
 \lesssim k^{2} \abs{u^{(j)}(x,k)} + \abs{u^{(j - 1)}(x,k) -
   u^{(j - 1)}(x,0)}.
$$
Proposition \ref{prop:Lemma2.7Higher} and \eqref{eqtn:DerivativeAt01}
then imply
\begin{align*}\begin{split}  
 \abs{\Delta u^{(j)}(x,k) - \Delta u^{(j)}(x,0)}
 &\lesssim \br{k^{2 - 2 j} + k} \norm{v}_{\infty} \\
 &\lesssim k^{1 - 2j} \norm{v}_{\infty}.
\end{split}\end{align*}
Remark 2.8 of \cite{hs2019} states that
$$
\norm{\nabla g}_{L^{\infty}(\mathcal{M})} \lesssim \norm{\Delta
  g}_{L^{\infty}(\mathcal{M})} + \norm{g}_{L^{\infty}(\mathcal{M})}.
$$
Therefore,
\begin{align*}\begin{split}  
 \norm{\nabla (u^{(j)}(\cdot,k) - u^{(j)}(\cdot,0))}_{\infty}
 &\lesssim \norm{\Delta (u^{(j)}(\cdot,k) - u^{(j)}(\cdot,0))}_{\infty} + \norm{u^{(j)}(\cdot,k) - u^{(j)}(\cdot,0)}_{\infty} \\
 &\lesssim k^{1 - 2 j} \norm{v}_{\infty},
\end{split}\end{align*}
which completes our proof.
\end{proof}

\begin{prop} 
 \label{prop:HigherOrder} 
 Let $v \in L^{\infty}(\mathcal{M})$ be compactly supported in
 $K$. Then, for any $j \in \N^{*}$,
 \begin{equation}
   \label{eqtn:HigherOrder}
   \abs{\nabla (\Delta + k^{2})^{-j}v(x)} \lesssim k^{-2 (j - 1)}
   \norm{v}_{\infty} \omega_{1}(x,k)
 \end{equation}
 for all $x \in \mathcal{M}$ and $0 < k \leq 1$.
\end{prop}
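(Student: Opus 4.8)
For $j=1$ the asserted bound is exactly the estimate $\abs{\nabla u(x,k)} \lesssim \norm{v}_{\infty}\,\omega_{1}(x,k)$ that is assumed of $u$ in Proposition \ref{prop:Lemma2.7Higher} (taken from {\cite[Lem.~2.7]{hs2019}}), so I may assume $j \geq 2$ and set $w := (\Delta+k^{2})^{-j}v = \tfrac{(-1)^{j-1}}{(j-1)!}\,u^{(j-1)}$ via the identity \eqref{eqtn:uDerivative}. The plan is to estimate $\nabla w$ separately on a fixed compact neighbourhood of the centre $K$, where $\omega_{1} \simeq 1$ and no spatial weight is needed, and on the ends, where the decay encoded in $\omega_{1}$ must be recovered from the higher-order end-resolvent bound \eqref{eqtn:EndResolvent2}.

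The first step is a crude, weight-free global estimate. Since $(\Delta+k^{2})w = \tfrac{(-1)^{j}}{(j-2)!}\,u^{(j-2)}$, Proposition \ref{prop:Lemma2.7Higher} gives $\norm{\Delta w}_{\infty} \leq \norm{(\Delta+k^{2})w}_{\infty} + k^{2}\norm{w}_{\infty} \lesssim k^{-2(j-2)}\norm{v}_{\infty} \leq k^{-2(j-1)}\norm{v}_{\infty}$ (using $0 < k \leq 1$), and then Remark 2.8 of \cite{hs2019} yields $\norm{\nabla w}_{L^{\infty}(\mathcal{M})} \lesssim \norm{\Delta w}_{\infty} + \norm{w}_{\infty} \lesssim k^{-2(j-1)}\norm{v}_{\infty}$. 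Since $\omega_{1}$ is bounded below, uniformly in $k \in (0,1]$, on every fixed compact subset of $\mathcal{M}$, this already establishes \eqref{eqtn:HigherOrder} on $K$ and on the compact neck regions of the ends.

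It remains to recover the spatial decay far out on the ends. For each $i$ fix $\zeta_{i} \in C^{\infty}(\mathcal{M})$ supported in $E_{i} \simeq \R^{n_{i}} \xx \mathcal{M}_{i} \setminus K_{i}$ and identically $1$ outside a compact set, chosen so that $\chi_{0} := 1 - \sum_{i}\zeta_{i} \in C_{c}^{\infty}(\mathcal{M})$ equals $1$ near $K$; then $\zeta_{i}v = 0$ because $\mathrm{supp}\,\zeta_{i} \subset E_{i} \subset \mathcal{M}\setminus K$, and $S_{i} := \mathrm{supp}\,\nabla\zeta_{i}$ is a compact subset of $\mathcal{M}\setminus K$. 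On a compact neighbourhood $V \subset\subset \mathcal{M}\setminus K$ of $S_{i}$ the function $w$ satisfies $(\Delta+k^{2})^{j}w = 0$; iterating the relations $(\Delta+k^{2})u^{(i)} = -i\,u^{(i-1)}$ and $\Delta u^{(i)} = -i\,u^{(i-1)} - k^{2}u^{(i)}$ (valid there since $v\equiv 0$) with Proposition \ref{prop:Lemma2.7Higher} gives $\norm{\Delta^{m}w}_{L^{\infty}(V)} \lesssim k^{-2(j-1)}\norm{v}_{\infty}$ for every $m \geq 0$, and interior elliptic regularity for the (fixed, $k$-independent) operator $\Delta$ upgrades this to $\norm{\nabla^{m}w}_{L^{\infty}(S_{i})} \lesssim k^{-2(j-1)}\norm{v}_{\infty}$ for all $m \leq 2j-1$. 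Extended by zero, $\zeta_{i}w$ satisfies $(\Delta_{\R^{n_{i}} \xx \mathcal{M}_{i}} + k^{2})^{j}(\zeta_{i}w) = [(\Delta+k^{2})^{j},\zeta_{i}]w =: F_{i}$ on $\R^{n_{i}} \xx \mathcal{M}_{i}$, where $F_{i}$ is supported in $S_{i}$, involves derivatives of $w$ of order at most $2j-1$, and hence $\norm{F_{i}}_{\infty} \lesssim k^{-2(j-1)}\norm{v}_{\infty}$. Since $\zeta_{i}w$ decays exponentially by Proposition \ref{prop:Lemma2.7Higher} and $\Delta_{\R^{n_{i}} \xx \mathcal{M}_{i}} + k^{2}$ is injective on $L^{2}$ for $k > 0$, one obtains $\zeta_{i}w = (\Delta_{\R^{n_{i}} \xx \mathcal{M}_{i}} + k^{2})^{-j}F_{i}$, whence
$$
\nabla_{x}(\zeta_{i}w)(x,k) = \int_{S_{i}} \nabla_{x}(\Delta_{\R^{n_{i}} \xx \mathcal{M}_{i}} + k^{2})^{-j}(x,y)\,F_{i}(y,k)\,dy .
$$
Inserting \eqref{eqtn:EndResolvent2}, using $\int_{S_{i}}(d(x,y)^{1-N} + d(x,y)^{1-n_{i}})\,dy \lesssim 1$ for $x$ near $S_{i}$ and $d(x,y) \simeq \langle d(x_{i}^{\circ},x)\rangle$ uniformly for $y \in S_{i}$ once $d(x_{i}^{\circ},x)$ is large, gives $\abs{\nabla_{x}(\zeta_{i}w)(x,k)} \lesssim k^{-2(j-1)}\norm{v}_{\infty}\,\omega_{1}(x,k)$. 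Finally write $\nabla w = \nabla(\chi_{0}w) + \sum_{i}\nabla(\zeta_{i}w)$: the summand $\nabla(\chi_{0}w)$ is supported on the compact set $\mathrm{supp}\,\chi_{0}$, where $\omega_{1} \gtrsim 1$, and is bounded there by $k^{-2(j-1)}\norm{v}_{\infty}$ by the first step, while the remaining summands are controlled by the preceding display; this proves \eqref{eqtn:HigherOrder}.

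The main obstacle is the interior step: obtaining $\norm{\nabla^{m}w}_{L^{\infty}(S_{i})} \lesssim k^{-2(j-1)}\norm{v}_{\infty}$ with the sharp power of $k$ and with constants uniform in $k \in (0,1]$. Everything downstream is bookkeeping with the end-resolvent kernel estimate \eqref{eqtn:EndResolvent2}, which is already supplied by Proposition \ref{prop:EndResolvent}.
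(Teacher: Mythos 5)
Your route---a weight-free global bound via $\norm{\nabla w}_\infty\lesssim\norm{\Delta w}_\infty+\norm{w}_\infty$, then a cutoff-and-commutator reduction to the end resolvents $(\Delta_{\R^{n_i}\xx\mathcal{M}_i}+k^2)^{-j}$---is genuinely different from the paper's, which instead factors $(\Delta+k^2)^{-j}=(\Delta+k^2)^{-1}(\Delta+k^2)^{-(j-1)}$, uses the first-order decomposition $(\Delta+k^2)^{-1}=\sum_{i=1}^4 G_i(k)$, and bounds each $\int_{\mathcal{M}}\abs{\nabla_x G_i(k)(x,y)}\,\abs{(\Delta+k^2)^{-(j-1)}v(y)}\,dy$ by feeding the $L^\infty$ estimate of Proposition~\ref{prop:Lemma2.7Higher} for the inner factor into the kernel estimates from the preliminaries. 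However, as written your last step does not produce the claimed exponent of $k$. You establish $\norm{F_i}_\infty\lesssim k^{-2(j-1)}\norm{v}_\infty$, and the gradient kernel bound \eqref{eqtn:EndResolvent2} already carries its own prefactor $k^{-2(j-1)}$; inserting the latter into $\nabla_x(\zeta_i w)(x,k)=\int_{S_i}\nabla_x(\Delta_{\R^{n_i}\xx\mathcal{M}_i}+k^2)^{-j}(x,y)\,F_i(y,k)\,dy$ therefore gives $\abs{\nabla_x(\zeta_i w)(x,k)}\lesssim k^{-4(j-1)}\norm{v}_\infty\,\omega_1(x,k)$, which is weaker than \eqref{eqtn:HigherOrder} by a full factor $k^{-2(j-1)}$. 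The loss is genuine: lumping the whole commutator into a single forcing term $F_i$ is too crude, since the worst contribution to $F_i$ (the term involving $w$ without derivatives) already costs $k^{-2(j-1)}$, and the order-$j$ inverse then charges the same power again.

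The approach can be repaired by distributing the inverse across the commutator. Write $L:=\Delta+k^2$ and use the operator identity $[L^j,\zeta_i]=\sum_{m=0}^{j-1}L^m[\Delta,\zeta_i]L^{j-1-m}$; since $L^{j-1-m}w=(\Delta+k^2)^{-(m+1)}v$ and $L$ agrees with $\Delta_{\R^{n_i}\xx\mathcal{M}_i}+k^2$ on a neighbourhood of $S_i$, one gets
\begin{equation*}
\zeta_i w=\sum_{m=0}^{j-1}\br{\Delta_{\R^{n_i}\xx\mathcal{M}_i}+k^2}^{-(j-m)}g_m,\qquad g_m:=[\Delta,\zeta_i]\br{\Delta+k^2}^{-(m+1)}v,
\end{equation*}
each $g_m$ supported in the fixed compact set $S_i$. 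The correct bookkeeping is $\norm{g_m}_\infty\lesssim k^{-2m}\norm{v}_\infty$: for $m\leq j-2$ this follows from Proposition~\ref{prop:Lemma2.7Higher} for $(\Delta+k^2)^{-(m+1)}v$ together with the present proposition at the smaller index $m+1$ for its gradient (i.e.\ an induction on $j$), and for $m=j-1$ it is exactly your Step~1 estimate. Each summand then contributes $k^{-2(j-m-1)}\cdot k^{-2m}=k^{-2(j-1)}$ after inserting the order-$(j-m)$ kernel bound, so the sum gives \eqref{eqtn:HigherOrder}. With this correction your argument goes through; without it the exponent of $k$ is wrong.
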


\begin{proof}
  Before beginning the proof, note that in the notation of
  Propositions \ref{prop:Lemma2.7Higher} and \ref{prop:DerivativeAt0}, $\nabla (\Delta + k^{2})^{-j}v
  = \nabla u^{(j - 1)}$. We use resolvent notation in this proposition
  to simplify our computations.

 This estimate has already been proved for $j = 1$ in
 {\cite[Lem.~2.7]{hs2019}}. For $j > 1$,
 \begin{align*}\begin{split}  
 \abs{\nabla (\Delta + k^{2})^{-j}v(x)} &= \abs{\nabla (\Delta +
   k^{2})^{-1} (\Delta + k^{2})^{-(j - 1)}v(x)} \\
 &\leq \sum_{i = 1}^{4} \abs{\nabla G_{i}(k)(\Delta + k^{2})^{-(j -
     1)}v(x)} \\
 &\leq \sum_{i = 1}^{4} \int_{\mathcal{M}} \abs{\nabla_{x} G_{i}(k)(x,y)} \abs{(\Delta +
 k^{2})^{-(j - 1)}v(y)} \, dy \\
 &=: \sum_{i = 1}^{4} Y_{i}(k)(x).
\end{split}\end{align*}
Our desired estimate will be proved for each of the above terms
separately.

\vspace*{0.1in}

\underline{\textit{The Term $Y_{1}(k)$.}}

\vspace*{0.1in}

The product rule implies that
$$
\nabla_{x} G_{1}(k)(x,y) = \sum_{i = 1}^{l} \nabla_{x} (\Delta_{\R^{n_{i}} \xx
\mathcal{M}_{i}} + k^{2})^{-1}(x,y) \phi_{i}(x) \phi_{i}(y) +
(\Delta_{\R^{n_{i}} \xx \mathcal{M}_{i}} + k^{2})^{-1}(x,y) \nabla
\phi_{i}(x) \phi_{i}(y).
$$
This leads to the splitting
$$
 Y_{1}(k)(x) \leq Y_{1}^{1}(k)(x) + Y^{2}_{1}(k)(x),
 $$
 where
 $$
Y^{1}_{1}(k)(x) = \int_{\mathcal{M}} \abs{\sum_{i = 1}^{l} \nabla_{x} (\Delta_{\R^{n_{i}} \xx
\mathcal{M}_{i}} + k^{2})^{-1}(x,y) \phi_{i}(x) \phi_{i}(y)} \abs{(\Delta +
k^{2})^{-(j - 1)}v(y)} \, dy
$$
and
 $$
Y^{2}_{1}(k)(x) = \int_{\mathcal{M}} \abs{\sum_{i = 1}^{l} (\Delta_{\R^{n_{i}} \xx
\mathcal{M}_{i}} + k^{2})^{-1}(x,y) \nabla \phi_{i}(x) \phi_{i}(y)} \abs{(\Delta +
k^{2})^{-(j - 1)}v(y)} \, dy.
$$
If $x \in K$, then both of these terms will clearly vanish, trivially
leading to
$$
Y_{1}(k)(x) \lesssim k^{-2(j - 1)} \norm{v}_{\infty} \omega_{1}(x,k).
$$
Let us consider $x \in \R^{n_{i}} \xx
\mathcal{M}_{i} \setminus K_{i}$ for some $1 \leq i \leq l$.
 For the first term, let $D > 0$ be
such that $d(x_{i}^{\circ},y) \geq 2 D$ for all $y \in
\R^{n_{i}} \xx \mathcal{M}_{i} \setminus K_{i}$.  Corollary
\ref{cor:GradEndResolvent} and Proposition \ref{prop:Lemma2.7Higher}
then produce,
\begin{align*}\begin{split}  
 &Y_{1}^{1}(k)(x) \lesssim \int_{\R^{n_{i}} \xx \mathcal{M}_{i} \setminus K_{i}}
 \abs{\nabla_{x} (\Delta_{\R^{n_{i}} \xx \mathcal{M}_{i}} +
   k^{2})^{-1}(x,y)} \abs{(\Delta + k^{2})^{-(j - 1)}v(y)} \, dy \\
 & \ \ \lesssim k^{-2(j - 2)} \norm{v}_{\infty} \int_{\R^{n_{i}} \xx \mathcal{M}_{i}
   \setminus K_{i}} \brs{d(x,y)^{1 - n_{i}} + d(x,y)^{1 - N}} \langle
 d(x_{i}^{\circ},y) \rangle^{2 - n_{i}} e^{- c k (d(x,y) +
 d(x_{i}^{\circ},y))} \, dy \\
&=: I_{1} + I_{2},
\end{split}\end{align*}
where the above integral has been separated into two distinct integrals $I_{1}$ and
$I_{2}$ over the respective regions
$$
\mathcal{A}_{1} := (\R^{n_{i}} \xx \mathcal{M}_{i} \setminus K_{i})
\cap B(x,d(x_{i}^{\circ},x) / 2)
$$
and
$$
\mathcal{A}_{2} := (\R^{n_{i}} \xx \mathcal{M}_{i} \setminus K_{i})
\cap B(x,d(x_{i}^{\circ},x) / 2)^{c}.
$$
 For $I_{1}$, it is evident that
$d(x_{i}^{\circ},y) \gtrsim d(x_{i}^{\circ},x)$ for any $y \in
\mathcal{A}_{1}$. Therefore,
\begin{align*}\begin{split}  
 I_{1} &= k^{-2(j - 2)} \norm{v}_{\infty} \int_{\mathcal{A}_{1}} \brs{d(x,y)^{1 - n_{i}} + d(x,y)^{1 - N}}
 \langle d(x_{i}^{\circ},y) \rangle^{2 - n_{i}} \exp (-c k (d(x,y) +
 d(x_{i}^{\circ},y))) \, dy \\
 &\lesssim k^{-2(j - 2)} \norm{v}_{\infty} \langle d(x_{i}^{\circ},x) \rangle^{2 - n_{i}} e^{-c k
   d(x_{i}^{\circ},x)}  \int_{\mathcal{A}_{1}} \brs{d(x,y)^{1 - n_{i}} +
 d(x,y)^{1 - N}} \exp(- c k d(x,y)) \, dy \\
 &\lesssim k^{-2(j - 2)} \norm{v}_{\infty} \langle d(x_{i}^{\circ},x) \rangle^{2 - n_{i}} e^{-c k
   d(x_{i}^{\circ},x)} \br{\int_{B(x,D)} d(x,y)^{1 - N} \, dy +
   \int_{B(x,d(x_{i}^{\circ},x)/2) \setminus B(x,D)} d(x,y)^{1 -
     n_{i}} \, dy} \\
 &\lesssim k^{-2(j - 2)} \norm{v}_{\infty} \langle d(x_{i}^{\circ},x) \rangle^{2 - n_{i}} e^{-c k
   d(x_{i}^{\circ},x)} \br{1 + d(x_{i}^{\circ},x)}
 \\
 &\lesssim k^{-2 (j - 1)} \norm{v}_{\infty} \langle d(x_{i}^{\circ},x) \rangle^{1 - n_{i}} e^{-\frac{c}{2} k d(x_{i}^{\circ},x)},
\end{split}\end{align*}
where the last line follows on absorbing the term $(k
d(x_{i}^{\circ},x))^{2}$ into the exponential.
For the integral $I_{2}$,
\begin{align*}\begin{split}  
 I_{2} &= k^{-2(j - 2)} \norm{v}_{\infty}\int_{\mathcal{A}_{2}} \brs{d(x,y)^{1 -
     n_{i}} + d(x,y)^{1 - N}} \langle d(x_{i}^{\circ},y) \rangle^{2 -
   n_{i}} e^{- c k (d(x,y) + d(x_{i}^{\circ},y))} \, dy \\
 &\lesssim k^{-2(j - 2)} \norm{v}_{\infty} d(x_{i}^{\circ},x)^{1 - n_{i}} e^{-c k
   d(x_{i}^{\circ},x)} \int_{B(x,d(x_{i}^{\circ},x)/2)^{c}} \langle
 d(x_{i}^{\circ},y) \rangle^{2 - n_{i}} e^{- c k d(x_{i}^{\circ},y)}
 \, dy \\
 &\lesssim k^{-2(j - 2)} \norm{v}_{\infty} \langle d(x_{i}^{\circ},x) \rangle^{1 -
   n_{i}} e^{-c k d(x_{i}^{\circ},x)} \int_{\R^{n_{i}}}
 \frac{1}{\abs{y_{1}}^{n_{i} - 2}} e^{-c k \abs{y_{1}}} \, dy_{1} \\
 &\simeq k^{-2(j - 2)} \norm{v}_{\infty} \langle d(x_{i}^{\circ},x) \rangle^{1 -
   n_{i}} e^{-c k d(x_{i}^{\circ},x)} \int_{0}^{\infty} r e^{-c k r} \, dr
 \\
 &\lesssim k^{-2(j - 1)} \norm{v}_{\infty} \langle d(x_{i}^{\circ},x) \rangle^{1 -
   n_{i}} e^{-c k d(x_{i}^{\circ},x)}.
\end{split}\end{align*}
For the term $Y_{1}^{2}(k)$, Corollary
\ref{cor:EndResolvent} and Proposition \ref{prop:Lemma2.7Higher} imply
\begin{align*}\begin{split}  
 &Y^{2}_{1}(k)(x) = \int_{\R^{n_{i}} \xx \mathcal{M}_{i} \setminus
   K_{i}} \abs{(\Delta_{\R^{n_{i}} \xx \mathcal{M}_{i}} + k^{2})^{-1}(x,y) \nabla
   \phi_{i}(x) \phi_{i}(y)} \abs{(\Delta + k^{2})^{-(j - 1)}v(y)} \,
 dy \\
 & \ \ \lesssim \abs{\nabla \phi_{i}(x)} k^{-2(j - 2)} \norm{v}_{\infty} \int_{\R^{n_{i}} \xx
 \mathcal{M}_{i} \setminus K_{i}} \brs{d(x,y)^{2 - n_{i}} + d(x,y)^{2
   - N}} \langle d(x_{i}^{\circ},y) \rangle^{2 - n_{i}} e^{-c k
 (d(x,y) + d(x_{i}^{\circ},y))} \, dy.
\end{split}\end{align*}
Using the same argument as for the term $Y^{1}_{1}(k)$, we will obtain
\begin{align*}\begin{split}  
  Y^{2}_{1}(k)(x) &\lesssim  \abs{\nabla \phi_{i}(x)} k^{-2(j - 1)} \norm{v}_{\infty}
  \langle d(x_{i}^{\circ},x) \rangle^{2 - n_{i}} e^{-c k
    d(x_{i}^{\circ},x)} \\
  &\lesssim k^{-2(j - 1)} \norm{v}_{\infty} \omega_{1}(x,k),
\end{split}\end{align*}
where we used the fact that $\nabla \phi_{i}$ is compactly supported
to obtain the last line.

\vspace*{0.1in}

\underline{\textit{The Term $Y_{2}(k)$.}}

\vspace*{0.1in}

The kernel $\nabla_{x}
G_{2}(k)(x,y)$ is compactly supported in some neighbourhood of
$$
K_{\Delta} := \lb (x,x) \in \mathcal{M}^{2} : x \in K \rb.
$$
There must then exist some compact set $V$ containing $K$ such that
$\nabla_{x} G_{2}(k)(x,y)$ is supported in $V \xx V$. This implies that
$Y_{2}(k)$ must also be compactly supported in $V$. For $x \in V^{c}$,
the estimate
\begin{equation}
  \label{eqtn:Y2}
Y_{2}(k)(x) \lesssim k^{-2 (j-1)} \norm{v}_{\infty} \omega_{1}(x,k)
\end{equation}
will then be trivially satisfied. It remains to ascertain the validity
of this estimate for $x \in V$. The operators $\lb \nabla G_{2}(k)
\rb_{k \in (0,1)}$ constitute a family of pseudodifferential operators
of order $-1$. At the scale of an individual chart on $\mathcal{M}$,
it is evident that in local coordinates the symbol of $\nabla
G_{2}(k)$, denoted $a_{2}(k)(x,\xi)$, will satisfy
$$
\abs{\partial^{\alpha}_{\xi} a_{2}(k)(x,\xi)} \lesssim (\abs{\xi}^{2} +
k^{2})^{\frac{-1 - \abs{\alpha}}{2}}
$$
for all multi-indices $\alpha \geq 0$ and $k \in (0,1)$. From this,
standard pseudodifferential operator theory (c.f. {\cite[Sec.~0.2]{taylor}}
for instance) implies
$$
\abs{\nabla_{x} G_{2}(k)(x,y)} \lesssim d(x,y)^{1 - N},
$$
uniformly in $k \in (0,1)$.
This, in combination with Proposition \ref{prop:Lemma2.7Higher},
implies
\begin{align*}\begin{split}  
 Y_{2}(k)(x) &\lesssim k^{-2(j - 2)} \norm{v}_{\infty} \int_{V} d(x,y)^{1 - N} \langle
 d(x_{i}^{\circ},y) \rangle^{2 - n_{i}} \exp(- c k d(x_{i}^{\circ},y))
 \, dy \\
 &\lesssim k^{-2(j - 1)} \norm{v}_{\infty},
\end{split}\end{align*}
which completes the proof of \eqref{eqtn:Y2} for $x \in V$.

\vspace*{0.1in}

\underline{\textit{The Term $Y_{3}(k)$.}}

\vspace*{0.1in}

Recall from \cite{hs2019} that the kernel $\nabla_{x} G_{3}(k)(x,y)$
satisfies the estimate
$$
\abs{\nabla_{x} G_{3}(k)(x,y)} \lesssim \omega_{1}(x,k) \omega_{2}(y,k).
$$
Proposition \ref{prop:Lemma2.7Higher} then implies,
\begin{align*}\begin{split}  
 Y_{3}(k)(x) &= \int_{\mathcal{M}} \abs{\nabla_{x} G_{3}(k)(x,y)}
 \abs{(\Delta + k^{2})^{-(j - 1)}v(y)} \, dy \\
 &\lesssim k^{-2(j - 2)} \norm{v}_{\infty} \int_{\mathcal{M}} \omega_{1}(x,k)
 \omega_{2}(y,k) \omega_{2}(y,k) \, dy \\
 &= k^{-2(j - 2)} \norm{v}_{\infty} \omega_{1}(x,k) \int_{\mathcal{M}}
 \omega_{2}(y,k)^{2} \, dy.
\end{split}\end{align*}
Therefore, in order to prove the desired estimate for $Y_{3}(k)$, it
is sufficient to show that
$$
\int_{\mathcal{M}} \omega_{2}(y,k)^{2} \, dy \lesssim k^{-2}.
$$
Considering the integral over the center $K$ first,
$$
 \int_{K} \omega_{2}(y,k)^{2} \, dy \lesssim \int_{K} 1 \, dy \lesssim
 1 \leq k^{-2}.
$$
On the ends $\R^{n_{i}} \xx \mathcal{M}_{i} \setminus K_{i}$ for $1
\leq i \leq l$ we have, 
\begin{align*}\begin{split}  
 \int_{\R^{n_{i}} \xx \mathcal{M}_{i} \setminus K_{i}}
 \omega_{2}(y,k)^{2} \, dy &\lesssim \int_{\R^{n_{i}} \xx
   \mathcal{M}_{i} \setminus K_{i}} \langle d(x_{i}^{\circ},y)
 \rangle^{4 - 2 n_{i}} \exp (- 2 c k d(x_{i}^{\circ}, y)) \, dy \\
 &\lesssim \int^{\infty}_{0} \langle r \rangle^{4 - 2 n_{i}} \exp(- 2 c
 k r) r^{n_{i} - 1} \, dr \\
&\lesssim k^{-1} \leq k^{-2}.
 \end{split}\end{align*}

\vspace*{0.1in}

\underline{\textit{The Term $Y_{4}(k)$.}}

\vspace*{0.1in}

This can be handled in an
identical manner to the term $Y_{3}(k)$ since $\nabla_{x} G_{4}(k)(x,y)$
satisfies even stronger estimates than $\nabla_{x} G_{3}(k)(x,y)$
(c.f. {\cite[Sec.~3]{hs2019}}).

 \end{proof}

In \cite{hs2019}, the decomposition \eqref{eqtn:ResolventSplitting} allowed for the Riesz transform to be
separated into four corresponding components. The
$L^{p}$-boundedness of each part was then proved independently. In this
article, we will also
make use of this decomposition. Notice that
$$
\br{\Delta + k^{2}}^{-M} = \frac{(-1)^{M - 1}}{(M - 1)!}  \partial_{k^{2}}^{(M-1)} (\Delta + k^{2})^{-1}.
$$
On combining this with the splitting \eqref{eqtn:ResolventSplitting},
we have the relation
\begin{equation}
  \label{eqtn:HighResolventSplit}
  \br{\Delta + k^{2}}^{-M} = \sum_{i = 1}^{4} H_{i}^{(M)}(k),
\end{equation}
where $H_{i}^{(M)}(k) := \frac{(-1)^{M - 1}}{(M - 1)!}  \partial_{k^{2}}^{(M
  - 1)}
G_{i}(k)$. To simplify notation, when the
integer $M > 0$ is understood, the shorthand notation $H_{i}(k)$ will
be employed.  For the remainder of this section we will investigate various properties of these operators
and, in particular, obtain asymptotic estimates for the kernels of
$H_{3}(k)$ and $H_{4}(k)$.

\begin{prop} 
  \label{prop:H3}
  The kernel of the operator $H_{3}(k)$ satisfies
  $$
\abs{H_{3}(k)(x,y)} \lesssim k^{-2(M - 1)} \omega_{2}(x,k) \omega_{2}(y,k)
$$
and
$$
\abs{\nabla_{x} H_{3}(k)(x,y)} \lesssim k^{-2(M - 1)} \omega_{1}(x,k) \omega_{2}(y,k),
$$
for all $x, \, y \in \mathcal{M}$ and $0 < k \leq 1$.
\end{prop}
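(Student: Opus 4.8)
The plan is to expand $H_3(k)$ by the Leibniz rule and then invoke the one-factor estimates already established. Recall that
$$
G_3(k)(x,y) = \sum_{i=1}^{l} \br{\Delta_{\R^{n_i}\xx\mathcal{M}_i}+k^2}^{-1}(x_i^\circ,y)\, u_i(x,k)\, \phi_i(y),
$$
where $u_i$ is the function furnished by {\cite[Lem.~2.7]{hs2019}} for $v_i = -\Delta\phi_i$, so it satisfies the hypotheses of Propositions \ref{prop:Lemma2.7Higher} and \ref{prop:HigherOrder}. Since $\phi_i$ is independent of $k$, differentiating $M-1$ times in $k^2$ and using $\partial_{k^2}^{(m)}\br{\Delta_{\R^{n_i}\xx\mathcal{M}_i}+k^2}^{-1} = (-1)^m m!\,\br{\Delta_{\R^{n_i}\xx\mathcal{M}_i}+k^2}^{-(m+1)}$ together with the notation $u_i^{(p)} = \partial_{k^2}^{(p)}u_i$, one sees that $H_3(k)(x,y)$ is a finite linear combination, over $1\le i\le l$ and $m+p=M-1$, of terms of the form
$$
c_{m,p}\,\br{\Delta_{\R^{n_i}\xx\mathcal{M}_i}+k^2}^{-(m+1)}(x_i^\circ,y)\, u_i^{(p)}(x,k)\, \phi_i(y).
$$

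For the first estimate I would bound each such summand by applying Proposition \ref{prop:EndResolvent} with $j=m+1$ to the resolvent factor, giving $\abs{\br{\Delta_{\R^{n_i}\xx\mathcal{M}_i}+k^2}^{-(m+1)}(x_i^\circ,y)} \lesssim k^{-2m}\br{d(x_i^\circ,y)^{2-N}+d(x_i^\circ,y)^{2-n_i}}e^{-ckd(x_i^\circ,y)}$, and Proposition \ref{prop:Lemma2.7Higher} to the second factor, giving $\abs{u_i^{(p)}(x,k)} \lesssim k^{-2p}\norm{v_i}_\infty\omega_2(x,k)$. Because $\mathrm{supp}\,\phi_i\subset \R^{n_i}\xx\mathcal{M}_i\setminus K_i$ while $x_i^\circ$ lies in the interior of $K_i$, we have $d(x_i^\circ,y)\gtrsim 1$ whenever $\phi_i(y)\ne 0$, so on that set $d(x_i^\circ,y)^{2-N}+d(x_i^\circ,y)^{2-n_i}\simeq \langle d(x_i^\circ,y)\rangle^{-(n_i-2)}$ and the product of this with $e^{-ckd(x_i^\circ,y)}$ is $\lesssim \omega_2(y,k)$ (allowing $c$ to decrease). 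Multiplying the two factors, each summand is $\lesssim k^{-2m-2p}\norm{v_i}_\infty\omega_2(x,k)\omega_2(y,k) = k^{-2(M-1)}\norm{v_i}_\infty\omega_2(x,k)\omega_2(y,k)$; summing the finitely many summands and absorbing the fixed constants $\norm{v_i}_\infty$ yields the claimed bound on $\abs{H_3(k)(x,y)}$. For the gradient estimate, in each summand only $u_i^{(p)}(x,k)$ depends on $x$, so $\nabla_x$ falls entirely on that factor; repeating the argument with the bound $\abs{\nabla u_i^{(p)}(x,k)} = \abs{\nabla(\Delta+k^2)^{-(p+1)}v_i(x)} \lesssim k^{-2p}\norm{v_i}_\infty\omega_1(x,k)$ from Proposition \ref{prop:HigherOrder} in place of the bound for $\abs{u_i^{(p)}}$ gives $\abs{\nabla_x H_3(k)(x,y)} \lesssim k^{-2(M-1)}\omega_1(x,k)\omega_2(y,k)$.

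I expect no genuine analytic obstacle here: the argument is essentially bookkeeping built on Propositions \ref{prop:Lemma2.7Higher}, \ref{prop:HigherOrder} and \ref{prop:EndResolvent}. The two points deserving a little care are (i) matching the polynomial factor $d(x_i^\circ,y)^{2-N}+d(x_i^\circ,y)^{2-n_i}$ produced by the end-resolvent estimate with the weight $\omega_2(y,k)$, which is precisely where the support condition on $\phi_i$ (keeping $d(x_i^\circ,y)$ bounded away from $0$) enters, and (ii) checking that the powers of $k$ from the two Leibniz factors combine to exactly $k^{-2(M-1)}$ for every splitting $m+p=M-1$.
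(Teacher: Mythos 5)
Your proof is correct and follows essentially the same route as the paper: a Leibniz expansion of $\partial_{k^2}^{(M-1)}G_3(k)$ into rank-one terms, then Proposition~\ref{prop:Lemma2.7Higher} (resp.~Proposition~\ref{prop:HigherOrder}) for the $u_i^{(p)}$ factor and Proposition~\ref{prop:EndResolvent} combined with the support condition on $\phi_i$ for the end-resolvent factor, with the powers of $k$ summing to $k^{-2(M-1)}$ for every splitting. No substantive difference from the paper's argument.
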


\begin{proof}  
  We have
  \begin{align*}\begin{split}  
 &H_{3}(k)(x,y) = \frac{(-1)^{M - 1}}{(M - 1)!} \partial^{(M - 1)}_{k^{2}}
 G_{3}(k)(x,y) \\
 & \quad = \frac{(-1)^{M - 1}}{(M - 1)!} \partial^{(M - 1)}_{k^{2}} \sum_{i = 1}^{l}
 \br{\Delta_{\R^{n_{i}} \xx \mathcal{M}_{i}} +
   k^{2}}^{-1}(x_{i}^{\circ}, y) u_{i}(x,k) \phi_{i}(y) \\
 & \quad = \frac{(-1)^{M - 1}}{(M - 1)!} \sum_{i = 1}^{l} \sum_{j = 0}^{M - 1} \br{\begin{array}{c} 
 M - 1 \\ j
 \end{array}} \partial^{(j)}_{k^{2}} (\Delta_{\R^{n_{i}} \xx
   \mathcal{M}_{i}} + k^{2})^{-1}(x_{i}^{\circ}, y) \cdot u_{i}^{(M - 1 -
 j)}(x,k) \phi_{i}(y) \\
& \quad =   \sum_{i = 1}^{l} \sum_{j = 0}^{M - 1} \frac{(-1)^{M + j -1}}{(M - j - 1)!}  (\Delta_{\R^{n_{i}} \xx
 \mathcal{M}_{i}} + k^{2})^{-(j+1)}(x_{i}^{\circ},y) u_{i}^{(M - 1 - j)}(x,k) \phi_{i}(y).
\end{split}\end{align*}
Proposition \ref{prop:Lemma2.7Higher} then implies that
\begin{align*}\begin{split}  
 \abs{H_{3}(k)(x,y)} &\lesssim \sum_{i =1}^{l} \sum_{j = 0}^{M - 1}
 \br{\Delta_{\R^{n_{i}} \xx \mathcal{M}_{i}} +
   k^{2}}^{-(j+1)}(x_{i}^{\circ},y) \abs{u_{i}^{(M - 1 - j)}(x,k)}
 \phi_{i}(y) \\
 &\lesssim \omega_{2}(x,k)  \sum_{j = 0}^{M - 1}
 k^{-2(M - 1 - j)} \sum_{i = 1}^{l} \br{\Delta_{\R^{n_{i}} \xx \mathcal{M}_{i}} +
   k^{2}}^{-(j + 1)}(x_{i}^{\circ},y) \phi_{i}(y).
\end{split}\end{align*}
Proposition \ref{prop:EndResolvent}, when combined with the fact that
$ \mathrm{supp} \ \phi_{i} \subset \R^{n_{i}} \xx \mathcal{M}_{i}
\setminus K_{i}$ and $x_{i}^{\circ}$ is in the interior of $K_{i}$,
leads to
$$
\abs{H_{3}(k)(x,y)} \lesssim \omega_{2}(x,k) \omega_{2}(y,k) \sum_{j =
0}^{M - 1} k^{-2(M - 1 - j)} k^{-2 j} \simeq k^{-2(M - 1)} \omega_{2}(x,k) \omega_{2}(y,k),
$$
which proves our claim. The estimate for $\nabla_{x} H_{3}(k)(x,y)$
follows via an identical argument, except the use of Proposition
\ref{prop:Lemma2.7Higher} must be replaced with Proposition \ref{prop:HigherOrder}.
\end{proof}

\begin{lem} 
 \label{lem:ErrorTerm} 
 For any $j \in \N$, all $0 < k \leq 1$ and $y \in \mathcal{M}$,
 $$
\norm{\partial_{k^{2}}^{(j)} E(k) (\cdot, y)}_{\infty} \lesssim k^{-2
  j} \omega_{1}(y,k).
 $$
\end{lem}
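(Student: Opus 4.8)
The plan is to differentiate the decomposition \eqref{eqtn:ErrorSplitting}, $E(k)=\sum_{i=1}^{l}(E_1^i(k)+E_2^i(k))+E_3(k)$, summand by summand, $j$ times in $k^2$. The crucial simplification is that the cut-offs $\phi_i$, the potentials $v_i=-\Delta\phi_i$, and the Dirac masses $\delta_y$ carry no $k$-dependence, so $\partial_{k^2}^{(j)}$ lands only on the end-resolvent kernels $(\Delta_{\R^{n_i}\xx\mathcal{M}_i}+k^2)^{-1}$ and on the interior parametrix $G_{int}(k)$. For the end resolvents I would use $\partial_{k^2}^{(j)}(\Delta_{\R^{n_i}\xx\mathcal{M}_i}+k^2)^{-1}=(-1)^j j!\,(\Delta_{\R^{n_i}\xx\mathcal{M}_i}+k^2)^{-(j+1)}$ together with Proposition \ref{prop:EndResolvent} read with $j+1$ in place of $j$ (this conveniently avoids the logarithmic cases of Corollaries \ref{cor:EndResolvent} and \ref{cor:GradEndResolvent}), which gives $\abs{\partial_{k^2}^{(j)}(\Delta_{\R^{n_i}\xx\mathcal{M}_i}+k^2)^{-1}(x,y)}\lesssim k^{-2j}\br{d(x,y)^{2-N}+d(x,y)^{2-n_i}}e^{-ckd(x,y)}$ and the analogous estimate with $\nabla_x$ in front and exponents $1-N$, $1-n_i$. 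For $G_{int}(k)$, a classical order $-2$ parametrix whose kernel sits in a fixed compact neighbourhood of $K_\Delta$, the kernels $\partial_{k^2}^{(j)}G_{int}(k)(x,y)$ and $\partial_{k^2}^{(j)}\nabla_x G_{int}(k)(x,y)$ are bounded uniformly for $k\in(0,1)$ on their support, a fortiori by $k^{-2j}$ times the $j=0$ bounds.

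I would then estimate the three families separately, using everywhere that $\partial_{k^2}$ commutes with $\nabla_x$ and with subtraction, so that the cancellation of the exact end-resolvent against $G_{int}(k)$ near the diagonal — the whole point of $E_1^i$ and $E_2^i$ — is inherited by every $k^2$-derivative. For $E_1^i$: on the support of $\nabla\phi_i(x)\phi_i(y)$ the parametrix coincides with $(\Delta_{\R^{n_i}\xx\mathcal{M}_i}+k^2)^{-1}$ near the diagonal, so $\partial_{k^2}^{(j)}E_1^i(k)(\cdot,y)$ is supported away from it and is there $\lesssim k^{-2j}\abs{\nabla\phi_i(x)}\phi_i(y)\br{d(x,y)^{1-n_i}+d(x,y)^{1-N}}e^{-ckd(x,y)}$ (plus a $\partial_{k^2}^{(j)}\nabla_x G_{int}(k)$ contribution bounded by $k^{-2j}$ on a compact set where $\omega_1\simeq1$); since $x$ is confined to the compact set $\mathrm{supp}\,\nabla\phi_i\subset E_i$ we have $d(x,y)\simeq d(x_i^\circ,y)$ once $d(x_i^\circ,y)$ is large, so the supremum over $x$ is $\lesssim k^{-2j}\langle d(x_i^\circ,y)\rangle^{1-n_i}e^{-ckd(x_i^\circ,y)}=k^{-2j}\omega_1(y,k)$, and for $y$ in a compact set $\omega_1(y,k)\simeq1$ makes the bound trivial. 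For $E_2^i$, whose $x$-support is the compact set $\mathrm{supp}\,v_i\subset E_i$: near the diagonal the first term cancels $G_{int}(k)$ and the bracket reduces to $(\Delta_{\R^{n_i}\xx\mathcal{M}_i}+k^2)^{-1}(x_i^\circ,y)$ with $y$ then in a compact set, so its $k^2$-derivatives are $\lesssim k^{-2j}$; once $d(x_i^\circ,y)$ is large, $G_{int}(k)(x,y)=0$ and the bracket equals $(\Delta_{\R^{n_i}\xx\mathcal{M}_i}+k^2)^{-1}(x_i^\circ,y)-(\Delta_{\R^{n_i}\xx\mathcal{M}_i}+k^2)^{-1}(x,y)$, which I would write as the line integral of $\nabla_z(\Delta_{\R^{n_i}\xx\mathcal{M}_i}+k^2)^{-1}(z,y)$ along a path from $x$ to $x_i^\circ$ lying in a bounded region; differentiating in $k^2$ and using the gradient estimate of Proposition \ref{prop:EndResolvent} gives $\lesssim k^{-2j}\langle d(x_i^\circ,y)\rangle^{1-n_i}e^{-ckd(x_i^\circ,y)}=k^{-2j}\omega_1(y,k)$. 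Multiplying by $\abs{v_i(x)}\phi_i(y)\lesssim1$ and taking the supremum over $x$ settles $E_2^i$.

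Finally, for $E_3$ the parametrix error $(\Delta+k^2)G_{int}(k)(x,y)-\delta_y(x)$ is a smooth kernel compactly supported in $\mathcal{M}^2$, so $E_3(k)$ and all its $k^2$-derivatives are supported in a fixed compact set and bounded by $k^{-2j}$ uniformly in $k\in(0,1)$; on that compact $y$-support $\omega_1(y,k)\simeq1$, and summing the three estimates over $i=1,\dots,l$ yields $\norm{\partial_{k^2}^{(j)}E(k)(\cdot,y)}_\infty\lesssim k^{-2j}\omega_1(y,k)$. The step I expect to be the main obstacle is the bookkeeping around $G_{int}(k)$: one must verify both that its cancellation with the exact end-resolvent near the diagonal survives differentiation in $k^2$, and that $\partial_{k^2}^{(j)}G_{int}(k)$ and its gradient obey the compactly-supported bounds used above. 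The first point is automatic, since $\partial_{k^2}$ commutes with $\nabla_x$ and with subtraction; the second follows from the explicit parametrix construction in \cite{hs2019}, equivalently from symbol calculus, differentiating an order $-2$ symbol in $k^2$ only improving the order. Beyond this, the argument is the mechanical reduction of each $k^2$-derivative to a higher-order end-resolvent via $\partial_{k^2}^{(j)}(\Delta_{\R^{n_i}\xx\mathcal{M}_i}+k^2)^{-1}=(-1)^j j!(\Delta_{\R^{n_i}\xx\mathcal{M}_i}+k^2)^{-(j+1)}$ and Proposition \ref{prop:EndResolvent}, exactly as in the $j=0$ argument of \cite{hs2019}.
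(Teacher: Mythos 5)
Your proposal follows the paper's own argument essentially line for line: you differentiate the decomposition \eqref{eqtn:ErrorSplitting} term by term, observe that only the end-resolvent and $G_{int}(k)$ carry $k$-dependence so the cancellation near the diagonal survives $\partial_{k^2}^{(j)}$, convert $\partial_{k^2}^{(j)}$ of the first-order end-resolvent into $(-1)^j j!$ times the $(j+1)$-order one and invoke Proposition \ref{prop:EndResolvent}, control $E_2^i$ via the mean value theorem (your line-integral phrasing is the same device), and bound the parametrix contribution by pseudodifferential symbol calculus. The only small imprecision is the phrase that $\partial_{k^2}^{(j)}G_{int}(k)$ and its gradient are ``bounded uniformly for $k\in(0,1)$ on their support'': they retain a diagonal singularity, so what one actually has (and what the paper proves, via the symbol estimate $|\partial_\xi^\alpha a_{int}(k)(x,\xi)|\lesssim(|\xi|^2+k^2)^{(-1-2j-|\alpha|)/2}$) is $|\nabla_x\partial_{k^2}^{(j)}G_{int}(k)(x,y)|\lesssim k^{-2j}d(x,y)^{1-N}$, which is all that is needed since these terms are only evaluated away from the diagonal.
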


\begin{proof}
  This lemma has already been proved for the case $j = 0$ in
  \cite{hs2019}, and so it can be assumed that $j > 0$.
From the splitting \eqref{eqtn:ErrorSplitting}, it is 
sufficient to prove this bound for the kernels of each of the components $E_{1}^{i}(k)$,
$E_{2}^{i}(k)$ for $i = 1, \cdots, l$ and $E_{3}(k)$.

\vspace*{0.1in}

\underline{\textit{The Error Term $E^{i}_{1}(k)$.}}

\vspace*{0.1in}

On differentiating the expression for $E^{i}_{1}$ with respect to $k^{2}$,
$$
 \partial^{(j)}_{k^{2}} E^{i}_{1}(k)(x,y) = - 2 \nabla \phi_{i}(x)
 \phi_{i}(y) \brs{(-1)^{j} j! \nabla_{x} \br{\Delta_{\R^{n_{i}} \xx
       \mathcal{M}_{i}} + k^{2}}^{-(j + 1)}(x,y) - \nabla_{x}
   \partial^{(j)}_{k^{2}} G_{int}(k)(x,y)}.
 $$
 $G_{int}(k)$ agrees with the resolvent $\br{\Delta_{\R^{n_{i}}
     \xx \mathcal{M}_{i}} + k^{2}}^{-1}$ near the diagonal and on the support of $\nabla \phi_{i}(x) \cdot
 \phi_{i}(y)$. Therefore $E_{1}^{i}$ will vanish on this set.
 
For
 $(x,y)$ away from the diagonal and on the support of $\nabla
 \phi_{i}(x) \phi_{i}(y)$, we have
   \begin{align}\begin{split}
          \label{eqtn:Error}
 \abs{\partial^{(j)}_{k^{2}} E^{i}_{1}(k)(x,y)}
 &\lesssim \abs{\nabla_{x} \br{\Delta_{\R^{n_{i}} \xx
       \mathcal{M}_{i}} + k^{2}}^{-(j + 1)}(x, y)} +
 \abs{\nabla_{x} \partial^{(j)}_{k^{2}} G_{int}(k)(x,y)}
 \\
 &\lesssim k^{-2 j} \omega_{1}(y,k) + \abs{\nabla_{x}
   \partial^{(j)}_{k^{2}} G_{int}(k)(x,y)},
\end{split}\end{align}
where the last line follows from Proposition
\ref{prop:EndResolvent}.
The operators $\lb \nabla
\partial^{(j)}_{k^{2}} G_{int}(k) \rb_{k \in (0,1)}$ constitute a
family of pseudodifferential operators of order $-1-2 j$. At the scale of
an individual chart on $\mathcal{M}$, it is evident that in local
coordinates the symbol of $\nabla \partial^{(j)}_{k^{2}} G_{int}(k)$,
denoted $a_{int}(k)(x,\xi)$, will satisfy
$$
\abs{\partial^{\alpha}_{\xi} a_{int}(k)(x,\xi)} \lesssim
(\abs{\xi}^{2} + k^{2})^{\frac{-1 - 2 j - \abs{\alpha}}{2}}
$$
for all multi-indices $\alpha \geq 0$ and $k \in (0,1)$. From this,
standard pseudodifferential operator theory
(c.f. {\cite[Sec.~0.2]{taylor}}) implies that for any $b > -1 - 2 j + N$,
$$
\abs{\nabla_{x} \partial^{(j)}_{k^{2}}G_{int}(k)(x,y)} \lesssim k^{N
  -1 - 2 j - b} d(x,y)^{-b}
$$
uniformly in $k \in (0,1)$, for all $x, \, y \in \mathcal{M}$. Setting $b = N - 1$ gives
  $$
\abs{\nabla_{x} \partial^{(j)}_{k^{2}}G_{int}(k)(x,y)} \lesssim
k^{-2 j} d(x,y)^{1 - N}
$$
uniformly in $k \in (0,1)$, for all $x, \, y \in \mathcal{M}$. As we
are considering $(x,y)$ away from the diagonal and
$G_{int}(k)(x,y)$ is compactly supported in $\mathcal{M}^{2}$, this estimate implies
\begin{align*}\begin{split}  
 \abs{\nabla_{x} \partial^{(j)}_{k^{2}} G_{int}(k)(x,y)} &\lesssim
 k^{-2 j} \langle d(x,y) \rangle^{1 - N}
 \\
 &\lesssim k^{-2 j} \omega_{1}(y,k),
\end{split}\end{align*}
thereby implying our desired estimate.

\vspace*{0.1in}

\underline{\textit{The Error Term $E^{i}_{2}(k)$.}}

\vspace*{0.1in}

For the second term,
\begin{align*}\begin{split}  
 \partial^{(j)}_{k^{2}} E^{i}_{2}(k)(x,y)&= \phi_{i}(y) v_{i}(x)
 \left[ (-1)^{j} j! \br{\Delta_{\R^{n_{i}} \xx \mathcal{M}_{i}} +
     k^{2}}^{-(j + 1)}(x,y) + \partial^{(j)}_{k^{2}} G_{int}(k)(x,y) \right.
   \\
   & \qquad  \left. + (-1)^{j} j! \br{\Delta_{\R^{n_{i}} \xx
         \mathcal{M}_{i}} + k^{2}}^{-(j + 1)}(x_{i}^{\circ},y) \right].
 \end{split}\end{align*}
The first two terms will vanish on the diagonal and
on the support of $\phi_{i}(y) v_{i}(x)$, since this is where
$G_{int}(k)$ coincides with the resolvent. The desired estimate would
then follow from Proposition \ref{prop:EndResolvent}.

For $(x,y)$ away from the diagonal and on the support of $\phi_{i}(y) v_{i}(x)$, 
\begin{align*}\begin{split}  
 \abs{\partial_{k^{2}}^{(j)} E^{i}_{2}(k)(x,y)} &\lesssim
\abs{\br{\Delta_{\R^{n_{i}} \xx \mathcal{M}_{i}} +
    k^{2}}^{-(j + 1)}(x_{i}^{\circ},y) - \br{\Delta_{\R^{n_{i}} \xx
      \mathcal{M}_{i}} + k^{2}}^{-(j + 1)}(x,y)} \\ & \qquad \qquad + \abs{\partial^{(j)}_{k^{2}}
  G_{int}(k)(x,y)} \\
&\lesssim \abs{x_{i}^{\circ} - x} \abs{\nabla_{x}
  \br{\Delta_{\R^{n_{i}} \xx \mathcal{M}_{i}} + k^{2}}^{-(j + 1)}(\tilde{x},y)} + \abs{\partial^{(j)}_{k^{2}}
  G_{int}(k)(x,y)}  \\
 &\lesssim k^{-2 j} \omega_{1}(y,k) + \abs{\partial^{(j)}_{k^{2}}
  G_{int}(k)(x,y)},
\end{split}\end{align*}
for some $\tilde{x}$ close to the set $K_{i}$, 
where the last line follows from Proposition
\ref{prop:EndResolvent}. Similar pseudodifferential reasoning as for
the term $E_{1}^{i}(k)$ then implies $\abs{\partial^{(j)}_{k^{2}} G_{int}(k)(x,y)}
\lesssim k^{-2 j} \omega_{1}(y,k)$, thereby proving the desired estimate.

\vspace*{0.1in}

\underline{\textit{The Error Term $E_{3}(k)$.}}

\vspace*{0.1in}

Finally, from the form of $\partial_{k^{2}}^{(j)}
E_{3}(k)$, it is clear that in an analogous manner to the previous two
terms, pseudodifferential reasoning can
be applied to obtain the estimate
$\norm{\partial^{(j)}_{k^{2}}E_{3}(k)(\cdot,y)}_{\infty} \lesssim k^{-2 j} \omega_{1}(y,k)$.
 \end{proof}

\begin{prop} 
  \label{prop:H4}
    The kernel of the operator $H_{4}(k)$ satisfies
 $$
\abs{H_{4}(k)(x,y)} \lesssim k^{-2(M - 1)} \omega_{2}(x,k) \omega_{1}(y,k)
$$
and
$$
\abs{\nabla_{x} H_{4}(k)(x,y)} \lesssim k^{-2(M - 1)} \omega_{1}(x,k) \omega_{1}(y,k)
$$
for all $x, \, y \in \mathcal{M}$ and $0 < k \leq 1$.
\end{prop}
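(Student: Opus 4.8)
The plan is to reduce $H_4(k)=\frac{(-1)^{M-1}}{(M-1)!}\partial_{k^2}^{(M-1)}G_4(k)$ to the higher-order resolvent acting on derivatives of the error term $E(k)$, and then quote Lemma~\ref{lem:ErrorTerm} together with Propositions~\ref{prop:Lemma2.7Higher} and \ref{prop:HigherOrder}. Since $G_4(k)(x,y)=-(\Delta+k^2)^{-1}v_y(x)$ with $v_y=E(k)(\cdot,y)$, and both $(\Delta+k^2)^{-1}$ and $E(k)$ depend smoothly on $k^2$ for $k>0$, the Leibniz rule in the variable $k^2$ (justified exactly as in the proof of Proposition~\ref{prop:Lemma2.7Higher}), combined with $\partial_{k^2}^{(m)}(\Delta+k^2)^{-1}=(-1)^m m!\,(\Delta+k^2)^{-(m+1)}$, gives
$$
H_4(k)(x,y)=\sum_{j=0}^{M-1} c_{M,j}\,(\Delta+k^2)^{-(M-j)}\br{\partial_{k^2}^{(j)}E(k)(\cdot,y)}(x)
$$
for constants $c_{M,j}$ depending only on $M$ and $j$.

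Next I would observe that each $g_{j,y}:=\partial_{k^2}^{(j)}E(k)(\cdot,y)$ is bounded and supported, as a function of $x$, in a fixed compact set independent of $k$ and $y$: from the splitting \eqref{eqtn:ErrorSplitting}, $E_1^i(k)(\cdot,y)$ and $E_2^i(k)(\cdot,y)$ are supported in $\mathrm{supp}\,\nabla\phi_i$ and $\mathrm{supp}\,v_i$ respectively, while $E_3(k)(\cdot,y)$ is supported where the (compactly supported, smooth) parametrix error of $G_{int}(k)$ lives. Lemma~\ref{lem:ErrorTerm} supplies $\norm{g_{j,y}}_\infty\lesssim k^{-2j}\,\omega_1(y,k)$. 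Applying Proposition~\ref{prop:Lemma2.7Higher} in the form $\abs{(\Delta+k^2)^{-m}g(x)}\lesssim k^{-2(m-1)}\norm{g}_\infty\,\omega_2(x,k)$ with $m=M-j$, the $j$-th summand is at most $k^{-2(M-j-1)}\,k^{-2j}\,\omega_1(y,k)\,\omega_2(x,k)=k^{-2(M-1)}\,\omega_2(x,k)\,\omega_1(y,k)$; summing the finitely many $j$ yields the first estimate. The gradient estimate follows in the same way, with Proposition~\ref{prop:HigherOrder} (which gives $\abs{\nabla_x(\Delta+k^2)^{-m}g(x)}\lesssim k^{-2(m-1)}\norm{g}_\infty\,\omega_1(x,k)$) replacing Proposition~\ref{prop:Lemma2.7Higher}, so that $\omega_1(x,k)$ appears in place of $\omega_2(x,k)$.

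The only point demanding care is that $g_{j,y}$ is supported in a compact set slightly larger than the center $K$, whereas Propositions~\ref{prop:Lemma2.7Higher} and \ref{prop:HigherOrder} (and the lemma of \cite{hs2019} underlying them) are phrased for data supported in $K$. This is not a genuine obstruction, since those results use only that the data is bounded with fixed compact support — indeed they are already invoked in exactly this generality, for instance for the functions $u_i$ occurring in $G_3$ — so they apply here verbatim. Should one wish to avoid relying on this, an alternative is to split $g_{j,y}$ by a fixed partition of unity into a piece supported in $K$ and finitely many pieces supported in collar neighbourhoods of the ends, estimating the latter directly via the decomposition \eqref{eqtn:HighResolventSplit} and the end resolvent estimates of Section~\ref{sec:Preliminaries}.
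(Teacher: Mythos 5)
Your proposal is correct and follows essentially the same route as the paper: expand $\partial_{k^2}^{(M-1)}G_4(k)$ by the Leibniz rule into terms of the form $(\Delta+k^2)^{-(j+1)}v_y^{(M-1-j)}$, bound $\|v_y^{(m)}\|_\infty$ by Lemma~\ref{lem:ErrorTerm}, and apply Propositions~\ref{prop:Lemma2.7Higher} and \ref{prop:HigherOrder}; the paper carries out the Leibniz expansion through the integral representation $\int_0^\infty t^j e^{-tk^2}e^{-t\Delta}v_y^{(M-1-j)}\,dt$ and then invokes \eqref{eqtn:jthDerivative}, which is the same computation. Your final remark on the support of $E(k)(\cdot,y)$ versus the literal hypothesis ``supported in $K$'' flags a technicality the paper passes over silently, and your resolution of it is sound.
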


\begin{proof}
  Recall that
  \begin{align*}\begin{split}  
      G_{4}(k)(x,y) &= - (\Delta + k^{2})^{-1}v_{y}(x) \\
      &= - \int^{\infty}_{0} e^{-t k^{2}} e^{-t \Delta}v_{y}(x) \, dt,
    \end{split}\end{align*}
  where $v_{y}(x) := E(k)(x,y)$.
  If we define $v^{(j)}_{y} := \partial_{k^{2}}^{(j)} v_{y}$ for $j
  \in \N$, then
  Lemma \ref{lem:ErrorTerm} implies
  $$
\norm{v^{(j)}_{y}}_{\infty} =
\norm{\partial_{k^{2}}^{(j)}E(k)(\cdot,y)}_{\infty} \lesssim k^{-2 j} \omega_{1}(y,k).
$$
Therefore $e^{-t \Delta} v_{y}^{(j)}$ is well-defined and we have by
the dominated convergence theorem
$$
\partial_{k^{2}}^{(j)} e^{-t \Delta} v_{y}(x) = e^{-t \Delta} v_{y}^{(j)}(x).
$$
Thus
\begin{align*}\begin{split}
    \label{eqtn:H4Identity}
 \partial_{k^{2}}^{(M - 1)} G_{4}(k)(x,y) &= \partial_{k^{2}}^{(M -
   1)} \int^{\infty}_{0} e^{-t k^{2}} e^{-t \Delta}v_{y}(x) \, dt \\
 &= \int^{\infty}_{0} \partial^{(M - 1)}_{k^{2}} \br{e^{-t k^{2}}
   e^{-t \Delta}v_{y}(x)} \, dt \\
 &= \sum_{j = 0}^{M - 1} \br{ \begin{array}{c} M - 1 \\ j \end{array}}
(-1)^{j} \int^{\infty}_{0} t^{j} e^{-t k^{2}}  e^{-t \Delta}v_{y}^{(M - 1 -
   j)}(x) \, dt.
\end{split}\end{align*}
The integral identity \eqref{eqtn:jthDerivative} then implies
\begin{equation}
  \label{eqtn:H4Identity}
 \partial_{k^{2}}^{(M - 1)} G_{4}(k)(x,y) =  \sum_{j = 0}^{M - 1} \br{ \begin{array}{c} M - 1 \\ j \end{array}}
 (\Delta + k^{2})^{-(j+ 1)}v_{y}^{(M - 1 - j)}(x).
  \end{equation}
An application of Proposition \ref{prop:Lemma2.7Higher} leads to,
\begin{align*}\begin{split}  
 \abs{\partial_{k^{2}}^{(M - 1)} G_{4}(k)(x,y)} &\lesssim \sum_{j = 0}^{M -
   1} k^{-2 j} \omega_{2}(x,k) \norm{v_{y}^{(M - 1 - j)}}_{\infty} \\
 &\lesssim \omega_{2}(x,k) \omega_{1}(y,k) \sum_{j = 0}^{M - 1} k^{-2
   j} k^{-2 (M - 1 - j)} \\
 &\simeq k^{-2(M - 1)} \omega_{2}(x,k) \omega_{1}(y,k).
\end{split}\end{align*}
The definition $H_{4}(k) := \frac{(-1)^{M - 1}}{(M - 1)!}
\partial_{k^{2}}^{(M - 1)} G_{4}(k)$ then allows us to immediately
obtain
$$
\abs{H_{4}(k)(x,y)} \lesssim k^{-2(M - 1)} \omega_{2}(x,k) \omega_{1}(y,k).
$$
Estimates for $\abs{\nabla_{x} H_{4}(k)(x,y)}$ follow in an identical
manner from \eqref{eqtn:H4Identity}, except the use of Proposition
\ref{prop:Lemma2.7Higher} must be replaced with an application of
Proposition \ref{prop:HigherOrder}.
 \end{proof}

 \section{The Low Energy Square Function}
 \label{sec:Low}

 Recall from Section \ref{sec:Resolvent} that the higher-order
 resolvent has the splitting
 $$
(\Delta + k^{2})^{-M} = \sum_{i = 1}^{4} H_{i}(k),
$$
where $H_{i}(k) := \frac{(-1)^{M - 1}}{(M - 1)!}
\partial_{k^{2}}^{(M - 1)} G_{i}(k)$. The low energy square function can
then be controlled from above by
$$
S_{<}f(x) \lesssim \sum_{i = 1}^{4} S_{<}^{i}f(x),
$$
where $S_{<}^{i}$ is the part of the low energy square function corresponding to $H_{i}(k)$,
$$
S_{<}^{i}f(x) := \br{\int^{1}_{0} \abs{\nabla H_{i}(k) f(x)}^{2} k^{4
    M - 3} \, dk}^{\frac{1}{2}}.
$$
The $L^{p}$-boundedness for $p \in (1,n_{min})$ and weak-type $(1,1)$ property of $S_{<}$ will be proved by demonstrating that each
component $S_{<}^{i}$ is itself bounded on $L^{p}$ and weak-type $(1,1)$ for $i
= 1, \cdots, 4$.

\subsection{The Operator $S_{<}^{1}$}
\label{sec:S1}

From the definition of $H_{1}(k)$, we have
\begin{align*}\begin{split}  
H_{1}(k)(x,y)  &= \frac{(-1)^{M - 1}}{(M - 1)!} \partial^{(M - 1)}_{k^{2}}
\sum_{i = 1}^{l} \br{\Delta_{\R^{n_{i}} \xx \mathcal{M}_{i}} +
  k^{2}}^{-1}(x,y) \phi_{i}(x) \phi_{i}(y) \\
&= \sum_{i = 1}^{l} (\Delta_{\R^{n_{i}} \xx \mathcal{M}_{i}} +
k^{2})^{-M}(x,y) \phi_{i}(x) \phi_{i}(y).
\end{split}\end{align*}
As $H_{1}(k)$ consists of finitely many terms, in order to prove the
boundedness of the operator $S^{1}_{<}$, we need only prove
boundedness of the operators
$$
S^{1,i}_{<}f(x) := \br{\int^{1}_{0} \abs{\int_{\mathcal{M}} \nabla_{x}
  \brs{(\Delta_{\R^{n_{i}} \xx \mathcal{M}_{i}} + k^{2})^{-M}(x,y)
    \phi_{i}(x) \phi_{i}(y)} f(y) \, dy}^{2} k^{4 M - 3} \, dk}^{\frac{1}{2}}
$$
for all $1 \leq i \leq l$. From the product rule, this operator is in turn
controlled by
$$
S^{1,i}_{<}f(x) \leq \Lambda^{i}f(x) + \Pi^{i}f(x),
$$
where
$$
\Lambda^{i}f(x) := \br{\int^{1}_{0} \abs{\int_{\mathcal{M}} \nabla_{x}
  (\Delta_{\R^{n_{i}} \xx \mathcal{M}_{i}} + k^{2})^{-M}(x,y)
  \phi_{i}(x) \phi_{i}(y) f(y) \, dy}^{2} k^{4 M - 3} \, dk}^{\frac{1}{2}}
$$
and
$$
\Pi^{i}f(x) := \br{\int^{1}_{0} \abs{\int_{\mathcal{M}} 
  (\Delta_{\R^{n_{i}} \xx \mathcal{M}_{i}} + k^{2})^{-M}(x,y)
  \nabla \phi_{i}(x) \phi_{i}(y) f(y) \, dy}^{2} k^{4 M - 3} \, dk}^{\frac{1}{2}}.
$$
First consider the operator $\Lambda^{i}$. 
It is well-known from classical theory that the square function
$$
\br{\int^{\infty}_{0} \abs{\nabla (\Delta_{\R^{n_{i}} \xx
      \mathcal{M}_{i}} + k^{2})^{-M}g(x)}^{2} k^{4 M -
    3} \, dk}^{\frac{1}{2}} = \br{\int^{\infty}_{0} \abs{t \nabla (t^{2}\Delta_{\R^{n_{i}} \xx
      \mathcal{M}_{i}} + I)^{-M}g(x)}^{2} \, \frac{dt}{t}}^{\frac{1}{2}}
$$
is bounded on $L^{p}(\R^{n_{i}} \xx \mathcal{M}_{i})$ for all $p \in
(1,\infty)$ and weak-type $(1,1)$. Therefore,
\begin{align*}\begin{split}  
 \norm{\Lambda_{i}f}_{p} &= \norm{\phi_{i} \cdot \br{\int^{1}_{0} \abs{\nabla
  (\Delta_{\R^{n_{i}} \xx \mathcal{M}_{i}} + k^{2})^{-M} (\phi_{i}
  \cdot f)}^{2} k^{4 M - 3} \, dk}^{\frac{1}{2}}}_{p} \\
&\leq  \norm{\br{\int^{\infty}_{0} \abs{\nabla
  (\Delta_{\R^{n_{i}} \xx \mathcal{M}_{i}} + k^{2})^{-M} (\phi_{i}
  \cdot f)}^{2} k^{4 M - 3} \, dk}^{\frac{1}{2}}}_{p} \\
&\lesssim \norm{f}_{p}
\end{split}\end{align*}
for any $p \in (1,\infty)$. Weak-type $(1,1)$ bounds for $\Lambda_{i}$
follow in the same manner. 

Next, consider the operator $\Pi^{i}$. For $r > 0$, define the set $D_{r} := \lb (x,y) \in \mathcal{M}^{2} :
d(x,y) \leq r \rb$.
 Minkowski's integral inequality
implies that
\begin{align*}\begin{split}  
 \Pi^{i}f(x) &\leq \int_{\mathcal{M}} \br{\int^{1}_{0}
   \abs{(\Delta_{\R^{n_{i}} \xx \mathcal{M}_{i}} + k^{2})^{-M}(x,y)
     \nabla \phi_{i}(x) \phi_{i}(y)}^{2} k^{4 M - 3} \,
   dk}^{\frac{1}{2}} \abs{f(y)} \, dy \\
 &= \int_{\mathcal{M}} \abs{\nabla \phi_{i}(x)} \br{\int^{1}_{0}
   \abs{(\Delta_{\R^{n_{i}} \xx \mathcal{M}_{i}} +
     k^{2})^{-M}(x,y)}^{2} k^{4 M - 3} \, dk}^{\frac{1}{2}}
 \phi_{i}(y) \abs{f(y)} \, dy \\
 &\leq \int_{\mathcal{M}} \pi_{1}^{i}(x,y) \abs{f(y)} \, dy +
 \int_{\mathcal{M}} \pi_{2}^{i}(x,y) \abs{f(y)} \, dy \\
 &=: \Pi_{1}^{i}f(x) + \Pi_{2}^{i}f(x),
\end{split}\end{align*}
where $\pi_{1}^{i}(x,y)$ and $\pi_{2}^{i}(x,y)$ are the kernels
defined by
$$
\pi_{1}^{i}(x,y) := \abs{\nabla \phi_{i}(x)} \chi_{D_{r}}(x,y) \br{\int^{1}_{0}
  \abs{(\Delta_{\R^{n_{i}} \xx \mathcal{M}_{i}} +
    k^{2})^{-M}(x,y)}^{2} k^{4 M - 3} \, dk}^{\frac{1}{2}} \phi_{i}(y)
$$
and
$$
\pi_{2}^{i}(x,y) := \abs{\nabla \phi_{i}(x)} \abs{1 - \chi_{D_{r}}(x,y)} \br{\int^{1}_{0}
  \abs{(\Delta_{\R^{n_{i}} \xx \mathcal{M}_{i}} +
    k^{2})^{-M}(x,y)}^{2} k^{4 M - 3} \, dk}^{\frac{1}{2}} \phi_{i}(y).
$$
Let's prove that $\Pi_{1}^{i}$ is $L^{p}$-bounded for all $p \in [1,\infty]$.
From Proposition \ref{prop:EndResolvent},
\begin{align*}\begin{split}  
 &\abs{\nabla \phi_{i}(x)} \chi_{D_{r}}(x,y) \br{\int^{1}_{0} \abs{ (\Delta_{\R^{n_{i}} \xx
       \mathcal{M}_{i}} + k^{2})^{-M}(x,y))^{2}} k^{4 M - 3} \,
   dk}^{\frac{1}{2}} \phi_{i}(y) \\ & \hspace*{2in} \lesssim \chi_{D_{r}}(x,y) \br{\int^{1}_{0}
   \abs{d(x,y)^{2 - N} \exp \br{- c k d(x,y)}}^{2} k \,
   dk}^{\frac{1}{2}} \\
 & \hspace*{2in} = \chi_{D_{r}}(x,y) d(x,y)^{2 - N} \br{\int^{1}_{0} k \exp\br{- 2 c
     k d(x,y)} \, dk}^{\frac{1}{2}} \\
 & \hspace*{2in} \lesssim \chi_{D_{r}}(x,y) d(x,y)^{1 - N}.
\end{split}\end{align*}
It is obvious that an operator with this kernel will be bounded on
$L^{p}$ for all $p \in [1,\infty]$ since the local decay $d(x,y)^{1 -
  N}$ is stronger than $d(x,y)^{-N}$, and globally it is cut off by the
function $\chi_{D_{r}}$. Therefore $\Pi_{1}^{i}$ is bounded on $L^{p}$
for all $p \in [1,\infty]$.

\vspace*{0.1in}

Next, consider the operator $\Pi_{2}^{i}$.
From an application of H\"{o}lder's inequality for $p \geq 1$,
\begin{align*}\begin{split}  
 \norm{\Pi^{i}_{2}f}_{p}^{p} &= \int_{\mathcal{M}}
 \br{\int_{\mathcal{M}} \pi_{2}^{i}(x,y) \abs{f(y)} \, dy}^{p} \, dx
 \\
 &\leq \int_{\mathcal{M}}
 \norm{\pi^{i}_{2}(x,\cdot)}^{p}_{L^{p'}(\mathcal{M})} \, dx \cdot \norm{f}^{p}_{L^{p}(\mathcal{M})}.
\end{split}\end{align*}
On applying Proposition \ref{prop:EndResolvent} for $x, \, y \in
\R^{n_{i}} \xx \mathcal{M}_{i} \setminus K_{i}$ with $d(x,y) > r$,
\begin{align*}\begin{split}  
 \br{\int^{1}_{0} \abs{(\Delta_{\R^{n_{i}} \xx \mathcal{M}_{i}} +
     k^{2})^{-M}(x,y)}^{2} k^{4 M - 3} \, dk}^{\frac{1}{2}} &\lesssim
 d(x,y)^{2 - n_{i}} \br{\int^{1}_{0} k \exp(- 2 c k d(x,y)) \,
   dk}^{\frac{1}{2}} \\
 &\lesssim d(x,y)^{1 - n_{i}}.
\end{split}\end{align*}
It then follows from the fact that $\nabla \phi_{i}$ is compactly supported,
\begin{align*}\begin{split}  
 \int_{\mathcal{M}}
 \norm{\pi^{i}_{2}(x,\cdot)}_{L^{p'}(\mathcal{M})}^{p} \, dx &\lesssim
 \int_{\mathrm{supp} \, \nabla \phi_{i}} \norm{d(x,\cdot)^{1 -
     n_{i}}}_{L^{p'}(B(x,r)^{c})}^{p} \, dx \\
 &\lesssim \sup_{x \in \mathrm{supp} \, \nabla \phi_{i}}
 \norm{d(x,\cdot)^{1 - n_{i}}}_{L^{p'}(B(x,r)^{c})}^{p}.
\end{split}\end{align*}
For $p = 1$, this quantity is obviously finite and thus $\Pi_{2}^{i}$
is bounded on $L^{1}$. For $p > 1$,
$$
\norm{d(x,\cdot)^{1 - n_{i}}}_{L^{p'}(B(x,r)^{c})} = \br{\int_{d(x,y) > r}
  d(x,y)^{(1 - n_{i})p'} \, dy}^{\frac{1}{p'}}.
$$
This quantity will be bounded from above by a constant, uniformly in
$x$, provided that
 $$
p' (1 - n_{i}) < -n_{i} \quad \Leftrightarrow \quad p' >
\frac{n_{i}}{n_{i} - 1} = n_{i}' \quad \Leftrightarrow \quad p < n_{i}.
$$
This proves that $\Pi^{i}_{2}$ is bounded on $L^{p}$  for $1 \leq p <
n_{i}$. This completes the
proof of the $L^{p}$-boundedness of $\Pi^{i}$ for all $p \in [1,n_{i})$,
thereby demonstrating that the operator $S_{<}^{1}$ is 
$L^{p}$-bounded for $p \in (1,n_{min})$ and weak-type $(1,1)$.

\subsection{The Operator $S_{<}^{2}$}
\label{subsec:S2}

The operator $S^{2}_{<}$ is given by
$$
S^{2}_{<}f(x) = \br{\int^{1}_{0} \abs{\int_{\mathcal{M}} \nabla_{x}
    H_{2}(k)(x,y) f(y) \, dy}^{2} k^{4 M - 3} \, dk}^{\frac{1}{2}}.
$$
Minkowski's integral inequality implies that
\begin{align*}\begin{split}  
 S^{2}_{<}f(x) &\leq \int_{\mathcal{M}} \br{\int^{1}_{0} \abs{\nabla_{x}
     H_{2}(k)(x,y)}^{2} k^{4M - 3} \, dk}^{\frac{1}{2}} \abs{f(y)} \, dy \\
&= \int_{\mathcal{M}} h_{2}(x,y) \abs{f(y)} \, dy,
 \end{split}\end{align*}
where $h_{2}(x,y)$ is the kernel
$$
h_{2}(x,y) := \br{\int^{1}_{0} \abs{\nabla_{x} H_{2}(k)(x,y)}^{2}
  k^{4M - 3} \, dk}^{\frac{1}{2}}.
$$
First observe that the kernel $h_{2}$ is compactly supported in
$\mathcal{M}^{2}$. Next, notice that the operators $\lb \nabla H_{2}(k) \rb_{k \in (0,1)}$ constitute a
family of pseudodifferential operators of order $1 - 2 M$. At the
scale of an individual chart on $\mathcal{M}$, it
is evident that in local coordinates the symbol of $\nabla H_{2}(k)$, denoted
$a_{2}(k)(x,\xi)$, will satisfy
$$
\abs{\partial^{\alpha}_{\xi}a_{2}(k)(x,\xi)} \lesssim (\abs{\xi}^{2} +
k^{2})^{\frac{1 - 2 M - \abs{\alpha}}{2}}
$$
for all multi-indices $\alpha \geq 0$ and $k \in (0,1)$. From this,
standard pseudodifferential operator theory (c.f. {\cite[Sec.~0.2]{taylor}} for instance)
implies
$$
\abs{\nabla_{x}H_{2}(k)(x,y)} \lesssim k^{N + 1 - 2 M - b}
d(x,y)^{-b},
$$
for any $b > N + 1 - 2 M$. Setting $b = N - \frac{1}{2}$ then yields
$$
\abs{\nabla_{x}H_{2}(k)(x,y)} \lesssim k^{\frac{3}{2} - 2 M}
d(x,y)^{\frac{1}{2} - N}.
$$
Therefore,
\begin{align*}\begin{split}  
 h_{2}(x,y) &= \br{\int^{1}_{0} \abs{\nabla_{x} H_{2}(k)(x,y)}^{2}
   k^{4M - 3} \, dk}^{\frac{1}{2}} \\
 &\lesssim \br{\int^{1}_{0} dk}^{\frac{1}{2}} d(x,y)^{\frac{1}{2} - N}
 \\
 &= d(x,y)^{\frac{1}{2} - N}.
\end{split}\end{align*}
Thus $S_{<}^{2}$ is pointwise bounded from above by an operator with
kernel that is compactly supported in $\mathcal{M}^{2}$ and controlled by
$d(x,y)^{\frac{1}{2} - N}$. It follows immediately that $S_{<}^{2}$ must be
bounded on $L^{p}$ for all $p \in [1,\infty]$.

\subsection{The Operator $S_{<}^{3}$}
\label{subsec:S3}

In an identical manner to the operator $S_{<}^{2}$, Minkowski's integral
inequality allows us to control $S_{<}^{3}$ from above by
$$
S^{3}_{<}f(x) \leq \int_{\mathcal{M}} h_{3}(x,y) \abs{f(y)} \, dy,
$$
where $h_{3}(x,y)$ is the kernel defined through
$$
h_{3}(x,y) := \br{\int^{1}_{0} \abs{\nabla_{x} H_{3}(k)(x,y)}^{2} k^{4 M -
  3} \, dk}^{\frac{1}{2}}.
$$
H\"{o}lder's inequality implies that
\begin{align*}\begin{split}  
 \norm{S^{3}_{<}f}^{p}_{p} &\leq \int_{\mathcal{M}}
 \br{\int_{\mathcal{M}} h_{3}(x,y) \abs{f(y)} \, dy}^{p} \, dx \\
 &\leq \int_{\mathcal{M}} \br{\int_{\mathcal{M}} h_{3}(x,y)^{p'} \,
   dy}^{\frac{p}{p'}} \, dx \cdot \norm{f}^{p}_{p}.
\end{split}\end{align*}
Thus if it can be proved that
\begin{equation}
  \label{eqtn:KernelCondition}
\int_{\mathcal{M}} \br{\int_{\mathcal{M}} h_{3}(x,y)^{p'} \,
  dy}^{\frac{p}{p'}} \, dx < \infty
\end{equation}
then the operator $S^{3}_{<}$ will be bounded on $L^{p}$.
In order to
prove \eqref{eqtn:KernelCondition} it is  sufficient to prove the below
four separate conditions,
\begin{equation}
  \label{eqtn:KernelCondition1}
  \int_{K} \br{\int_{K} h_{3}(x,y)^{p'} \, dy}^{\frac{p}{p'}} \, dx < \infty,
\end{equation}
\begin{equation}
  \label{eqtn:KernelCondition2}
  \int_{\R^{n_{i}} \xx \mathcal{M}_{i} \setminus K_{i}} \br{\int_{K}
    h_{3}(x,y)^{p'} \, dy}^{\frac{p}{p'}} \, dx < \infty \quad for \
  any \ 1 \leq i \leq l,
\end{equation}
\begin{equation}
  \label{eqtn:KernelCondition3}
  \int_{K} \br{\int_{\R^{n_{j}} \xx \mathcal{M}_{j} \setminus K_{j}}
    h_{3}(x,y)^{p'} \, dy}^{\frac{p}{p'}} \, dx < \infty \quad for \
  any \ 1 \leq j \leq l,
\end{equation}
and
\begin{equation}
  \label{eqtn:KernelCondition4}
  \int_{\R^{n_{i}} \xx \mathcal{M}_{i} \setminus K_{i}} \br{\int_{\R^{n_{j}} \xx \mathcal{M}_{j} \setminus K_{j}}
    h_{3}(x,y)^{p'} \, dy}^{\frac{p}{p'}} \, dx < \infty \quad for \
  any \ 1 \leq i, \, j \leq l.
\end{equation}
For the first estimate \eqref{eqtn:KernelCondition1}, observe that
Proposition \ref{prop:H3} implies $\nabla_{x} H_{3}(k)(x,y) \lesssim k^{2 -
  2M}$ for all $x, \, y \in \mathcal{M}$. This estimate, when applied
to the definition of $h_{3}(x,y)$, gives
\begin{equation}
  \label{eqtn:h31}
h_{3}(x,y) \lesssim 1 \quad \forall \ x, \, y \in \mathcal{M},
\end{equation}
which immediately implies the validity of
\eqref{eqtn:KernelCondition1}.

Let us consider
\eqref{eqtn:KernelCondition2}. According to Proposition \ref{prop:H3},
for $x \in \R^{n_{i}} \xx \mathcal{M}_{i} \setminus K_{i}$ and $y \in K$,
$$
\abs{\nabla_{x} H_{3}(k)(x,y)} \lesssim k^{2 - 2 M} \langle
d(x_{i}^{\circ}, x) \rangle^{1 - n_{i}} \exp \br{- c k d(x_{i}^{\circ},x)}.
$$
This implies that
\begin{align}\begin{split}
    \label{eqtn:h32}
 h_{3}(x,y) &= \br{\int^{1}_{0} \abs{\nabla_{x} H_{3}(k)(x,y)}^{2} k^{4 M -
     3} \, dk}^{\frac{1}{2}} \\
 &\lesssim \langle d(x_{i}^{\circ},x) \rangle^{1 - n_{i}} \br{\int^{1}_{0} k
   \exp(- 2 c k d(x_{i}^{\circ},x)) \, dk}^{\frac{1}{2}} \\
 &\simeq \langle d(x_{i}^{\circ}, x) \rangle^{1 - n_{i}}
 \br{\frac{1}{d(x_{i}^{\circ},x)^{2}}}^{\frac{1}{2}} \\
 &\simeq \langle d(x_{i}^{\circ}, x) \rangle^{-n_{i}}.
\end{split}\end{align}
On applying this estimate to \eqref{eqtn:KernelCondition2},
$$
 \int_{\R^{n_{i}} \xx \mathcal{M}_{i} \setminus K_{i}} \br{\int_{K}
   h_{3}(x,y)^{p'} \, dy}^{\frac{p}{p'}} \, dx \lesssim
 \int_{\R^{n_{i}} \xx \mathcal{M}_{i} \setminus K_{i}} \langle
 d(x_{i}^{\circ}, x) \rangle^{-n_{i} p} \, dx.
$$
This will clearly be finite provided that $p > 1$.

Let us now consider \eqref{eqtn:KernelCondition3}. For $x \in K$ and
$y \in \R^{n_{j}} \xx \mathcal{M}_{j} \setminus K_{j}$, Proposition \ref{prop:H3} tells us that
$$
\abs{\nabla_{x} H_{3}(k)(x,y)} \lesssim k^{2 - 2 M} \langle d(x_{j}^{\circ},y)
\rangle^{2 - n_{j}} \exp(- c k d(x_{j}^{\circ}, y)).
$$
This leads to the pointwise estimate
\begin{align*}\begin{split}  
 h_{3}(x,y) &\lesssim  \langle d(x_{j}^{\circ},y) \rangle^{2 - n_{j}}
 \br{\int^{1}_{0} k \exp(- 2 c k d(x_{j}^{\circ},y)) \,
   dk}^{\frac{1}{2}} \\
 &\lesssim \langle d(x_{j}^{\circ},y) \rangle^{1 - n_{j}}.
\end{split}\end{align*}
Applying this to \eqref{eqtn:KernelCondition3},
$$
\int_{K} \br{\int_{\R^{n_{j}} \xx \mathcal{M}_{j} \setminus K_{j}}
  h_{3}(x,y)^{p'} \, dy}^{\frac{p}{p'}} \, dx \lesssim \int_{K}
\br{\int_{\R^{n_{j}} \xx \mathcal{M}_{j} \setminus K_{j}} \langle
  d(x_{j}^{\circ},y) \rangle^{(1- n_{j}) p'} \, dy}^{\frac{p}{p'}} \, dx.
$$
This will be finite provided that
$$
(n_{j} - 1) p' > n_{j} \quad \Leftrightarrow \quad p' >
\frac{n_{j}}{n_{j - 1}} = n_{j}' \quad \Leftrightarrow \quad p < n_{j}.
$$
Finally, it remains to prove estimate
\eqref{eqtn:KernelCondition4}. For $x \in \R^{n_{i}} \xx
\mathcal{M}_{i} \setminus K_{i}$ and $y \in \R^{n_{j}} \xx \mathcal{M}_{j}
\setminus K_{j}$, Proposition \ref{prop:H3} implies that
$$
\abs{\nabla_{x} H_{3}(k)(x,y)} \lesssim k^{2 - 2M} \langle
d(x_{i}^{\circ},x) \rangle^{1 - n_{i}} \langle d(x_{j}^{\circ},y)
\rangle^{2 - n_{j}} \exp (- c k (d(x_{i}^{\circ},x) + d(x_{j}^{\circ},y))).
$$
This leads to the pointwise bound
\begin{align}\begin{split}
    \label{eqtn:h33}
 \abs{h_{3}(x,y)} &\lesssim \langle d(x_{i}^{\circ},x) \rangle^{1 -
   n_{i}} \langle d(x_{j}^{\circ},y) \rangle^{2 - n_{j}}
 \br{\int^{1}_{0} k \exp(- 2 c k (d(x_{i}^{\circ},x) +
   d(x_{j}^{\circ},y))) \, dk}^{\frac{1}{2}} \\
 &\lesssim \langle d(x_{i}^{\circ},x) \rangle^{1 -
   n_{i}} \langle d(x_{j}^{\circ},y) \rangle^{2 - n_{j}}
 \br{\frac{1}{(d(x_{i}^{\circ},x) +
     d(x_{j}^{\circ},y))^{2}}}^{\frac{1}{2}} \\
 &\leq \min \br{\langle d(x_{i}^{\circ},x) \rangle^{-n_{i}} \langle
   d(x_{j}^{\circ},y) \rangle^{2 - n_{j}}, \langle d(x_{i}^{\circ},x)
   \rangle^{1 - n_{i}} \langle d(x_{j}^{\circ},y) \rangle^{1 - n_{j}}}.
\end{split}\end{align}
Applying this to \eqref{eqtn:KernelCondition4},
\begin{align*}\begin{split}  
 \int_{\R^{n_{i}} \xx \mathcal{M}_{i} \setminus K_{i}} &
 \br{\int_{\R^{n_{j}} \xx \mathcal{M}_{j} \setminus K_{j}} h_{3}(x,y)^{p'}
 \, dy}^{\frac{p}{p'}} \, dx  \\ &  \lesssim \int_{\R^{n_{i}} \xx
 \mathcal{M}_{i} \setminus K_{i}} \br{\int_{D_{j}^{1}(x)} \langle d(x_{i}^{\circ},x) \rangle^{(1 -
   n_{i})p'} \langle d(x_{j}^{\circ},y) \rangle^{(1 - n_{j})p'} \,
 dy}^{\frac{p}{p'}} \, dx \\
&  \qquad  + \int_{\R^{n_{i}} \xx
 \mathcal{M}_{i} \setminus K_{i}} \br{\int_{D_{j}^{2}(x)} \langle
 d(x_{i}^{\circ},x) \rangle^{-n_{i} p'} \langle d(x_{j}^{\circ},y) \rangle^{(2 - n_{j})p'} \,
 dy}^{\frac{p}{p'}} \, dx \\
&=: I_{1} + I_{2},
\end{split}\end{align*}
where
\begin{align*}\begin{split}  
 &D_{j}^{1}(x) := \lb y \in \R^{n_{j}} \xx
     \mathcal{M}_{j} \setminus K_{j} : d(x_{j}^{\circ},y) \geq
     d(x_{i}^{\circ},x) \rb, \\
     &D_{j}^{2}(x) := \lb y \in \R^{n_{j}} \xx
     \mathcal{M}_{j} \setminus K_{j} : d(x_{j}^{\circ},y) <
     d(x_{i}^{\circ},x) \rb.
   \end{split}\end{align*}
 For the first term,
 $$
I_{1} = \int_{\R^{n_{i}} \xx \mathcal{M}_{i} \setminus K_{i}} \frac{1}{\langle
  d(x_{i}^{\circ},x) \rangle^{(n_{i} - 1) p}}  \br{\int_{D_{j}^{1}(x)}
\frac{dy}{\langle d(x_{j}^{\circ},y) \rangle^{(n_{j} -
    1)p'}}}^{\frac{p}{p'}} \, dx.
$$
The interior integral is given by
\begin{align}\begin{split}
    \label{eqtn:D1j}
\int_{D^{1}_{j}(x)} \frac{dy}{\langle d(x_{j}^{\circ},y)
  \rangle^{(n_{j} - 1) p'}} &= \int_{\R^{n_{j}} \xx \mathcal{M}_{j}
  \setminus K_{j}} \mathbbm{1}_{d(x_{j}^{\circ},y) \geq
    d(x_{i}^{\circ},x)} \frac{dy}{\langle d(x_{j}^{\circ},y) \rangle^{(n_{j} -
      1)p'}}  \\
  &\lesssim \int_{\R^{n_{j}} \xx \mathcal{M}_{j}}
  \frac{dy}{(d(x_{j}^{\circ},y) + d(x_{i}^{\circ},x))^{(n_{j} - 1)p'}}
  \\
  &\lesssim \int_{\R^{n_{j}}} \frac{dy_{1}}{(|y_{1} -
      x_{j,1}^{\circ}| + d(x_{i}^{\circ},x))^{(n_{j} - 1)p'}},
\end{split}\end{align}
where the notation $x_{j,1}^{\circ}$ denotes the Euclidean component
of $x_{j}^{\circ}$ in $\R^{n_{j}} \xx \mathcal{M}_{j}$.
 This will be integrable when
$$
(n_{j} - 1) p' > n_{j} \quad \Leftrightarrow \quad p' >
\frac{n_{j}}{n_{j} - 1} = n_{j}' \quad \Leftrightarrow \quad p < n_{j}.
$$
In which case,
\begin{align*}\begin{split}  
 \int_{D^{1}_{j}(x)} \frac{dy}{\langle d(x_{j}^{\circ},y)
   \rangle^{(n_{j} - 1) p'}} &\lesssim
 \int^{\infty}_{d(x_{i}^{\circ},x)} \frac{r^{n_{j} - 1}}{r^{(n_{j} -
     1)p'}} \, dr \\
 &\simeq \brs{- r^{-(n_{j} - 1)p' +
     n_{j}}}^{\infty}_{d(x_{i}^{\circ},x)} \\
 &\lesssim \langle d(x_{i}^{\circ}, x) \rangle^{-(n_{j} - 1)p' + n_{j}},
\end{split}\end{align*}
where we used the fact that $p < n_{j}$ to deduce that $-(n_{j} - 1)p'
+ n_{j} < 0$ when performing the integration.
Applying this estimate to $I_{1}$ gives
\begin{align*}\begin{split}  
 I_{1} &\lesssim \int_{\R^{n_{i}} \xx \mathcal{M}_{i} \setminus K_{i}}
\frac{1}{\langle d(x_{i}^{\circ},x) \rangle^{(n_{i} - 1)p}} \cdot
\langle d(x_{i}^{\circ},x) \rangle^{-(n_{j} - 1)p + n_{j}
  \frac{p}{p'}} \, dx \\
&= \int_{\R^{n_{i}} \xx \mathcal{M}_{i} \setminus K_{i}} \frac{dx}{
  \langle d(x_{i}^{\circ},x) \rangle^{(n_{i} - 1)p + (n_{j} - 1)p -
    n_{j}(p - 1)}}.
\end{split}\end{align*}
This will be finite provided that
$$
(n_{i} - 1)p + (n_{j} - 1)p - n_{j}(p - 1) > n_{i} \quad
\Leftrightarrow \quad p > \frac{n_{i} - n_{j}}{n_{i} - 2}.
$$
Since $\frac{n_{i} - n_{j}}{n_{i} - 2} < \frac{n_{i} - 2}{n_{i} -
  2} = 1$, this will be satisfied when $p > 1$. It has therefore been
proved that $I_{1}$ is finite when $1 < p < n_{j}$.

It remains to consider the term $I_{2}$, 
\begin{equation}
  \label{eqtn:I2}
  I_{2} = \int_{\R^{n_{i}} \xx \mathcal{M}_{i} \setminus K_{i}}
  \frac{1}{\langle d(x_{i}^{\circ},x) \rangle^{n_{i}p}}
  \br{\int_{D^{2}_{j}(x)} \frac{dy}{\langle d(x_{j}^{\circ},y)^{(n_{j}
        - 2) p'}}}^{\frac{p}{p'}} \, dx.
\end{equation}
 It will be proved that this
term is finite for all $1 < p < \infty$. The interior integral is
given by
\begin{align*}\begin{split}  
 \int_{D^{2}_{j}(x)} \frac{dy}{\langle d(x_{j}^{\circ},y)
   \rangle^{(n_{j} - 2) p'}} &= \int_{\R^{n_{j}} \xx \mathcal{M}_{j}
   \setminus K_{j}}  \mathbbm{1}_{d(x_{j}^{\circ},y) <
   d(x_{i}^{\circ},x)} \frac{dy}{\langle d(x_{j}^{\circ},y)
   \rangle^{(n_{j} - 2)p'}} \\
 &\lesssim \int_{\R^{n_{j}} \xx \mathcal{M}_{j}}
 \mathbbm{1}_{d(x_{j}^{\circ},y) < d(x_{i}^{\circ},x)}
 \frac{dy}{(d(x_{j}^{\circ},y) + D_{K})^{(n_{j} - 2)p'}} \\
 &\lesssim \int_{\R^{n_{j}}} \mathbbm{1}_{|y_{1} - x_{j,1}^{\circ}| <
   d(x_{i}^{\circ},x)} \frac{dy_{1}}{(|y_{1} - x_{j,1}^{\circ}| +
   D_{K})^{(n_{j} - 2)p'}} \\
 &\lesssim \int^{2 d(x_{i}^{\circ},x)}_{D_{K}} r^{n_{j} - 1 - (n_{j} -
   2)p'} \, dr,
\end{split}\end{align*}
where $D_{K} > 0$ is some constant that satisfies $d(x_{k}^{\circ},z) >
D_{K}$ for all $z \in \mathcal{M} \setminus K$ and $k = 1, \cdots, l$.
 Let's estimate this integral based on three distinct cases.

\vspace*{0.1in}

\underline{\textit{Case 1.}} Suppose $n_{j} - 1 - (n_{j} - 2)p' > -1$,
which itself is equivalent to the condition $p > \frac{n_{j}}{2}$. Then
\begin{align*}\begin{split}  
 \int^{2 d(x_{i}^{\circ},x)}_{D_{K}} r^{n_{j} - 1 - (n_{j} - 2)p'} \, dr
 &\simeq \brs{r^{n_{j} - (n_{j} - 2)p'}}^{2 d(x_{i}^{\circ},x)}_{D_{K}}
 \\
 &\leq \langle d(x_{i}^{\circ},x) \rangle^{n_{j} - (n_{j} - 2)p'}.
\end{split}\end{align*}
Substituting this back into $I_{2}$,
\begin{align*}\begin{split}  
 I_{2} &\lesssim \int_{\R^{n_{i}} \xx \mathcal{M}_{i} \setminus K_{i}}
 \frac{1}{\langle d(x_{i}^{\circ},x) \rangle^{n_{i}p}} \cdot \langle
 d(x_{i}^{\circ},x) \rangle^{n_{j} \frac{p}{p'} - (n_{j} - 2)p} \, dx
 \\
 &= \int_{\R^{n_{i}} \xx \mathcal{M}_{i} \setminus K_{i}} \frac{dx}{\langle
 d(x_{i}^{\circ},x) \rangle^{n_{i}p + (n_{j} - 2)p - n_{j}
   \frac{p}{p'}}}.
 \end{split}\end{align*}
This will be integrable when
$$
n_{i} p + (n_{j} - 2) p - n_{j} (p - 1) > n_{i} \quad \Leftrightarrow
\quad p > \frac{n_{i} - n_{j}}{n_{i} - 2},
$$
which will be satisfied when $p > 1$ since $\frac{n_{i} - n_{j}}{n_{i}
- 2} < \frac{n_{i} - 2}{n_{i} - 2} = 1$.

\vspace*{0.1in}

\underline{\textit{Case 2.}} Suppose  that $n_{j} - 1 - (n_{j} - 2)p'
< -1$, which itself is equivalent to the condition $p <
\frac{n_{j}}{2}$. We would then have
\begin{align*}\begin{split}  
 \int^{2 d(x_{i}^{\circ},x)}_{D_{K}} r^{n_{j} - 1 - (n_{j} - 2)p'} \, dr &\simeq
 \brs{-r^{n_{j} - (n_{j} - 2)p'}}^{2 d(x_{i}^{\circ},x)}_{D_{K}} \\
 &\lesssim 1.
\end{split}\end{align*}
Thus
$$
I_{2} \lesssim \int_{\R^{n_{i}} \xx \mathcal{M}_{i} \setminus K_{i}}
\frac{1}{\langle d(x_{i}^{\circ},x) \rangle^{n_{i} p}} \, dx,
$$
which will be finite when $p > 1$.

\vspace*{0.1in}

\underline{\textit{Case 3.}} Suppose $n_{j} - 1 - (n_{j} - 2)p' = -1$,
which itself is equivalent to the condition $p = \frac{n_{i}}{2}$. We
would then have
\begin{align*}\begin{split}  
 \int^{2 d(x_{i}^{\circ},x)}_{D_{K}} r^{n_{j} - 1 - (n_{j} - 2)p'} \, dr
 &= \int^{2 d(x_{i}^{\circ},x)}_{D_{K}} r^{-1} \, dr \\
 &= \frac{1}{D_{K}^{\epsilon}} \int^{2 d(x_{i}^{\circ},x)}_{D_{K}}
 \frac{D_{K}^{\epsilon}}{r} \, dr \\
 &\lesssim \int^{2 d(x_{i}^{\circ},x)}_{D_{K}} r^{\epsilon - 1} \, dr \\
 &\lesssim \langle d(x_{i}^{\circ},x) \rangle^{\epsilon}.
\end{split}\end{align*}
Therefore,
$$
I_{2} \lesssim \int_{\R^{n_{i}} \xx \mathcal{M}_{i} \setminus K_{i}}
\frac{1}{\langle d(x_{i}^{\circ},x) \rangle^{n_{i} p - \epsilon \frac{p}{p'}}} \, dx.
$$
This will be finite when
$$
n_{i}p - \epsilon (p - 1) > n_{i} \quad \Leftrightarrow \quad p > 1.
$$
This completes our proof of estimate
\eqref{eqtn:KernelCondition4} and thus shows that the operator
$S^{3}_{<}$ is bounded on $L^{p}$ for all $p \in (1, \min_{i} n_{i})$.

\vspace*{0.1in}

It remains to prove that $S^{3}_{<}$ is weak-type $(1,1)$. On
combining \eqref{eqtn:h31}, \eqref{eqtn:h32} and \eqref{eqtn:h33}, it
is evident that
$$
\sup_{y \in \mathcal{M}} \abs{h_{3}(x,y)} \lesssim
\left\lbrace \begin{array}{c c} 
 \langle d(x_{i}^{\circ},x) \rangle^{-n_{i}} & for \ x \in \R^{n_{i}}
                                               \xx \mathcal{M}_{i}
                                               \setminus K_{i}, \ 1 \leq i
                                               \leq l, \\
               1 & for \ x \in K.
 \end{array} \right.
$$
This implies that for $f \in C^{\infty}_{c}$,
\begin{align*}\begin{split}  
 \abs{S_{<}^{3}f(x)} &\leq \int_{\mathcal{M}} h_{3}(x,y) \abs{f(y)} \, dy
 \\
 &\leq \sup_{y \in \mathcal{M}} \abs{h_{3}(x,y)} \int_{\mathcal{M}}
 \abs{f(y)} \, dy \\
 &\lesssim \left\lbrace \begin{array}{c c} 
 \langle d(x_{i}^{\circ},x) \rangle^{-n_{i}} \norm{f}_{L^{1}} & for \
                                                                x \in
                                                                \R^{n_{i}}
                                                                \xx
                                                                \mathcal{M}_{i}
                                                                \setminus
                                                                K_{i}, \ 1
                                                                \leq i
                                                                \leq l,
                          \\
                          \norm{f}_{L^{1}} & for \ x \in K.
 \end{array} \right.
 \end{split}\end{align*}
This function is clearly in $L^{1,\infty}$, and thus
$S_{<}^{3}$ is weak-type $(1,1)$.

\subsection{The Operator $S_{<}^{4}$}

The previous proof of the $L^{p}$-boundedness and weak-type $(1,1)$ property of the operator
$S^{3}_{<}$ was derived entirely from the pointwise estimates for the
kernel of $\nabla H_{3}(k)$ provided by Proposition
\ref{prop:H3}. Since the kernels of $\nabla H_{4}(k)$ satisfy even
stronger pointwise estimates by Proposition \ref{prop:H4}, an
identical argument can be applied to
obtain the $L^{p}$-boundedness and weak-type $(1,1)$ property of the operator $S^{4}_{<}$. Moreover,
due to the increased strength of the pointwise estimates for $\nabla
H_{4}(k)$, one will obtain $L^{p}$-boundedness for all $p$
in the full range $(1,\infty)$ instead of the restricted range
$(1,n_{min})$. This improved range of boundedness will prove to be an
essential part of our proof of the unboundedness of $S$ for $p \geq
n_{min}$ in Section \ref{sec:Unboundedness}. As such, the reader is advised
keep it in mind.

 \section{The High Energy Square Function}
 \label{sec:High}
 
 Recall that the high energy part of the square function is defined by
 $$
 S_{>}f(x) := \br{\int^{\infty}_{1} \abs{\nabla (k^{2} + \Delta)^{-M} f(x)}^{2}
 	k^{4 M - 3} \, dk}^{\frac{1}{2}}. 
 $$
 The main aim of this section is to verify that the operator $S_{>}$ is bounded on all $L^p$ spaces for $1<p<\infty$ and is weak type $(1,1)$. The argument we describe below has the same structure as in  \cite[Section 5]{hs2019} which we adapt to the square function setting.

\begin{prop} 
	\label{high} 
	The high energy square function operator $S_{>}$ is bounded on $L^{p}(\mathcal{M})$
	for any $p \in (1, \infty)$. That is, for every $1< p < \infty$ there
	exists $c > 0$ such that 
	$$
 \norm{S_{>} f}_{p}	\leq c \norm{f}_{p} 
	$$
	for all $f \in L^{p}(\mathcal{M})$. 
	In addition the operator $S_{>} $ 
	satisfies weak type $(1,1)$ estimates. 
\end{prop}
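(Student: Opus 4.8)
The plan is to realise $S_{>}$ as the fibrewise $\mathcal{H}$-norm of a single linear operator and then to combine an $L^2$-estimate coming from the spectral theorem with vector-valued Calder\'on--Zygmund theory, following the scheme of \cite[Section~5]{hs2019}. Set $\mathcal{H}:=L^{2}((1,\infty),k^{4M-3}\,dk)$ and let $T$ be the linear operator $Tf:=\big(k\mapsto\nabla(\Delta+k^{2})^{-M}f\big)$, so that $S_{>}f(x)=\norm{Tf(x)}_{\mathcal{H}}$; it then suffices to show that $T:L^{p}(\mathcal{M})\to L^{p}(\mathcal{M};\mathcal{H})$ is bounded for every $p\in(1,\infty)$ and that $f\mapsto\norm{Tf(\cdot)}_{\mathcal{H}}$ is of weak type $(1,1)$. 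First I would prove the $L^2$-bound: since $\Delta$ is self-adjoint, integration by parts and the functional calculus give $\norm{\nabla(\Delta+k^{2})^{-M}f}_{2}^{2}=\langle\Delta(\Delta+k^{2})^{-2M}f,f\rangle$, hence
\[
\norm{S_{>}f}_{2}^{2}=\big\langle\Psi(\Delta)f,f\big\rangle,\qquad\Psi(\lambda):=\int_{1}^{\infty}\frac{\lambda\,k^{4M-3}}{(k^{2}+\lambda)^{2M}}\,dk .
\]
The substitution $k=\sqrt{\lambda}\,u$ shows $\Psi(\lambda)=\int_{1/\sqrt{\lambda}}^{\infty}u^{4M-3}(1+u^{2})^{-2M}\,du\le\int_{0}^{\infty}u^{4M-3}(1+u^{2})^{-2M}\,du<\infty$, the integral converging since $M\ge1$; thus $\norm{S_{>}f}_{2}\lesssim\norm{f}_{2}$.

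Next I would record Calder\'on--Zygmund estimates for the kernel $K(x,y)(k):=\nabla_{x}(\Delta+k^{2})^{-M}(x,y)$. Using the heat--resolvent identity \eqref{eqtn:ResolventFormula} together with the local Gaussian upper bound for $e^{-t\Delta}(x,y)$ on $\mathcal{M}$ when $t\le1$ (the range $t>1$ being harmless because of the factor $e^{-tk^{2}}$ with $k\ge1$), and the analogous bounds for $\nabla_{x}e^{-t\Delta}$ and $\nabla_{y}\nabla_{x}e^{-t\Delta}$, one obtains — in exact analogy with Proposition \ref{prop:Bessel} and Corollaries \ref{cor:EndResolvent}--\ref{cor:GradEndResolvent}, but now on $\mathcal{M}$ itself — that for $k\ge1$
\[
\abs{\nabla_{x}(\Delta+k^{2})^{-M}(x,y)}+d(x,y)\,\abs{\nabla_{y}\nabla_{x}(\Delta+k^{2})^{-M}(x,y)}\lesssim k^{N+1-2M}\,G^{N}_{2M-1}\big(c\,d(x,y)k\big).
\]
Inserting this into the $\mathcal{H}$-norm and integrating in $k$ yields $\norm{K(x,y)}_{\mathcal{H}}\lesssim d(x,y)^{-N}$ and $\norm{\nabla_{y}K(x,y)}_{\mathcal{H}}\lesssim d(x,y)^{-N-1}$ when $d(x,y)\le1$, whereas for $d(x,y)\ge1$ the exponential factor in $G^{N}_{2M-1}$ forces $\norm{K(x,y)}_{\mathcal{H}}+\norm{\nabla_{y}K(x,y)}_{\mathcal{H}}\lesssim e^{-c\,d(x,y)}$; by symmetry of the resolvent kernel the same bounds hold after exchanging $x$ and $y$.

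Then I would split $T=T_{\mathrm{glob}}+T_{\mathrm{loc}}$ according to the kernels $K(x,y)\,\mathbbm{1}_{\{d(x,y)>1\}}$ and $K(x,y)\,\mathbbm{1}_{\{d(x,y)\le1\}}$. For $T_{\mathrm{glob}}$ the exponential decay of $\norm{K(x,y)}_{\mathcal{H}}$ together with the polynomial volume growth of $\mathcal{M}$ gives $\sup_{x}\int_{d(x,y)>1}\norm{K(x,y)}_{\mathcal{H}}\,d\mu(y)<\infty$ and the symmetric bound in $y$, so Schur's lemma makes $T_{\mathrm{glob}}$ bounded $L^{p}\to L^{p}(\mathcal{H})$ for all $1\le p\le\infty$; in particular $f\mapsto\norm{T_{\mathrm{glob}}f}_{\mathcal{H}}$ is of strong — hence weak — type $(1,1)$. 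For $T_{\mathrm{loc}}$, whose kernel is supported in $\{d(x,y)\le1\}$, one uses that $\mathcal{M}$ is uniformly locally of homogeneous type ($V(B(x,r))\simeq r^{N}$ for $r\le1$): localising to pieces contained in doubling manifolds (coordinate charts near $K$, or the ends $\R^{n_{i}}\xx\mathcal{M}_{i}$ themselves) via a finite partition of unity, the vector-valued Calder\'on--Zygmund theorem applies, and from the $L^2$-boundedness of $T_{\mathrm{loc}}$ (inherited from $T$ and $T_{\mathrm{glob}}$) and the H\"ormander regularity recorded above it yields $T_{\mathrm{loc}}:L^{p}\to L^{p}(\mathcal{H})$ for $1<p\le2$ and $f\mapsto\norm{T_{\mathrm{loc}}f}_{\mathcal{H}}$ of weak type $(1,1)$. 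The range $2<p<\infty$ then follows by duality: $T^{*}$ maps $L^{p'}(\mathcal{H})\to L^{p'}(\mathcal{M})$ for $1<p'<2$ because it is $L^2$-bounded with a kernel satisfying the same Calder\'on--Zygmund estimates (again by symmetry of the resolvent kernel and the $\nabla_{y}$-bound above), so the preceding argument applies verbatim to $T^{*}$. Collecting the two halves gives Proposition \ref{high}.

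I expect the main obstacle to be the kernel step: establishing the near-diagonal bounds — in particular the $\nabla_{y}$-regularity needed for the H\"ormander condition — directly on $\mathcal{M}$ rather than on the model ends, and then making precise the claim that the high-energy kernel is, modulo exponentially small errors, effectively supported near the diagonal, so that the failure of the global doubling property of $\mathcal{M}$ plays no role. Once this localisation is in hand, the remainder is the classical vector-valued Calder\'on--Zygmund argument together with the elementary spectral computation above.
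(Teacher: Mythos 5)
Your overall strategy — exhibit $S_{>}$ as the $\mathcal{H}$-norm of an $\mathcal{H}$-valued operator, check the $L^2$ bound spectrally, then split into a near-diagonal piece treated by vector-valued Calder\'on--Zygmund theory and an off-diagonal tail treated by Schur's test — is the right shape, but it is a genuinely different route from the paper's, and the difference matters.

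The paper does not try to prove pointwise resolvent kernel estimates on $\mathcal{M}$ at all. Instead it decomposes $(\lambda^{2}+k^{2})^{-M}=G^{M}_{1,k}(\lambda)+H^{M}_{1,k}(\lambda)$ in the \emph{Fourier dual} variable: $\widehat{G^{M}_{1,k}}$ is compactly supported, so by writing $G^{M}_{1,k}(\sqrt{\Delta})$ as a wave-group integral and invoking finite propagation speed of $\cos(t\sqrt{\Delta})$ one gets that $G^{M}_{1,k}(\sqrt{\Delta})$ has kernel \emph{exactly} supported in $\{d(x,y)\le 1\}$. That near-diagonal piece is then a pseudodifferential operator of order $1-2M$ uniformly in $k$, so its kernel bounds and the vector-valued CZ argument are purely local. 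The off-diagonal tail $H^{M}_{r,k}(\sqrt{\Delta})$ is controlled not by pointwise kernel bounds but by $\sup_{\lambda}|H^{M}_{r,k}(\lambda)|\lesssim e^{-ckr}$ combined with the Sobolev embedding $\|(I+\Delta)^{-n}\|_{2\to\infty}<\infty$, which gives $\sup_{y}\int_{d(x,y)>r}|K_{\nabla H^{M}_{1,k}(\sqrt\Delta)}(x,y)|^{2}dx\lesssim e^{-ckr}$; the $y$-integrability is then recovered via the Hodge-Laplacian intertwining $dF(\sqrt{\Delta_{0}})=F(\sqrt{\Delta_{1}})d$. In short, the paper trades pointwise kernel control for functional-calculus estimates and wave-equation support information.

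Your proposal instead truncates the kernel at $d(x,y)=1$ in physical space and aims for pointwise bounds on $\nabla_{x}(\Delta+k^{2})^{-M}(x,y)$ and $\nabla_{y}\nabla_{x}(\Delta+k^{2})^{-M}(x,y)$ on $\mathcal{M}$ itself via the heat–resolvent identity. This is where the gap lies. Proposition~\ref{prop:Bessel} is stated and proved on the \emph{product} pieces $\R^{n_{i}}\times\mathcal{M}_{i}$ where the paper can cite clean two-sided Gaussian bounds for $e^{-t\Delta}$, $\nabla e^{-t\Delta}$ from \cite{hs2019}; it does not extend ``in exact analogy'' to $\mathcal{M}$. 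On $\mathcal{M}$, the short-time ($t\le 1$) Gaussian bounds for the heat kernel and its first and mixed second derivatives are fine (bounded geometry), but for $t>1$ the paper explicitly remarks that gradient heat-kernel estimates are absent from the literature on these non-doubling manifolds. Your parenthetical ``harmless because of $e^{-tk^{2}}$'' is not a proof: the factor $e^{-tk^{2}}$ by itself yields only a bound $\lesssim e^{-k^{2}/2}$ for the tail, with no decay in $d(x,y)$, and that is not enough to make $T_{\mathrm{glob}}$ a Schur-integrable kernel. To get $e^{-c\,k\,d(x,y)}$ decay from the $t>1$ part you have to interact the $e^{-tk^{2}}$ with the Gaussian decay of the long-time kernel (available from Grigor'yan--Saloff-Coste) through, e.g., a Chapman--Kolmogorov factorisation $e^{-t\Delta}=e^{-\Delta/2}e^{-(t-1/2)\Delta}$ to move the derivatives onto the short-time factor — an argument that can be made to work, but which you have not given and which is the real technical content of your approach. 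You correctly flag the kernel step as ``the main obstacle''; in the paper it is the obstacle that the finite-speed-of-propagation decomposition is specifically designed to bypass.

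Secondary remarks: the localisation of $T_{\mathrm{loc}}$ to a finite partition of unity subordinated to $K$ and the ends is reasonable (each piece is doubling), and your computation that the claimed pointwise bounds yield $\|K(x,y)\|_{\mathcal{H}}\lesssim d(x,y)^{-N}$ and $\|\nabla_{y}K(x,y)\|_{\mathcal{H}}\lesssim d(x,y)^{-N-1}$ on $d(x,y)\le 1$ is correct. The $L^{2}$ bound via $\Psi(\lambda)$ is fine. For the duality step, the symmetry of the resolvent kernel does indeed let you transfer the Hörmander condition to the adjoint, provided the mixed-derivative bound is in hand — which again returns to the same gap.
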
 
\begin{proof}
	Denote by $F_M$ the Fourier transform of the function $\lambda \to (\lambda+1)^{-M}$. Note that 
	$$
	\int_{-\infty}^\infty e^{-i\xi \lambda}(\lambda^2+k^2)^{-M}d \lambda= k^{1-2M}F_M(k|\xi|).
		$$
	In fact $F_1(|\xi|)	=c e^{-|\xi|}$ and $ k^{1-2(M+1)}F_{M+1}(k|\xi|)=
	\partial_{k^2} k^{1-2M}F_M(k|\xi|)$. Note also that %for any $\epsilon'<1$ 
	there exist  positive constants $C, c>0$ (depending on $M$) such 
	that 
	$$
	F_M(|\xi|) \le C e^{-c|\xi|}.
	$$
	%(In fact $1/2$ can be replace by any positive number strictly smaller than 1.)

	Now, let $\eta \in C_c^{\infty}(\R)$ be an even  compactly supported function such that 
	$0 \le \eta (\xi)\le 1$,  $\eta(\xi) =1$ for all $ |\xi| \le
        1/2$ and $\eta (\xi)=0$ for $|\xi| \ge 1$. For any $r>0$ we
        define the function ${G}^M_{r,k}$
	as the inverse Fourier transform of 
	$$
	k^{1-2M}F_M(k|\xi|)\eta (\xi/r).
	$$	
	Then we set 
\begin{equation}
H^M_{r,k}(\lambda)=(\lambda^2+k^2)^{-M}-{G}^M_{r,k}(\lambda)
\end{equation}	
so that the Fourier transform of $H^M_{r,k}$ is equal to 
$$
k^{1-2M}F_M(k|\xi|)\Big(1-\eta (\xi/r)\Big).
$$	
Hence for some  positive constants $C, c>0$ 
\begin{equation}\label{H}
\sup_{\lambda}|H^M_{r,k}(\lambda)| \le C e^{-ckr}
\end{equation}
	
%Note that $FG^M_{r,k}(\lambda)$ is a symbol of order $-2$. 

 Proceeding as in \cite[(42) p. 1089]{hs2019}, we have (using the evenness of $\widehat{G}^M_{r,k}(\xi)$)
\begin{align}
G^M_{r,k}(\sqrt{\Delta}) = \frac1{2\pi} \int_{-\infty}^\infty e^{it \sqrt{\Delta} } \widehat{G^M_{r,k}}(t)  \, dt 
= \frac1{\pi} \int_{0}^r \cos(t\sqrt{\Delta} ) \widehat{G^M_{r,k}}(t) \, dt .
\label{cosine}
\end{align}
By \eqref{cosine} and by the finite speed of propagation of $\cos(t\sqrt{\Delta} )$ it follows that  $ G^M_{r,k}(\sqrt{\Delta})$ has Schwartz kernel supported in the set $ \{ (z, z') \mid d(z,z') \leq r \}$. 
Next, set $r = \tilde{r} > 0$ to be half the injectivity radius of $\mathcal{M}$.
Note that the injectivity radius of $\mathcal{M}$ is strictly positive
and, without loss of generality, it can be assumed that $\tilde{r} = 1$.

The operator $\nabla G^{M}_{\tilde{r},k}(\sqrt{\Delta}) = \nabla G^M_{1,k}(\sqrt{\Delta})$ is a pseudodifferential operator of order $1-2M$  since the function $ G^M_{1,k}(\lambda)$ is a symbol of order $-2M$
see \cite[Chapter XII, Section 1]{taylor}. This implies that for any $b \ge N+1-2M$, see \cite[Poposition 2.2 p. 6]{taylor2} 
		\begin{align*}
 |\nabla G^M_{1,k}(\sqrt{\Delta})(x,y)| \le C k^{N-2M-b+1}d(x,y)^{-b}.
	\end{align*}
Hence, taking $b_1= N- \frac{1}{2}$ and $b_2=N + \frac{1}{2}$,
\begin{align*}
\int^\infty_1|\nabla G^M_{1,k}(\sqrt{\Delta})(x,y)|^2k^{4M-3}
\le \int_1^{d(x,y)^{-1}}  C k^{2N-4M-2b_1+2}d(x,y)^{-2b_1} k^{4M-3}dk \\+
\int_{d(x,y)^{-1}}^{\infty}  C k^{2N-4M-2b_2+2} d(x,y)^{-2b_2} k^{4M-3} \le Cd(x,y)^{-2N}.
\end{align*}
Note also that if $X$ is $C^\infty$ vector field on $\mathcal{M}$, uniformly bounded in every $C^m$ norm then 
$$
\int^\infty_1| X \nabla G^M_{1,k}(\sqrt{\Delta})(x,y)|^2k^{4M-3} \, dk
 \le Cd(x,y)^{-2(N+1)}.  
$$
Hence we can use vector-valued standard Calderon-Zygmund approach (with respect to  $L^2([1,\infty),k^{4M-3}dk)$
	to conclude the argument for $\nabla G^M_{1,k}(\sqrt{\Delta})$
	and $k\ge 1$, see for example \cite[\S 6.4, p. 28 ]{can23}.

Next we consider the family of operators $\nabla H^M_{r,k}(\sqrt{\Delta})$.
By Minkowski's integral inequality
\begin{align*}
 &\bigg(\int_{1}^\infty \bigg|\int_{\mathcal{M}} K_{\nabla H^M_{1,k}(\sqrt{\Delta})}(x,y)f(y)dy \bigg|^2  k^{4M-3}dk \bigg)^{\frac12}\\
 & \hspace*{2in}\le \int_{\mathcal{M}}\int_{1}^\infty \bigg(\bigg|\ K_{\nabla H^M_{1,k}(\sqrt{\Delta})}(x,y) \bigg|^2  k^{4M-3}dk \bigg)^{\frac12}
|f(y)|dy \\ & \hspace*{2in} =\int_{\mathcal{M}}h_*(x,y)
|f(y)|dy,
\end{align*}
where
$$
h_*(x,y)=\bigg(\int_{1}^\infty\bigg|\ K_{\nabla H^M_{1,k}(\sqrt{\Delta})}(x,y) \bigg|^2  k^{4M-3}dk \bigg)^{\frac12}
$$
By Schur's integral test, to conclude the proof it is now enough to show that there exists a constant $C$ such that 
\begin{eqnarray}\label{xx}
\sup_y \int_{\mathcal{M}}h_*(x,y) dx \le C
\end{eqnarray}
and 
\begin{eqnarray}\label{yy}
\sup_x \int_{\mathcal{M}}h_*(x,y) dy \le C.
\end{eqnarray}
Here we again use the approach described in 
\cite{hs2019}. 
First we note that finite propagation speed implies that 
$$
K_{\nabla H^M_{{r_1},k}(\sqrt{\Delta})}(x,y)=K_{\nabla H^M_{{r_2},k}(\sqrt{\Delta})}(x,y) \text{ if } x\notin B(y,r_2), \quad r_2 \geq {r_1} > 0 .
$$ 
It follows that  for every  $r>1$
\begin{equation}\begin{aligned}
\int_{x\notin B(y,r)}|K_{ \nabla H^M_{{1},k}(\sqrt{\Delta})}(x,y)|^2dx &=
\int_{x\notin B(y,r)}|K_{\nabla H^M_{{r},k}(\sqrt{\Delta})}(x,y)|^2dx \\ &\le
\int_{x \in M}|K_{\nabla H^M_{{r},k}(\sqrt{\Delta})}(x,y)|^2dx  \\
&= \Big\langle \nabla_x K_{H^M_{{r},k}(\sqrt{\Delta})}(\cdot,y), \nabla _x K_{H^M_{{r},k}(\sqrt{\Delta})}(\cdot,y) \Big\rangle \\
&= \Big\langle \Delta K_{H^M_{{r},k}(\sqrt{\Delta})}(\cdot,y), K_{H^M_{{r},k}(\sqrt{\Delta})}(\cdot,y) \Big\rangle \\
&= \int_{x \in M}|K_{ \sqrt{\Delta} H^M_{r,k}(\sqrt{\Delta})}(x,y)|^2dx.
\end{aligned}
\end{equation}
A straightforward argument, see \cite[Proposition~2.4]{hs2019}, 
shows that $\|(I+\Delta)^{-n}\|^2_{2 \to \infty} < \infty$ for any $n \ge [N/4]+1$. Hence by \eqref{H} for some $c>0$
\begin{eqnarray*}
	\sup_y \int_{x \in M}|K_{\sqrt{\Delta} H^M_{r,k}(\sqrt{\Delta})}(x,y)|^2dx &=&
	\big\|\sqrt{\Delta} H^M_{{r},k}(\sqrt{\Delta})\big\|^2_{1\to 2} =  \big\|\sqrt{\Delta}H^M_{{r},k}(\sqrt{\Delta})\big\|^2_{2 \to \infty}\\
	&\le&  \big\|  (I+\Delta)^{ n}  \sqrt{\Delta} H^M_{{r},k}(\sqrt{\Delta}) \big\|^2_{2  \to 2}   \big\|(I+\Delta)^{-n}\big\|^2_{2 \to \infty} \\[6pt]
	&\leq& C \sup_{\lambda \geq 0} 
	\big|(1+\lambda^2)^n {\lambda}H^M_{{r},k}(\lambda)\big|^2 \\
	&\le& C e^{-ck r}. 
\end{eqnarray*}
Now we can conclude that 
\begin{equation}
\sup_y\int_{x\notin B(y,r)} \int_{1}^\infty\bigg|\ K_{\nabla H^M_{1,k}(\sqrt{\Delta})}(x,y) \bigg|^2  k^{4M-3}dkdx \lesssim   e^{-cr}
\label{squared}\end{equation}
This proves \eqref{xx} since it implies in particular that 
$$
\sup_y\int_{r \leq d(x,y) \leq 2r} h_*(x,y)^2 dx \lesssim  e^{-cr}.
$$
The measure of the set $\{x \in M \mid r \leq d(x,y) \leq 2r \}$ is
bounded by $C r^N$, $r \geq  \tilde{r} = 1$, where $N$ is the dimension of $\mathcal{M}$, uniformly in $y \in \mathcal{M}$.
So we can apply H\"older's inequality to find that 
$$
\sup_y\int_{r \leq d(x,y) \leq 2r} h_*(x,y) dx \lesssim  e^{-cr}.
$$
Then the above  estimates can  be summed over a sequence of dyadic annuli to obtain \eqref{xx}. 
%Following  \cite{hs2019} we note that finite propagation speed implies that 

To prove \eqref{yy} we use the notion of the Hodge Laplacian. Recall that for any bounded Borel function $F$ if we write $\Delta_q$ for the
Hodge Laplacian acting on $q$-forms then we can write $d F(\sqrt{ \Delta_0}) =  F(\sqrt{ \Delta_1}) d$. This means that to verify \eqref{yy} it is enough 
to prove \eqref{xx} for $H^M_{{r},k}(\sqrt{\Delta_1})$. This can be
achieved by essentially repeating the above argument for $\Delta_0=\Delta$. See 
\cite{hs2019} for more detailed calculations. Now the continuity of the off-diagonal part of high energy square 
function follows from \eqref{xx}, \eqref{yy} and  Minkowski integral inequality. 
\end{proof} 

 \section{The Reverse Inequality}
 \label{sec:Reverse}

 In this section, the reverse inequality portion of Theorem
 \ref{thm:Main} will be proved. That is, it will be shown that $\norm{f}_{p} \lesssim \norm{S
   f}_{p}$ for any $p \in (n_{min}',n_{min})$. This will be achieved
 using, by now, standard arguments that can be found in
{\cite[Thm.~7.1]{auscher2007necessary}}. By the
functional calculus of $\Delta$ on $L^{2}$, for $f \in L^{p} \cap L^{2}$ we have the resolution of the
identity,
$$
f = c_{M} \int^{\infty}_{0} t^{2} \Delta (1 + t^{2} \Delta)^{-2 M} f \, \frac{dt}{t}
$$
for some constant $c_{M} > 0$ that is dependent on $M$. Then for any
$g \in L^{p'}$, Fubini-Tonelli's theorem, whose application will be
justified retrospectively, implies 
\begin{align*}\begin{split}  
 \int_{\mathcal{M}} f \cdot \overline{g}  &\simeq \int_{\mathcal{M}} \br{
 \int^{\infty}_{0} t^{2} \Delta (1 + t^{2} \Delta)^{-2 M} f(x) \,
 \frac{dt}{t}} \overline{g(x)} \, dx \\
 &= \int^{\infty}_{0} \int_{\mathcal{M}} t^{2} \Delta (1 + t^{2}
 \Delta)^{-2 M} f(x) \cdot \overline{g(x)} \, dx \, \frac{dt}{t} \\
 &= \int^{\infty}_{0} \int_{\mathcal{M}} t \nabla (1 + t^{2}
 \Delta)^{-M}f(x) \overline{t \nabla (1 + t^{2} \Delta)^{-M}g(x)} \, dx \, \frac{dt}{t}.
\end{split}\end{align*}
On applying Fubini-Tonelli once more, followed by Cauchy-Schwarz and
H\"{o}lder's inequality,
\begin{align*}\begin{split}  
  \int_{\mathcal{M}} f \cdot \overline{g} &= \int_{\mathcal{M}}
  \int^{\infty}_{0} t \nabla (1 + t^{2} \Delta)^{-M}f(x) \overline{t
    \nabla (1 + t^{2} \Delta)^{-M}g(x)} \, \frac{dt}{t} \, dx \\
  &\leq \int_{\mathcal{M}} S f(x) \cdot S g(x) \, dx \\
  &\leq \norm{S f}_{p} \norm{S g}_{p'}.
\end{split}\end{align*}
Since $S$ is bounded on both $L^{p}$ and $L^{p'}$ for $p \in
(n_{min}',n_{min})$ it follows that this quantity is finite, thereby
retrospectively justifying our two previous applications of the
Fubini-Tonelli theorem. Moreover, the $L^{p'}$-boundedness of $S$ in
particular implies
$$
 \int_{\mathcal{M}} f \cdot \overline{g}  \lesssim \norm{S f}_{p} \norm{g}_{p'},
 $$
 and thus
 $$
\norm{f}_{p} \lesssim \norm{S f}_{p}
$$
for all $f \in L^{p} \cap L^{2}$.
As $S$ is bounded on $L^{p}$, this estimate must also hold true for
all $f \in L^{p}$ by density.

 \section{Unboundedness for $p \geq n_{min}$}
 \label{sec:Unboundedness}

Throughout this section we impose the restriction $2 M <
n_{min}$ and provide a proof of the unboundedness portion of Theorem
\ref{thm:Main}. The upper restriction on the order of the resolvent will allow us to utilize Proposition \ref{prop:DerivativeAt0},
which will form a key component of the proof.
Our proof will exploit some of the ideas utilized in the unboundedness
argument of {\cite[Sec.~6]{hs2019}}. However, a number of difficulties
will arise due to the quadratic nature of the square function and the
higher-order degree of the resolvent.

\vspace*{0.1in}

 In order to prove unboundedness of the square
 function operator on $L^{p}$ for $p \geq n_{min}$
 it is clearly sufficient to prove unboundedness of the low energy
 square function. Recall from our proof of the boundedness portion of Theorem
 \ref{thm:Main} that the components $S^{2}_{<}$ and $S^{4}_{<}$ are
 bounded on $L^{p}(\mathcal{M})$ for all $p \in (1,\infty)$. The
 unboundedness of the operator $S_{<}$ will thus follow directly from
 the unboundedness of the operator
 \begin{align*}\begin{split}
S_{<}^{1 + 3}f(x) &:= \br{\int^{1}_{0} \abs{\nabla (H_{1}(k) +
    H_{3}(k))f(x)}^{2} k^{4M - 3} \, dk}^{\frac{1}{2}} \\
&= \br{\int^{1}_{0} \brs{\int_{\mathcal{M}} (\nabla H_{1}(k)(x,y) +
    \nabla H_{3}(k)(x,y)) f(y) \, dy}^{2} k^{4 M - 3} \, dk}^{\frac{1}{2}}.
 \end{split}\end{align*}
Recall from the proof of the boundedness portion of Theorem \ref{thm:Main} that the operator $S^{1}_{<}$
divides into two parts, $\Lambda^{i}$ and $\Pi^{i}$, depending on whether the gradient hits the
resolvent factor or the function $\phi_{i}$. The term where the gradient hits the
resolvent $\Lambda_{i}$ was proved to be bounded on $L^{p}$ for all $p \in (1,\infty)$. It is
therefore sufficient to prove unboundedness of the operator
\begin{align}\begin{split}
    \label{eqtn:Xi}
 &\Bigg(\int^{1}_{0} \Bigg[\sum_{i = 1}^{l} \int_{\mathcal{M}} \nabla
    \phi_{i}(x) (\Delta_{\R^{n_{i}} \xx \mathcal{M}_{i}} +
    k^{2})^{-M}(x,y) \phi_{i}(y) f(y) + \xi_{i}(k)(x,y) f(y)  \\ & \qquad
\qquad \qquad   + \nabla u_{i}(x,k)
    (\Delta_{\R^{n_{i}} \xx \mathcal{M}_{i}} +
    k^{2})^{-M}(x_{i}^{\circ},y) \phi_{i}(y) f(y) \, dy \Bigg]^{2}
    k^{4 M - 3} \, dk \Bigg)^{\frac{1}{2}},
  \end{split}\end{align}
where $\xi_{i}(k)(x,y)$ is defined through
$$
\xi_{i}(k)(x,y) :=  \sum_{j = 0}^{M - 2} \frac{(-1)^{M + j - 1}}{(M -
  j - 1)!} (\Delta_{\R^{n_{i}} \xx \mathcal{M}_{i}} + k^{2})^{-(j +
  1)}(x_{i}^{\circ},y) \nabla u_{i}^{(M - 1 - j)}(x,k) \phi_{i}(y).
$$
Let $\tau$ be a nonnegative function, not identically zero, that is
compactly supported on one of the ends. It is clear that the unboundedness of the
operator \eqref{eqtn:Xi} will be implied by the unboundedness of the operator
\begin{align}\begin{split}
    \label{eqtn:Xi1}
 &\tau(x) \Bigg(\int^{1}_{0} \Bigg[\sum_{i = 1}^{l} \int_{\mathcal{M}} \nabla
    \phi_{i}(x) (\Delta_{\R^{n_{i}} \xx \mathcal{M}_{i}} +
    k^{2})^{-M}(x,y) \phi_{i}(y) f(y) + \xi_{i}(k)(x,y) f(y)  \\ & \qquad
\qquad \qquad   + \nabla u_{i}(x,k)
    (\Delta_{\R^{n_{i}} \xx \mathcal{M}_{i}} +
    k^{2})^{-M}(x_{i}^{\circ},y) \phi_{i}(y) f(y) \, dy\Bigg]^{2} k^{4 M - 3} \, dk\Bigg)^{\frac{1}{2}}.
  \end{split}\end{align}
Let $\xi_{i}(k)$ be the operator corresponding to the kernel
$\xi_{i}(k)(x,y)$ and define
$$
\Xi f(x) := \tau(x) \br{\int^{1}_{0} \brs{\sum_{i = 1}^{l}
    \xi_{i}(k)f(x)}^{2} k^{4 M - 3} \, dk}^{\frac{1}{2}}.
$$
If it can be proved that $\Xi$ is bounded on $L^{p}$ for any $p \in
(2,\infty)$ then the task of proving the unboundedness of the
operator \eqref{eqtn:Xi1} on $L^{p}$ for $p \geq n_{min}$
will be reduced to proving the unboundedness of the operator
  \begin{align}\begin{split}
\label{eqtn:XiRemoved}
 &\tau(x) \Bigg(\int^{1}_{0} \Bigg[\sum_{i = 1}^{l} \int_{\mathcal{M}} \nabla
    \phi_{i}(x) (\Delta_{\R^{n_{i}} \xx \mathcal{M}_{i}} +
    k^{2})^{-M}(x,y) \phi_{i}(y) f(y)  \\ & \qquad
\qquad \qquad   + \nabla u_{i}(x,k)
    (\Delta_{\R^{n_{i}} \xx \mathcal{M}_{i}} +
    k^{2})^{-M}(x_{i}^{\circ},y) \phi_{i}(y) f(y) \, dy\Bigg]^{2} k^{4 M - 3} \, dk\Bigg)^{\frac{1}{2}}.
  \end{split}\end{align}
To this end, the triangle inequality tells us that the $L^{p}$-boundedness of $\Xi$
will follow from the $L^{p}$-boundedness of the operators
$$
\Xi^{j}_{i}f(x) := \br{\int^{1}_{0} \abs{\xi^{j}_{i}(k)f(x)}^{2} k^{4M -
    3} \, dk}^{\frac{1}{2}},
$$
for each $1 \leq i \leq l$ and $0 \leq j \leq M - 2$, where
$$
\xi^{j}_{i}(k)(x,y) := \tau(x)
\br{\Delta_{\R^{n_{i}} \xx \mathcal{M}_{i}} + k^{2}}^{-(j +
  1)}(x_{i}^{\circ},y) \nabla u_{i}^{(M - 1 - j)}(x,k) \phi_{i}(y).
$$
This operator can then itself be controlled from above by
\begin{align*}\begin{split}  
    \Xi^{j}_{i}f(x) &\leq
    \br{\int^{1}_{0} \abs{\xi^{j,1}_{i}(k)f(x)}^{2}
  k^{4M - 3} \, dk}^{\frac{1}{2}} + \br{\int^{1}_{0}
  \abs{\xi^{j,2}_{i}(k)f(x)}^{2} k^{4M - 3} \, dk}^{\frac{1}{2}} \\
&=: \Xi^{j,1}_{i}f(x) + \Xi^{j,2}_{i}f(x),
 \end{split}\end{align*}
where
$$
\xi^{j,1}_{i}(k)(x,y) := \tau(x)
(\Delta_{\R^{n_{i}} \xx \mathcal{M}_{i}} + k^{2})^{-(j + 1)}(x_{i}^{\circ},y)
\abs{\nabla u_{i}^{(M - 1 - j)}(x,k) - \nabla u_{i}^{(M - 1 -
    j)}(x,0)} \phi_{i}(y)
$$
and
$$
\xi^{j,2}_{i}(k)(x,y) := \tau(x) \br{\Delta_{\R^{n_{i}} \xx \mathcal{M}_{i}} +
k^{2}}^{-(j + 1)}(x_{i}^{\circ},y) \abs{\nabla u_{i}^{(M - 1 -
  j)}(x,0)} \phi_{i}(y).
$$

\begin{lem} 
 \label{lem:Xij1} 
For any $0 \leq j \leq M - 1$, $1 \leq i \leq l$ and $p \in (2,\infty)$, the operator $\Xi_{i}^{j,1}$ is bounded on $L^{p}$.
 \end{lem}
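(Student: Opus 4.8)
The plan is to dominate $\Xi_i^{j,1}$ pointwise by a positive integral operator and then check that its kernel is $L^p$-bounded. Writing $\kappa_i^{j,1}(x,y) := \br{\int_0^1 \abs{\xi_i^{j,1}(k)(x,y)}^2 k^{4M-3}\,dk}^{1/2}$, Minkowski's integral inequality (with respect to the measure $k^{4M-3}\,dk$ on $(0,1)$ and exponent $2$) immediately gives $\Xi_i^{j,1}f(x) \le \int_{\mathcal{M}} \kappa_i^{j,1}(x,y)\abs{f(y)}\,dy$, so everything reduces to an estimate for $\kappa_i^{j,1}$.

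For that estimate I would combine two inputs. Since the standing hypothesis $2M<n_{min}$ forces $M-1-j < \tfrac{n_{min}}{2}-1$ for every $0\le j\le M-1$, Proposition~\ref{prop:DerivativeAt0} applies to $u_i$ with index $M-1-j$ and yields $\abs{\nabla u_i^{(M-1-j)}(x,k) - \nabla u_i^{(M-1-j)}(x,0)} \lesssim k^{2j-2M+3}$ uniformly in $x$. For the other factor, $x_i^\circ$ is interior to $K_i$ while $\mathrm{supp}\,\phi_i \subset \R^{n_i}\xx\mathcal{M}_i\setminus K_i$, so $d(x_i^\circ,y)\gtrsim 1$ on $\mathrm{supp}\,\phi_i$ and Proposition~\ref{prop:EndResolvent} bounds $\br{\Delta_{\R^{n_i}\xx\mathcal{M}_i}+k^2}^{-(j+1)}(x_i^\circ,y)\phi_i(y)$ by $k^{-2j}\langle d(x_i^\circ,y)\rangle^{2-n_i}e^{-ck d(x_i^\circ,y)}\phi_i(y)$. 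Multiplying, the $k^{-2j}$ cancels the $k^{2j}$ and the $j$-dependence disappears: $\abs{\xi_i^{j,1}(k)(x,y)} \lesssim \tau(x)\,k^{3-2M}\langle d(x_i^\circ,y)\rangle^{2-n_i}e^{-ck d(x_i^\circ,y)}\phi_i(y)$. Squaring, inserting $k^{4M-3}$ (which leaves the power $k^{3}$), and using $\int_0^1 k^3 e^{-2ck d(x_i^\circ,y)}\,dk \lesssim \langle d(x_i^\circ,y)\rangle^{-4}$, I expect to land on $\kappa_i^{j,1}(x,y) \lesssim \tau(x)\,\langle d(x_i^\circ,y)\rangle^{-n_i}\phi_i(y)$.

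From there the conclusion is routine: $\tau$ is compactly supported and the $y$-integral $\int_{\mathcal{M}}\langle d(x_i^\circ,y)\rangle^{-n_i}\phi_i(y)\abs{f(y)}\,dy$ does not depend on $x$, so H\"older's inequality in $y$ gives $\norm{\Xi_i^{j,1}f}_p \lesssim \norm{\tau}_p\,\norm{f}_p\,\norm{\langle d(x_i^\circ,\cdot)\rangle^{-n_i}\phi_i}_{p'}$, and by the asymptotic volume growth of the end $\norm{\langle d(x_i^\circ,\cdot)\rangle^{-n_i}\phi_i}_{p'}^{p'}\lesssim \int_1^\infty r^{-n_i p'+n_i-1}\,dr<\infty$ whenever $p'>1$. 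Hence $\Xi_i^{j,1}$ is bounded on $L^p$ for all $p\in(1,\infty)$, in particular for $p\in(2,\infty)$.

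I do not anticipate a genuine obstacle; the one thing to watch is the bookkeeping of powers of $k$. When $M$ is large and $j$ small the increment $\nabla u_i^{(M-1-j)}(\cdot,k)-\nabla u_i^{(M-1-j)}(\cdot,0)$ really does blow up like a negative power of $k$ as $k\to 0$, and it is precisely the $k^{-2j}$ singularity of the $(j+1)$-th resolvent together with the $k^{4M-3}$ weight of the square function that absorb it; confirming that Proposition~\ref{prop:DerivativeAt0} is applicable here -- exactly the place where the standing assumption $2M<n_{min}$ is used -- is the only structural hypothesis one must not overlook.
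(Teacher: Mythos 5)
Your proof is correct, and it takes a genuinely different route from the paper's, arriving at a slightly stronger conclusion along the way. The paper treats $\xi_i^{j,1}(k)$ as a rank-one operator $a_j\langle b_j,\cdot\rangle$, bounds $\norm{a_j}_p\lesssim k^{3+2j-2M}$ by Proposition~\ref{prop:DerivativeAt0} and $\norm{b_j}_{p'}$ by a three-case analysis in $p$ versus $n_i/2$, and then applies Minkowski's integral inequality in the $L^{p/2}(\mathcal{M})$ norm, $\norm{\br{\int_0^1\abs{g_k}^2\,k^{4M-3}dk}^{1/2}}_p \le \br{\int_0^1\norm{g_k}_p^2\,k^{4M-3}dk}^{1/2}$, which is where the restriction $p/2>1$, i.e.\ $p\in(2,\infty)$, enters. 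You instead apply Minkowski pointwise in $x$ to the $L^2(k^{4M-3}dk)$ norm -- which is valid for any $p$ -- and then estimate the resulting kernel $\kappa_i^{j,1}(x,y)$ directly, exploiting that it factorizes as $\tau(x)\,\langle d(x_i^\circ,y)\rangle^{-n_i}\phi_i(y)$. H\"older in $y$ then closes the argument for all $p\in(1,\infty)$, so the case-splitting in $\norm{b_j}_{p'}$ disappears. Both approaches rest on the same two inputs (the $L^\infty$ increment bound \eqref{eqtn:DerivativeAt02} from Proposition~\ref{prop:DerivativeAt0}, valid since $2M<n_{\min}$ gives $M-1-j<\frac{n_{\min}}{2}-1$, and the resolvent bound from Proposition~\ref{prop:EndResolvent}); since $\tau$ is compactly supported the $L^\infty$ bound converts freely to the $L^p$ bound the paper uses, so the two are interchangeable here. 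Your version is a bit more economical and removes the $p>2$ restriction, though of course $p\in(2,\infty)$ is all that is needed where the lemma is applied.
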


\begin{proof}
Minkowski's integral inequality, that can be applied since $\frac{p}{2} > 1$,
implies that
\begin{align}\begin{split}  
\label{eqtn:Xiij1}
 \norm{\Xi^{j,1}_{i}f}_{p} &= \norm{\int^{1}_{0}
   \abs{\xi^{j,1}_{i}(k)f}^{2} k^{4 M - 3} \,
   dk}^{\frac{1}{2}}_{\frac{p}{2}} \\
 &\leq \br{\int^{1}_{0}
   \norm{\abs{\xi_{i}^{j,1}(k)f}^{2}}_{\frac{p}{2}} k^{4 M - 3} \,
   dk}^{\frac{1}{2}} \\
 &= \br{\int^{1}_{0} \norm{\xi_{i}^{j,1}(k)}^{2}_{p} k^{4 M - 3} \,
   dk}^{\frac{1}{2}} \norm{f}_{p}.
 \end{split}\end{align}
For fixed $k$,
$\xi^{j,1}_{i}(k)$ is a rank one operator given by $\xi^{j,1}_{i}(k) =
a_{j} \cdot \langle b_{j}, \cdot \rangle$ with
$$
a_{j}(x) = \tau(x) \abs{\nabla u_{i}^{(M - 1 - j)}(x,k) - \nabla u_{i}^{(M - 1 -
    j)}(x,0)}
$$
and
$$
b_{j}(y) := \br{\Delta_{\R^{n_{i}} \xx \mathcal{M}_{i}} + k^{2}}^{-(j +
  1)}(x_{i}^{\circ},y) \phi_{i}(y).
$$
From Proposition \ref{prop:DerivativeAt0}, it is clear that
$$
\norm{a_{j}}_{p} \lesssim k^{3 + 2 j - 2 M}.
$$
Let $D > 0$ be such that $d(x_{i}^{\circ},y) \geq D$ for all $y \in
\mathrm{supp} \ \phi_{i}$. Then, in a similar manner to (52) of \cite{hs2019},
\begin{align}\begin{split}  
\label{eqtn:b}
 \norm{b_{j}}_{p'} &= \br{\int_{\R^{n_{i}} \xx \mathcal{M}_{i}}
   \abs{(\Delta_{\R^{n_{i}} \xx \mathcal{M}_{i}} + k^{2})^{-(j +
       1)}(x_{i}^{\circ},y) \phi_{i}(y)}^{p'} \, dy}^{\frac{1}{p'}} \\
 &\leq k^{-2 j} \br{\int_{d(x_{i}^{\circ},y) \geq D}
   \abs{d(x_{i}^{\circ},y)^{2 - n_{i}} \exp(-c k
     d(x_{i}^{\circ},y))}^{p'} \, dy}^{\frac{1}{p'}} \\
 &\lesssim \left\lbrace \begin{array}{c c} k^{\frac{n_{i}}{p} - 2 - 2
                          j} & p > \frac{n_{i}}{2} \\ k^{-2 j} \log
                          \, k & p = \frac{n_{i}}{2} \\ k^{-2 j} & p
                                                                   < \frac{n_{i}}{2}. \end{array} \right.
                                                             \end{split}\end{align}
 We therefore have
\begin{align*}\begin{split}  
\norm{\xi^{j,1}_{i}(k)}_{p} &= \norm{a_{j}}_{p} \cdot \norm{b_{j}}_{p'}
\\
&\lesssim \left\lbrace \begin{array}{c c} k^{\frac{n_{i}}{p} + 1 - 2
                         M} & p > \frac{n_{i}}{2} \\ k^{3 - 2 M }
                         \log \, k & p = \frac{n_{i}}{2} \\ k^{3 - 2
                         M} & p < \frac{n_{i}}{2}. \end{array} \right.
 \end{split}\end{align*}
On applying this estimate to \eqref{eqtn:Xiij1} we find that
$\norm{\Xi_{i}^{j,1}}_{p} < \infty$ for any $p \in (2,\infty)$.
\end{proof}

\begin{lem} 
 \label{lem:Xij2} 
For any $0 \leq j \leq M - 2$, $1 \leq i \leq l$ and $p \in (2,\infty)$, the operator $\Xi_{i}^{j,2}$ is bounded on $L^{p}$.
 \end{lem}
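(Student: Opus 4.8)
The plan is to exploit the rank-one structure of each $\xi^{j,2}_{i}(k)$, exactly as in the proof of Lemma~\ref{lem:Xij1}, the essential difference being that here the factor depending on $x$ is independent of $k$. I would write $\xi^{j,2}_{i}(k) = a_{j}\cdot\langle b_{j}(k,\cdot),\,\cdot\,\rangle$, where
$$
a_{j}(x) := \tau(x)\,\abs{\nabla u_{i}^{(M-1-j)}(x,0)}
\qquad\text{and}\qquad
b_{j}(k,y) := \br{\Delta_{\R^{n_{i}}\xx\mathcal{M}_{i}}+k^{2}}^{-(j+1)}(x_{i}^{\circ},y)\,\phi_{i}(y),
$$
so that $\xi^{j,2}_{i}(k)f(x) = a_{j}(x)\,c(k)$ with $c(k):=\int_{\mathcal{M}}b_{j}(k,y)f(y)\,dy$. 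Since $a_{j}$ does not depend on $k$ it factors out of the $k$-integral, giving $\Xi^{j,2}_{i}f(x) = \abs{a_{j}(x)}\br{\int^{1}_{0}\abs{c(k)}^{2}k^{4M-3}\,dk}^{1/2}$. Because $2M<n_{min}$ and $j\geq 0$, we have $M-1-j\leq M-1<\tfrac{n_{min}}{2}-1$, so Proposition~\ref{prop:DerivativeAt0} applies and $\nabla u_{i}^{(M-1-j)}(\cdot,0)$ is a well-defined function, bounded (in fact smooth) on the compact support of $\tau$; hence $a_{j}\in L^{\infty}$ with compact support and $\norm{a_{j}}_{p}\lesssim 1$ for all $p\in[1,\infty]$.

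Next I would estimate $\abs{c(k)}\leq\norm{b_{j}(k,\cdot)}_{p'}\norm{f}_{p}$ by H\"older, so that
$$
\norm{\Xi^{j,2}_{i}f}_{p}\leq\norm{a_{j}}_{p}\,\norm{f}_{p}\br{\int^{1}_{0}\norm{b_{j}(k,\cdot)}_{p'}^{2}\,k^{4M-3}\,dk}^{1/2},
$$
which reduces everything to showing that the $k$-integral is finite. The norm $\norm{b_{j}(k,\cdot)}_{p'}$ is exactly the quantity already bounded in \eqref{eqtn:b}: using Proposition~\ref{prop:EndResolvent} with resolvent order $j+1$, the fact that $d(x_{i}^{\circ},y)\geq D>0$ on $\mathrm{supp}\,\phi_{i}$, and polar coordinates in the $\R^{n_{i}}$-factor, one obtains $\norm{b_{j}(k,\cdot)}_{p'}\lesssim k^{n_{i}/p-2-2j}$ for $p>\tfrac{n_{i}}{2}$, $\lesssim k^{-2j}\abs{\log k}^{1/p'}$ for $p=\tfrac{n_{i}}{2}$, and $\lesssim k^{-2j}$ for $p<\tfrac{n_{i}}{2}$.

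Finally I would substitute these bounds and check integrability near $k=0$ in each regime; this is where the hypothesis $j\leq M-2$ enters. For $p>\tfrac{n_{i}}{2}$ the integrand is $\lesssim k^{2n_{i}/p+4M-4j-7}$, whose exponent exceeds $-1$ precisely when $\tfrac{n_{i}}{p}>3-2M+2j$, and $j\leq M-2$ gives $3-2M+2j\leq -1<\tfrac{n_{i}}{p}$, so this always holds. For $p\leq\tfrac{n_{i}}{2}$ the integrand is $\lesssim k^{4(M-j)-3}\abs{\log k}^{2/p'}$ (no logarithm when $p<\tfrac{n_{i}}{2}$), and $M-j\geq 2$ forces $4(M-j)-3\geq 5>-1$, so the integral again converges. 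In all cases $\int^{1}_{0}\norm{b_{j}(k,\cdot)}_{p'}^{2}k^{4M-3}\,dk<\infty$, whence $\norm{\Xi^{j,2}_{i}f}_{p}\lesssim\norm{f}_{p}$. I expect the only real obstacle to be the exponent bookkeeping in this last step: in contrast to Lemma~\ref{lem:Xij1}, where the difference $\nabla u_{i}^{(M-1-j)}(\cdot,k)-\nabla u_{i}^{(M-1-j)}(\cdot,0)$ provides a factor vanishing as $k\to 0$, the $k$-independence of $a_{j}$ here leaves all of the decay to be extracted from the higher resolvent order $j+1$ together with the restriction $j\leq M-2$ --- indeed it is precisely the borderline value $j=M-1$, which this lemma excludes, that is later responsible for the unboundedness of $S$ when $p\geq n_{min}$.
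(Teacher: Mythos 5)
Your proposal is correct and takes essentially the same route as the paper: both use the rank-one structure $\xi^{j,2}_i(k) = a_j\otimes b_j(k,\cdot)$, bound $\|a_j\|_p\lesssim 1$ via Proposition~\ref{prop:DerivativeAt0}, apply the estimate \eqref{eqtn:b} for $\|b_j(k,\cdot)\|_{p'}$, and check integrability of the resulting power of $k$ over $(0,1)$, with the restriction $j\leq M-2$ guaranteeing convergence in the borderline case $p>n_i/2$. Your observation that $a_j$ is independent of $k$ and hence factors cleanly out of the $k$-integral avoids the paper's explicit appeal to Minkowski's inequality, and your $|\log k|^{1/p'}$ is a slightly more careful version of the log factor at $p=n_i/2$; these are cosmetic differences, not a different argument.
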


\begin{proof}
As was the case for $\xi^{j,1}_{i}(k)$, for fixed $k$ the operator
$\xi^{j,2}_{i}(k)$ is of rank one. In particular, we have
$\xi^{j,2}_{i}(k) = a'_{j} \cdot \langle b_{j}, \cdot \rangle$ where 
$$
a_{j}'(x) := \tau(x) \abs{\nabla u_{i}^{(M - 1 - j)}(x,0)}
$$
and $b_{j}$ is as defined previously for the operator $\xi^{j,1}_{i}(k)$. It
is obvious from Proposition \ref{prop:DerivativeAt0} that
$\norm{a_{j}'}_{p} \lesssim 1$ uniformly in
$k$. Estimate \eqref{eqtn:b} then implies
\begin{align*}\begin{split}  
 \norm{\xi^{j,2}_{i}(k)}_{p} &\simeq \norm{a_{j}'}_{p} \cdot \norm{b_{j}}_{p'}
 \\
&\lesssim \norm{b_{j}}_{p'} \\
&\lesssim \left\lbrace \begin{array}{c c} k^{\frac{n_{i}}{p} - 2 - 2
                         j} & p > \frac{n_{i}}{2} \\ k^{-2 j} \log \,
                         k & p = \frac{n_{i}}{2} \\ k^{-2 j} & p < \frac{n_{i}}{2}. \end{array} \right.
 \end{split}\end{align*}
For $p > \frac{n_{i}}{2}$, once again by Minkowski's inequality,
\begin{align*}\begin{split}  
 \norm{\Xi^{j,2}_{i}}_{p} &\leq \br{\int^{1}_{0}
   \norm{\xi^{j,2}_{i}(k)}_{p}^{2} k^{4 M - 3} \, dk}^{\frac{1}{2}} \\
&\leq \br{\int^{1}_{0} k^{\frac{2 n_{i}}{p} + 4 (M - j) - 7} \, dk}^{\frac{1}{2}}.
 \end{split}\end{align*}
This will be finite provided that
$$
\frac{2 n_{i}}{p} + 4 (M - j) - 7 > -1,
$$
which is implied by the restriction $j \leq M - 2$. The cases $p =
\frac{n_{i}}{2}$ and $p < \frac{n_{i}}{2}$ proceed similarly thereby
producing $\norm{\Xi^{j,2}_{i}}_{p} < \infty$.
  \end{proof}

Now that the $L^{p}$-boundedness of $\Xi$ has been proved for $p \in (2,\infty)$, it remains
to consider the unboundedness of the operator given in
\eqref{eqtn:XiRemoved}. Notice that
\begin{align*}\begin{split}  
 & \br{\int^{1}_{0} \brs{\int_{\mathcal{M}} \nabla \phi_{i}(x)
     ((\Delta_{\R^{n_{i}} \xx \mathcal{M}_{i}} + k^{2})^{-M}(x,y) -
     (\Delta_{\R^{n_{i}} \xx \mathcal{M}_{i}} +
     k^{2})^{-M}(x_{i}^{\circ},y)) \phi_{i}(y) f(y) \, dy}^{2} k^{4 M
     - 3} \,
   dk}^{\frac{1}{2}} \\
 & \leq \int_{\mathcal{M}} \nabla \phi_{i}(x)
 \brs{\int^{1}_{0} \abs{(\Delta_{\R^{n_{i}} \xx \mathcal{M}_{i}} +
     k^{2})^{-M}(x,y) - (\Delta_{\R^{n_{i}} \xx \mathcal{M}_{i}} +
     k^{2})^{-M}(x_{i}^{\circ},y)}^{2} k^{4 M - 3} \, dk}^{\frac{1}{2}} \phi_{i}(y)
 f(y) \, dy \\
 &=: \int_{\mathcal{M}} p(x,y) f(y) \, dy =: Pf(x).
\end{split}\end{align*}

\begin{lem} 
 \label{lem:OperatorP} 
 The operator $P$ is bounded on $L^{p}$ for all $p \in (1,\infty)$.
\end{lem}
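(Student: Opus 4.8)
The plan is to estimate the kernel $p(x,y)$ pointwise, splitting it into a compactly supported weakly singular piece and a globally decaying piece, and to treat each by an elementary integral test. Write $K' := \mathrm{supp}\,\nabla\phi_{i}$, a compact subset of $\R^{n_{i}} \xx \mathcal{M}_{i} \setminus K_{i}$, set $C_{0} := \sup_{x \in K'} d(x_{i}^{\circ},x) < \infty$, and recall that $d(x_{i}^{\circ},y) \geq D > 0$ for $y \in \mathrm{supp}\,\phi_{i}$. Fix $r_{0} > 2(C_{0}+1)$ and decompose $p = p_{\mathrm{near}} + p_{\mathrm{far}}$ according to whether $d(x,y) \leq r_{0}$ or $d(x,y) > r_{0}$, and correspondingly $P = P_{\mathrm{near}} + P_{\mathrm{far}}$, where $P_{\mathrm{near}}, P_{\mathrm{far}}$ are the operators with kernels $p_{\mathrm{near}}, p_{\mathrm{far}}$.

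For $P_{\mathrm{near}}$, I would apply the triangle inequality inside the $k$-integral and then Proposition \ref{prop:EndResolvent}. Since $d(x,y) \leq r_{0}$ and $x \in K'$ force $d(x_{i}^{\circ},y) \leq C_{0} + r_{0}$, the term containing $(\Delta_{\R^{n_{i}} \xx \mathcal{M}_{i}} + k^{2})^{-M}(x_{i}^{\circ},y)$ contributes at most $k^{-2(M-1)}$, while the term containing $(\Delta_{\R^{n_{i}} \xx \mathcal{M}_{i}} + k^{2})^{-M}(x,y)$ is $\lesssim k^{-2(M-1)}\br{d(x,y)^{2-N}+1}$; since $\int_{0}^{1} k^{-4(M-1)} k^{4M-3}\,dk = \int_{0}^{1} k\,dk < \infty$, this gives
$$
p_{\mathrm{near}}(x,y) \lesssim \mathbbm{1}_{K'}(x)\,\mathbbm{1}_{\{d(x,y) \leq r_{0}\}}\br{d(x,y)^{2-N}+1}.
$$
This kernel is compactly supported in $\mathcal{M}^{2}$ with an on-diagonal singularity of the locally integrable order $2-N$, so Schur's integral test shows $P_{\mathrm{near}}$ is bounded on $L^{p}$ for every $p \in [1,\infty]$.

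For $P_{\mathrm{far}}$, the cancellation in the difference is exploited via the fundamental theorem of calculus along a minimizing geodesic $\gamma$ joining $x_{i}^{\circ}$ to $x$ in $\R^{n_{i}} \xx \mathcal{M}_{i}$. Every $z \in \gamma$ satisfies $d(x_{i}^{\circ},z) \leq d(x_{i}^{\circ},x) \leq C_{0}$, so when $d(x,y) > r_{0}$ — which forces $\rho := d(x_{i}^{\circ},y) > r_{0} - C_{0} > C_{0}$ — we have $d(z,y) \geq \rho - C_{0} > \rho/2 > 1$ uniformly in $z \in \gamma$. The gradient estimate \eqref{eqtn:EndResolvent2} then yields $\abs{\nabla_{z}(\Delta_{\R^{n_{i}} \xx \mathcal{M}_{i}} + k^{2})^{-M}(z,y)} \lesssim k^{-2(M-1)} \rho^{1-n_{i}} e^{-ck\rho/2}$ (the $d(z,y)^{1-N}$ term being dominated since $d(z,y) > 1$), whence
$$
\abs{(\Delta_{\R^{n_{i}} \xx \mathcal{M}_{i}} + k^{2})^{-M}(x,y) - (\Delta_{\R^{n_{i}} \xx \mathcal{M}_{i}} + k^{2})^{-M}(x_{i}^{\circ},y)} \leq d(x_{i}^{\circ},x)\sup_{z \in \gamma}\abs{\nabla_{z}(\Delta_{\R^{n_{i}} \xx \mathcal{M}_{i}} + k^{2})^{-M}(z,y)} \lesssim k^{-2(M-1)} \rho^{1-n_{i}} e^{-ck\rho/2}.
$$
Since $\int_{0}^{1} k\,e^{-ck\rho}\,dk \lesssim \rho^{-2}$ for $\rho \gtrsim 1$, squaring, integrating against $k^{4M-3}$ and taking square roots gives
$$
p_{\mathrm{far}}(x,y) \lesssim \mathbbm{1}_{K'}(x)\,\phi_{i}(y)\,\rho^{-n_{i}} \lesssim \mathbbm{1}_{K'}(x)\,\langle d(x_{i}^{\circ},y) \rangle^{-n_{i}}.
$$
This gain is decisive: the cancellation upgrades the decay from the $\langle d(x_{i}^{\circ},y) \rangle^{1-n_{i}}$ that either term produces on its own — which lies in $L^{p'}$ only for $p < n_{i}$, exactly as for $\Pi^{i}_{2}$ in Section \ref{sec:S1} — to $\langle d(x_{i}^{\circ},y) \rangle^{-n_{i}}$, and $\norm{\langle d(x_{i}^{\circ},\cdot) \rangle^{-n_{i}}}_{L^{p'}(\R^{n_{i}} \xx \mathcal{M}_{i})} < \infty$ precisely when $p' > 1$, i.e. $p < \infty$. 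Hence, for $p \in (1,\infty)$, H\"{o}lder's inequality in $y$ gives $\abs{P_{\mathrm{far}}f(x)} \lesssim \mathbbm{1}_{K'}(x)\norm{f}_{p}$, and $K'$ has finite measure, so $P_{\mathrm{far}}$ is bounded on $L^{p}$; combining with $P_{\mathrm{near}}$ proves the lemma. The delicate point is exactly this far-part estimate: one must verify that the minimizing geodesic stays in $\overline{B(x_{i}^{\circ},C_{0})}$ and hence at distance $\simeq d(x_{i}^{\circ},y)$ from $y$ in the regime where $\phi_{i}(y) \neq 0$ can force that distance to be large, so that \eqref{eqtn:EndResolvent2} applies uniformly along it and produces the borderline decay $\langle d(x_{i}^{\circ},y) \rangle^{-n_{i}}$, which is what carries the boundedness all the way to $p \in (1,\infty)$ rather than stopping at $p < n_{i}$.
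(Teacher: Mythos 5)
Your proof is correct and follows essentially the same route as the paper's: you split the kernel into a near part (local singularity handled by Schur's test) and a far part (cancellation exploited via the fundamental theorem of calculus along a minimizing geodesic from $x_{i}^{\circ}$ to $x$, followed by H\"older in $y$ against $\langle d(x_{i}^{\circ},y)\rangle^{-n_{i}}$). The only cosmetic difference is that the paper splits according to whether $y$ lies in a fixed compact neighbourhood $V$ of $\mathrm{supp}\,\nabla\phi_{i}$ rather than whether $d(x,y)\le r_{0}$, but since $x$ always ranges over the compact set $\mathrm{supp}\,\nabla\phi_{i}$, the two decompositions are interchangeable; the key cancellation estimate and the resulting $\langle d(x_{i}^{\circ},y)\rangle^{-n_{i}}$ decay are identical.
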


\begin{proof}  
Let $V \subset \mathrm{supp} \, \phi_{i}$ be an open subset, compactly
supported, that contains $\mathrm{supp} \, \nabla
\phi_{i}$. For $y \in V$, Proposition \ref{prop:EndResolvent} implies 
\begin{align*}\begin{split}  
 p(x,y) &\lesssim \nabla \phi_{i}(x)
 \br{\int^{1}_{0} \abs{(\Delta_{\R^{n_{i}} \xx \mathcal{M}_{i}} +
     k^{2})^{-M}(x,y)}^{2} k^{4M - 3} \, dk}^{\frac{1}{2}} \phi_{i}(y) 
 \\
 & \hspace*{1in} + \nabla \phi_{i}(x)  \br{\int^{1}_{0} \abs{(\Delta_{\R^{n_{i}} \xx \mathcal{M}_{i}} +
     k^{2})^{-M}(x_{i}^{\circ},y)}^{2} k^{4M - 3} \, dk}^{\frac{1}{2}}
 \phi_{i}(y)  \\
 & \lesssim \nabla \phi_{i}(x) d(x,y)^{2 - N} \br{\int^{1}_{0}
   k \exp(-2 c k d(x,y)) \, dk}^{\frac{1}{2}} \phi_{i}(y)  \\
 & \hspace*{1in} + \nabla \phi_{i}(x) d(x_{i}^{\circ},y)^{2 -
   n_{i}} \br{\int^{1}_{0} k \exp(-2 c k d(x_{i}^{\circ},y)) \,
   dk}^{\frac{1}{2}} \phi_{i}(y)  \\
 & \lesssim \nabla \phi_{i}(x) \br{d(x,y)^{1 - N} +
   d(x_{i}^{\circ},y)^{1 - n_{i}}} \phi_{i}(y).
\end{split}\end{align*}
Schur's test then tells us that the operator with kernel $p(x,y)
\mathbbm{1}_{V}(y)$ is bounded on $L^{p}$ for all $p \in (1,\infty)$.

On the other hand, for $y \in \mathrm{supp} \, \phi_{i} \setminus V$,
Proposition \ref{prop:EndResolvent} and the mean value theorem give
\begin{align*}\begin{split}  
& \nabla \phi_{i}(x) \br{\int^{1}_{0} \abs{(\Delta_{\R^{n_{i}} \xx \mathcal{M}_{i}} + k^{2})^{-M}(x,y) - (\Delta_{\R^{n_{i}} \xx
       \mathcal{M}_{i}} + k^{2})^{-M}(x_{i}^{\circ},y)}^{2} k^{4 M - 3} \,
   dk}^{\frac{1}{2}} \phi_{i}(y) \\ & \qquad  \qquad \lesssim \nabla
 \phi_{i}(x) \br{\int^{1}_{0} d(x_{i}^{\circ},x)^{2} \brs{k^{4 - 4 M}\langle
 d(x_{i}^{\circ},y) \rangle^{-2(n_{i} - 1)} \exp(-2 c k
 d(x_{i}^{\circ},y))} k^{4M - 3} \, dk}^{\frac{1}{2}} \phi_{i}(y) \\
&  \qquad \qquad \lesssim \nabla \phi_{i}(x) \langle
d(x_{i}^{\circ},y) \rangle^{-(n_{i} - 1)} \br{\int^{1}_{0} k \exp(- 2 c k
  d(x_{i}^{\circ},y)) \, dk}^{\frac{1}{2}} \phi_{i}(y) \\
&  \qquad \qquad \lesssim \nabla \phi_{i}(x) \langle d(x_{i}^{\circ},y)
\rangle^{-n_{i}} \phi_{i}(y).
\end{split}\end{align*}
The kernel $\mathbbm{1}_{\mathrm{supp} \, \phi_{i} \setminus V}(y) p(x,y)$ is therefore compactly supported in $x$ and it decays to
order $n_{i}$ in $y$. From the argument used to prove the boundedness
of $S_{<}^{3}$, it is clear that the corresponding operator
 will be bounded on
$L^{p}$ for all $p \in (1,\infty)$. This demonstrates that $P$ is
bounded on $L^{p}$ for all $p \in (1,\infty)$. 
 \end{proof}

From the above lemma, it is clear that the unboundedness of the
operator in \eqref{eqtn:XiRemoved} will follow from the unboundedness of
the operator
$$
 \tau(x) \br{\int^{1}_{0} \brs{\sum_{i = 1}^{l} \int_{\mathcal{M}} (\nabla
    \phi_{i}(x) + \nabla u_{i}(x,k)) (\Delta_{\R^{n_{i}} \xx \mathcal{M}_{i}} +
    k^{2})^{-M}(x_{i}^{\circ},y) \phi_{i}(y) f(y) \, dy}^{2} k^{4M - 3} \, dk}^{\frac{1}{2}}.
$$
Choose $i$ such that $n_{i}$ is minimal. Then, in an identical manner
to \cite{hs2019}, the unboundedness of the above operator can be further
reduced to proving the unboundedness of the operator
$$
\tau(x) \cdot \br{\int^{1}_{0} \brs{\int_{\mathcal{M}}
     (\nabla \phi_{i}(x) + \nabla u_{i}(x,k)) (\Delta_{\R^{n_{i}} \xx
       \mathcal{M}_{i}} + k^{2})^{-M}(x_{i}^{\circ},y) \phi_{i}(y)
     f(y) \, dy}^{2} k^{4 M - 3} \, dk}^{\frac{1}{2}}.
$$
From an application of Lemma \ref{lem:Xij1} for the case $j = M - 1$,
it is evident that in order to
deduce that the above operator is unbounded on $L^{p}$ for $p \geq
n_{min}$ it is sufficient to show that
 $$
Rf(x) := \br{\int^{1}_{0} \abs{r(k)f(x)}^{2} k^{4 M - 3} \, dk}^{\frac{1}{2}}
 $$
 is unbounded on $L^{p}$, where $r(k)$ is the operator with kernel
 defined through
$$
r(k)(x,y) := \tau(x) (\nabla \phi_{i}(x) + \nabla u_{i}(x,0))
(\Delta_{\R^{n_{i}} \xx \mathcal{M}_{i}} +
k^{2})^{-M}(x_{i}^{\circ},y) \phi_{i}(y).
$$
 This final claim will be proved in the below lemma.

 \begin{lem} 
 \label{lem:R2} 
 The operator $R$ is unbounded on $L^{p}$ for any $p \geq \min_{i}
 n_{i}$.
\end{lem}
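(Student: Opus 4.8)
The plan is to exploit the fact that, for each fixed $k$, the operator $r(k)$ has \emph{rank one}. Writing $\psi(x) := \tau(x)\br{\nabla\phi_{i}(x) + \nabla u_{i}(x,0)}$ and $\beta_{k}(y) := \br{\Delta_{\R^{n_{i}}\xx\mathcal{M}_{i}} + k^{2}}^{-M}(x_{i}^{\circ},y)\phi_{i}(y)$, we have $r(k)f(x) = \psi(x)\,\langle\beta_{k},f\rangle$, so $\abs{r(k)f(x)} = \abs{\psi(x)}\,\abs{\langle\beta_{k},f\rangle}$ and the $k$-integral factors as $Rf(x) = \abs{\psi(x)}\cdot Q(f)$ with $Q(f) := \br{\int_{0}^{1}\abs{\langle\beta_{k},f\rangle}^{2}k^{4M-3}\,dk}^{1/2}$. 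Hence $\norm{Rf}_{p} = \norm{\psi}_{p}\,Q(f)$, and the first step is to check that $0 < \norm{\psi}_{p} < \infty$, reducing the claim to the unboundedness of the scalar functional $Q$ on $L^{p}$. Finiteness of $\norm{\psi}_{p}$ is immediate since $\psi$ carries the factor $\tau$ and is therefore compactly supported, while $\nabla u_{i}(\cdot,0)$ is bounded by Proposition \ref{prop:DerivativeAt0} (applicable because $2M < n_{min}$). For non-triviality I would note that, again by Proposition \ref{prop:DerivativeAt0}, $\Delta u_{i}(\cdot,0) = -\Delta\phi_{i}$, so $\phi_{i}+u_{i}(\cdot,0)$ is a bounded harmonic function equal to $1$ at infinity along $E_{i}$ and decaying to $0$ along the other ends; being non-constant, its gradient cannot vanish on any nonempty open subset of the connected set $E_{i}$, so the cut-off $\tau$ may be chosen so that $\psi\not\equiv 0$.

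Next I would establish a pointwise lower bound for $\beta_{k}$. Since $2M < n_{i} \le N$, Corollary \ref{cor:LowerEndResolvent} with $j = M$ yields, for $y$ far out on $E_{i}$ (where $\phi_{i}\equiv 1$ and $d(x_{i}^{\circ},y)\ge 1$), the estimate $\beta_{k}(y)\gtrsim d(x_{i}^{\circ},y)^{2M-n_{i}}\,e^{-ck\,d(x_{i}^{\circ},y)}$. Identifying $E_{i}$ with $\R^{n_{i}}\xx\mathcal{M}_{i}\setminus K_{i}$, so that $d(x_{i}^{\circ},\cdot)$ is comparable to the Euclidean radius and $\mathcal{M}_{i}$ contributes a bounded factor to volumes, I would consider dyadic annuli $A_{m} := \lb y\in E_{i} : 2^{m}\le d(x_{i}^{\circ},y)\le 2^{m+1}\rb$, which satisfy $\abs{A_{m}}\simeq 2^{mn_{i}}$, and observe that for $k\in(2^{-m-1},2^{-m})$ the exponential is comparable to $1$ on $A_{m}$, giving $\langle\beta_{k},\mathbbm{1}_{A_{m}}\rangle\gtrsim 2^{m(2M-n_{i})}\cdot 2^{mn_{i}} = 2^{2Mm}$.

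The unboundedness of $Q$ then follows by testing against suitable combinations of these annuli. For $p > n_{min}$ a single annulus suffices: with $f = 2^{-Jn_{i}/p}\mathbbm{1}_{A_{J}}$ one has $\norm{f}_{p}\simeq 1$ and $Q(f)^{2}\ge\int_{2^{-J-1}}^{2^{-J}}\abs{\langle\beta_{k},f\rangle}^{2}k^{4M-3}\,dk\gtrsim 2^{2J(1-n_{i}/p)}\to\infty$. For the critical exponent $p = n_{min} = n_{i}$ I would instead stack $J$ scales, taking $f_{J} = \sum_{m}c_{m}\mathbbm{1}_{A_{m}}$ with $c_{m} = J^{-1/n_{i}}2^{-m}$; this is normalised in $L^{p}$, while disjointness of the $k$-intervals together with the annulus bound gives $Q(f_{J})^{2}\gtrsim\sum_{m}c_{m}^{2}2^{2m} = \sum_{m}J^{-2/n_{i}}\simeq J^{1-2/n_{i}}$, which diverges because $n_{i} = n_{min}\ge 3 > 2$ (using $2M < n_{min}$ and $M\ge 1$). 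The main obstacle is this critical case: the single-scale test function only captures $L^{p}$-unboundedness for $p$ strictly above $n_{min}$, and one must use the logarithmic stacking of annuli, exploiting the sharpness of the elementary inequality $\sum_{m\le J}d_{m}^{2}\lesssim J^{1-2/n_{i}}\br{\sum_{m\le J}d_{m}^{n_{i}}}^{2/n_{i}}$, to reach the endpoint. A secondary point requiring care is the verification that $\psi\not\equiv 0$, i.e. that $\nabla\phi_{i}+\nabla u_{i}(\cdot,0)$ is not identically zero on the end supporting $\tau$.
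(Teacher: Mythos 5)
Your proof is correct and takes a genuinely different route from the paper. The paper works directly with the lower bound from Corollary~\ref{cor:LowerEndResolvent} applied to a power-law test function $f_{\varepsilon}(y) = d(x_{i}^{\circ},y)^{-\frac{n_i}{p}(1+\varepsilon)}\phi_i(y)$, squares $Rf_{\varepsilon}$, applies Tonelli to turn the $k$-integral into a double $(y,z)$-integral, and then for $p > n_i$ shows that $Rf_\varepsilon$ is identically infinite on the support of $a$, while for $p = n_i$ it tracks the asymptotics $\norm{Rf_\varepsilon}_p \gtrsim 1/\varepsilon$ vs.\ $\norm{f_\varepsilon}_p \lesssim \varepsilon^{-1/p}$ as $\varepsilon \to 0$ to obtain a contradiction. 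You instead isolate the rank-one structure of $r(k)$ up front, observing that the $x$-factor $\psi$ is $k$-independent so that $\norm{Rf}_p = \norm{\psi}_p\, Q(f)$ with $Q$ a scalar functional, and then test $Q$ on dyadic annuli: a single annulus handles $p > n_i$, and a geometrically weighted stack of $J$ annuli (normalised in $L^{n_i}$) yields $Q(f_J)^2 \gtrsim J^{1 - 2/n_i} \to \infty$ at the endpoint, using $n_i \geq 3$ which follows from $2M < n_{\min}$. Both approaches are sound; yours has the advantage of reducing to a transparent scalar problem where the endpoint failure is captured by the sharpness of H\"older's inequality on a finite index set, while the paper's approach avoids the need to separate scales. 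You are also more careful than the paper in explicitly verifying $\psi \not\equiv 0$: the paper takes the non-emptiness of $\mathrm{supp}\,a$ for granted, whereas you correctly identify $\phi_i + u_i(\cdot,0)$ as a non-constant bounded harmonic function (which requires $l \geq 2$, an implicit hypothesis of the theorem) and invoke unique continuation. One small correction: the identity $\Delta u_i(\cdot,0) = -\Delta\phi_i$ does not come from Proposition~\ref{prop:DerivativeAt0} but from the construction of $u_i$ as the solution of $(\Delta + k^2)u_i = v_i = -\Delta\phi_i$ in {\cite[Lem.~2.7]{hs2019}}; this does not affect the argument.
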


\begin{proof}
  From the definition of the kernel $r(k)$, for $f \in
  L^{p}$ non-negative and $x \in \mathcal{M}$ we have
  \begin{align*}\begin{split}  
      Rf(x) &= \br{\int^{1}_{0}\abs{r(k)f(x)}^{2} k^{4 M - 3} \,
        dk}^{\frac{1}{2}} \\
      &= a(x) \br{\int^{1}_{0}
        \abs{\int_{\mathcal{M}}(\Delta_{\R^{n_{i}} \xx
            \mathcal{M}_{i}} + k^{2})^{-M}(x_{i}^{\circ},y)
          \phi_{i}(y) f(y) \, dy}^{2} k^{4 M - 3} \, dk}^{\frac{1}{2}},
    \end{split}\end{align*}
  where $a(x) := \tau(x) \abs{\nabla \phi_{i}(x) + \nabla
    u_{i}(x,0)}$. From Corollary \ref{cor:LowerEndResolvent},
  \begin{align*}\begin{split}  
 Rf(x) &\gtrsim a(x) \br{\int^{1}_{0} \br{\int_{\mathcal{M}}
     d(x_{i}^{\circ},y)^{2 M - n_{i}} \exp(-c k d(x_{i}^{\circ},y))
     \phi_{i}(y) f(y) \, dy}^{2} k^{4 M - 3} \, dk}^{\frac{1}{2}} \\
 &= a(x) \br{\int^{1}_{0} \br{\int_{\mathcal{M}}
     d(x_{i}^{\circ},y)^{2 M - n_{i}} \exp(-c k d(x_{i}^{\circ},y))
     \phi_{i}(y) f(y) \, dy} \right. \\ & \qquad \qquad \qquad
 \left. \cdot \br{\int_{\mathcal{M}}
     d(x_{i}^{\circ},z)^{2 M - n_{i}} \exp(-c k d(x_{i}^{\circ},z))
     \phi_{i}(z) f(z) \, dz} k^{4M - 3} \, dk}^{\frac{1}{2}}.
\end{split}\end{align*}
On applying Tonelli's theorem,
\begin{equation}
  \label{eqtn:Tonelli}
 Rf(x) \gtrsim a(x) \br{\int_{\mathcal{M}} \int_{\mathcal{M}}
   d(x_{i}^{\circ},y)^{2M - n_{i}} d(x_{i}^{\circ}, z)^{2M - n_{i}} \eta(y,z)
 \phi_{i}(y) \phi_{i}(z) f(y) f(z) \, dy
   \, dz}^{\frac{1}{2}},
 \end{equation}
 with
 $$
 \eta(y,z) :=    \int^{1}_{0} k^{4 M - 3} \exp (- c k (d(x_{i}^{\circ},y) +
 d(x_{i}^{\circ},z))) \, dk.
 $$
For $y, \, z \in \mathrm{supp} \, \phi_{i}$ the quantity $d := 
(d(x_{i}^{\circ},y) + d(x_{i}^{\circ},z))$ is bounded from
below. Therefore, after a change of variables it can be said that
\begin{align*}\begin{split}  
 \eta(y,z) &= \frac{1}{d^{4 M - 2}} \int^{d}_{0} k^{4 M - 3} \exp
 \br{- c k} \, dk \\
&\geq \frac{1}{d^{4M - 2}} \int^{D}_{0} k^{4 M - 3} \exp \br{- c k} \,
dk \\
&\gtrsim \frac{1}{d^{4 M - 2}},
 \end{split}\end{align*}
where $D > 0$ is some constant independent of $y$ and $z$.
Applying this to \eqref{eqtn:Tonelli},
$$
Rf(x) \gtrsim a(x) \br{\int_{\mathcal{M}} \int_{\mathcal{M}}
  \frac{d(x_{i}^{\circ},y)^{2M - n_{i}} d(x_{i}^{\circ},z)^{2M -
      n_{i}}}{(d(x_{i}^{\circ},y) + d(x_{i}^{\circ},z))^{4 M - 2}}
 \phi_{i}(y) \phi_{i}(z) f(y) f(z) \, dy
   \, dz}^{\frac{1}{2}}.
 $$
 Due to symmetry, we can then say that
 \begin{align*}\begin{split}  
 R f(x) &\gtrsim a(x) \br{\int_{\mathcal{M}} \int_{\mathcal{M}} \frac{d(x_{i}^{\circ},y)^{2M - n_{i}} d(x_{i}^{\circ},z)^{2M -
      n_{i}}}{(d(x_{i}^{\circ},y) + d(x_{i}^{\circ},z))^{4 M - 2}}
  \mathbbm{1}_{d(x_{i}^{\circ},y) \leq d(x_{i}^{\circ},z)} \phi_{i}(y)
  \phi_{i}(z) f(y) f(z) \, dy \, dz}^{\frac{1}{2}} \\
&\geq a(x) \br{\int_{\mathcal{M}} d(x_{i}^{\circ},y)^{2 M - n_{i}} \brs{
  \int_{\mathcal{M}} d(x_{i}^{\circ},z)^{2 - 2 M -n_{i}}
  \mathbbm{1}_{d(x_{i}^{\circ},y) \leq d(x_{i}^{\circ},z)} \phi_{i}(z)
  f(z) \, dz}
  \phi_{i}(y)  f(y) \, dy}^{\frac{1}{2}}.
\end{split}\end{align*}
Set $f_{\varepsilon}(y) := d(x_{i}^{\circ},y)^{-\frac{n_{i}}{p}(1 +
  \varepsilon)} \phi_{i}(y)$ for $\varepsilon > 0$. It is obvious
that $f_{\varepsilon} \in L^{p}$. In the argument to follow, as is
frequently the case with asymptotic arguments, we will
need to keep careful track of the dependence of the constants on $\varepsilon$.  On applying the previous estimate
for $R f$ to the function $f_{\varepsilon}$,
\begin{equation}
  \label{eqtn:feps}
 R f_{\varepsilon}(x) \gtrsim a(x) \br{\int_{\mathcal{M}}
   d(x_{i}^{\circ},y)^{2M - n_{i}(1 + \frac{(1 + \varepsilon)}{p})}
   \phi_{i}(y) \int_{\mathcal{M}} d(x_{i}^{\circ},z)^{2 - 2 M - n_{i}(1 +
     \frac{(1 + \varepsilon)}{p})} \mathbbm{1}_{d(x_{i}^{\circ},y)
     \leq d(x_{i}^{\circ},z)} \phi_{i}(z) \, dz \, dy}^{\frac{1}{2}}.
 \end{equation}
 Evidently
 \begin{align*}\begin{split}  
 \int_{\mathcal{M}} d(x_{i}^{\circ},z)^{2 - 2 M - n_{i}(1 +
     \frac{(1 + \varepsilon)}{p})} \mathbbm{1}_{d(x_{i}^{\circ},y)
     \leq d(x_{i}^{\circ},z)} \phi_{i}(z) \, dz &\gtrsim
   \int^{\infty}_{d(x_{i}^{\circ},y)} r^{1 - 2 M - \frac{n_{i}(1 +
       \varepsilon)}{p}} \, dr \\
   &= \frac{1}{\frac{n_{i}(1 + \varepsilon)}{p} + 2 M - 2}
   d(x_{i}^{\circ},y)^{2 - 2 M - \frac{n_{i}(1 + \varepsilon)}{p}} \\
   &\gtrsim d(x_{i}^{\circ},y)^{2 - 2 M - \frac{n_{i}(1 + \varepsilon)}{p}},
 \end{split}\end{align*}
where the implicit constant in the final line is independent of
$\varepsilon$ since for $\varepsilon$ small $(1 + \varepsilon)^{-1}
\gtrsim 1$. On applying this to \eqref{eqtn:feps}, 
\begin{align*}\begin{split}  
  R f_{\varepsilon}(x) &\gtrsim a(x) \br{\int_{\mathcal{M}}
   d(x_{i}^{\circ},y)^{2 - n_{i}(1 + \frac{2(1 + \varepsilon)}{p})}
   \phi_{i}(y)  \, dy}^{\frac{1}{2}} \\
 &\simeq a(x) \br{\int_{D}^{\infty} r^{1 - 2 n_{i} \frac{(1 +
       \varepsilon)}{p}} \, dr}^{\frac{1}{2}},
\end{split}\end{align*}
for some constant $D > 0$ determined by the distance of $\mathrm{supp}
\, \phi_{i}$ to $x_{i}^{\circ}$. This will blow up everywhere on the non-zero
support of $a$ when $p > (1 + \varepsilon)n_{i}$, thereby forcing
the norm $\norm{R f_{\varepsilon}}_{p}$ to become infinite. Since $\varepsilon$ was arbitrary, this implies
that $R $ is an unbounded operator for any $p > n_{i}$.

\vspace*{0.1in}

For the case $p = n_{i}$, we must carefully consider the asymptotics
of $R f_{\varepsilon}$ as $\varepsilon \rightarrow 0$. From the
previous estimate,
\begin{align}\begin{split}
    \label{eqtn:feps2}
 R f_{\varepsilon}(x) &\gtrsim a(x) \br{\int^{\infty}_{D} r^{1 - 2(1
     + \varepsilon)} \, dr} \\
 &\gtrsim a(x) \frac{1}{\varepsilon} D^{-2 \varepsilon}.
\end{split}\end{align}
Also,
\begin{align}\begin{split}
    \label{eqtn:feps3}
\norm{f_{\varepsilon}}_{p} &= \br{\int_{\mathcal{M}}
  d(x_{i}^{\circ},y)^{-n_{i}(1 + \varepsilon)} \phi_{i}(y) \,
  dy}^{\frac{1}{p}} \\
&\lesssim \br{\int^{\infty}_{D} r^{-1 - n_{i} \varepsilon} \,
  dr}^{\frac{1}{p}} \\
&\simeq \frac{1}{\varepsilon^{\frac{1}{p}}} D^{-\varepsilon}.
\end{split}\end{align}
Suppose that $\norm{R f}_{n_{i}} \lesssim \norm{f}_{n_{i}}$. Then,
from \eqref{eqtn:feps2} and \eqref{eqtn:feps3}, we would have
$$
\frac{1}{\varepsilon} D^{-2 \varepsilon} \lesssim
\frac{1}{\varepsilon^{\frac{1}{p}}} D^{- \varepsilon}
\quad \Leftrightarrow \quad \varepsilon \lesssim \varepsilon^{p} D^{\varepsilon
p},
$$
which is clearly not true asymptotically as $\varepsilon \rightarrow 0$. A contradiction has been reached and we can safely conclude that
$R$ is also unbounded for $p = n_{i}$.
 \end{proof}

 \section{The Horizontal Square Function}
 \label{sec:Laplacian}

 In this section, Theorem \ref{thm:Main2} will be proved. In an
 analogous manner to the argument from Section \ref{sec:Low} for the
 operator $S$, $s$ can be expressed as
 \begin{align*}\begin{split}  
 sf(x) &= \br{\int^{\infty}_{0} \abs{t^{2} \Delta (I +
     t^{2}\Delta)^{-M}f(x)}^{2} \, \frac{dt}{t}}^{\frac{1}{2}} \\
 &\br{\int^{\infty}_{0} \abs{\Delta \br{\frac{1}{t^{2}} +
       \Delta}^{-M}f(x)}^{2} t^{3 - 4M} \, dt}^{\frac{1}{2}} \\
 &= \br{\int^{\infty}_{0} \abs{\Delta (k^{2} + \Delta)^{-M}f(x)}^{2}
   k^{4M - 5} \, dk}^{\frac{1}{2}}.
 \end{split}\end{align*}
This can be controlled from above by the sum of the high and low
energy components,
$$
s_{<}f(x) := \br{\int^{1}_{0} \abs{\Delta (k^{2} + \Delta)^{-M}f(x)}^{2}
   k^{4M - 5} \, dk}^{\frac{1}{2}}
 $$
 and
 $$
s_{>}f(x) := \br{\int^{\infty}_{1} \abs{\Delta (k^{2} + \Delta)^{-M}f(x)}^{2}
   k^{4M - 5} \, dk}^{\frac{1}{2}}.
 $$
The proof of the high energy part $s_{>}$ is quite similar to our discussion 
of the vertical square function. In fact it is simpler argument because the 
kernel of the operator $\Delta (k^{2} + \Delta)^{-M}$ is symmetric.
	Hence it suffices to prove only \eqref{xx} and \eqref{yy} 
	automatically follows. Thus one does not have to use the notion of Hodge 
	Laplacian this time. Otherwise the proof is unchanged and we skip it. Here we discuss only details of the low energy component. 
 
 %We consider the boundedness of these components separately.

 \subsection{Low Energy}

 As a first step to proving the boundedness of the low energy square
 function, observe that
 $$
 s_{<}f(x) = \br{\int^{1}_{0} \abs{\brs{(k^{2} + \Delta)^{-(M - 1)} -
     k^{2}(k^{2} + \Delta)^{-M}}f(x)}^{2} k^{4M - 5} \, dk}^{\frac{1}{2}}.
 $$
 From the formula
 $$
 (k^{2} + \Delta)^{-j} = \sum_{i = 1}^{4} H_{i}^{(j)}(k)
 $$
 we then obtain
 $$
s_{<}f(x) \lesssim \sum_{i = 1}^{4} s_{<}^{i}f(x),
$$
where
$$
s^{i}_{<}f(x) := \br{\int^{\infty}_{0} \abs{\brs{H^{(M - 1)}_{i}(k) -
      k^{2}H^{(M)}_{i}(k)}f(x)}^{2} k^{4M - 5} \, dk}^{\frac{1}{2}}.
$$
The $L^{p}$-boundedness and weak-type $(1,1)$ property of each of these operators will be proved separately.

\subsubsection{The Operator $s_{<}^{1}$}

Notice that
\begin{align*}\begin{split}  
 H^{(M - 1)}_{1}(k)(x,y) - k^{2} H_{1}^{(M)}(k)(x,y) &= \sum_{i =
   1}^{l} (\Delta_{\R^{n_{i}} \xx \mathcal{M}_{i}} + k^{2})^{-(M - 1)}(x,y)
 \phi_{i}(x) \phi_{i}(y) \\ & \qquad \qquad - k^{2}(\Delta_{\R^{n_{i}} \xx
   \mathcal{M}_{i}} + k^{2})^{-M}(x,y) \phi_{i}(x) \phi_{i}(y) \\
 &= \sum_{i = 1}^{l} \Delta_{\R^{n_{i}} \xx \mathcal{M}_{i}}
 \br{\Delta_{\R^{n_{i}} \xx \mathcal{M}_{i}} + k^{2}}^{-M}(x,y)
 \phi_{i}(x) \phi_{i}(y).
\end{split}\end{align*}
As this kernel only consists of finitely many terms, in order to prove
that $s_{<}^{1}$ is bounded on $L^{p}$ and weak-type $(1,1)$ it is sufficient to prove for
any $1 \leq i \leq l$ that the operator
$$
s_{<}^{1,i} := \br{\int^{1}_{0} \abs{\int_{\mathcal{M}}
    \Delta_{\R^{n_{i}} \xx \mathcal{M}_{i}} (\Delta_{\R^{n_{i}} \xx
      \mathcal{M}_{i}} + k^{2})^{-M}(x,y) \phi_{i}(x) \phi_{i}(y) f(y)
  \, dy}^{2} k^{4 M - 5} \, dk}^{\frac{1}{2}}
$$
is bounded on $L^{p}$ and weak-type $(1,1)$. Recall from classical theory that the operator
\begin{align*}\begin{split}
&\br{\int^{\infty}_{0} \abs{\Delta_{\R^{n_{i}} \xx \mathcal{M}_{i}}
    (\Delta_{\R^{n_{i}} \xx \mathcal{M}_{i}} + k^{2})^{-M}g(x)}^{2}
  k^{4M - 5} \, dk}^{\frac{1}{2}} \\ & \hspace*{2in} = \br{\int^{\infty}_{0} \abs{t^{2}
    \Delta_{\R^{n_{i}} \xx \mathcal{M}_{i}}(t^{2} \Delta_{\R^{n_{i}}
      \xx \mathcal{M}_{i}} + 1)^{-M} g(x)}^{2} \, \frac{dt}{t}}^{\frac{1}{2}} 
 \end{split}\end{align*}
is bounded on $L^{p}(\R^{n_{i}} \xx \mathcal{M}_{i})$ for all $p \in
(1,\infty)$ and weak-type $(1,1)$. Therefore,
\begin{align*}\begin{split}  
 \norm{s_{<}^{1,i}f}_{p} &= \norm{\phi_{i} \cdot \br{\int^{1}_{0}
     \abs{\Delta_{\R^{n_{i}} \xx \mathcal{M}_{i}} (\Delta_{\R^{n_{i}}
         \xx \mathcal{M}_{i}} + k^{2})^{-M} (\phi_{i} \cdot f)}^{2}
     k^{4 M - 5} \, dk}^{\frac{1}{2}}}_{p} \\
 &\lesssim \norm{  \br{\int^{\infty}_{0}
     \abs{\Delta_{\R^{n_{i}} \xx \mathcal{M}_{i}} (\Delta_{\R^{n_{i}}
         \xx \mathcal{M}_{i}} + k^{2})^{-M} (\phi_{i} \cdot f)}^{2}
     k^{4 M - 5} \, dk}^{\frac{1}{2}}}_{p}  \\
 &\lesssim \norm{f}_{p}
\end{split}\end{align*}
for any $p \in (1,\infty)$. Weak-type $(1,1)$ bounds follow in the same
manner.

\subsubsection{The Operator $s_{<}^{2}$}

The operator family $\lb H_{2}^{(M - 1)}(k) - k^{2} H_{2}^{(M)}(k)
\rb_{k \in (0,1)}$ constitutes a family of pseudodifferential
operators of order $2 - 2 M$. In a similar manner to Section
\ref{subsec:S2}, standard pseudodifferential operator theory can be
applied to yield the $L^{p}$-boundedness of $s_{<}^{2}$ for any $p \in
[1,\infty]$.

\subsubsection{The Operator $s_{<}^{3}$}

From an application of the triangle inequality followed by Minkowski's integral
inequality, the operator $s^{3}_{<}$ can be estimated from above by
$$
s^{3}_{<}f(x) \lesssim W_{1}f(x) + W_{2}f(x) = \sum_{i = 1, 2}
\int_{\mathcal{M}} w_{i}(x,y) \abs{f(y)} \, dy,
$$
where $W_{1}$ and $W_{2}$ are operators with the respective kernels
$$
w_{1}(x,y) := \br{\int^{1}_{0} \abs{H_{3}^{(M -
      1)}(k)(x,y)}^{2} k^{4M - 5} \, dk}^{\frac{1}{2}} 
$$
and
$$
w_{2}(x,y) :=  \br{\int^{1}_{0}
  \abs{H^{(M)}_{3}(k)(x,y)}^{2} k^{4 M - 1} \, dk}^{\frac{1}{2}}.
$$
The $L^{p}$-boundedness and weak-type $(1,1)$ property of the operators $W_{1}$ and $W_{2}$ will be
proved separately.

\begin{lem} 
 \label{lem:W1} 
 The operator $W_{1}$ is bounded on $L^{p}$ for all $p
 \in (1,\infty)$ and weak-type $(1,1)$.
 \end{lem}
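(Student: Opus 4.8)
The plan is to mirror the treatment of the operator $S_{<}^{3}$ in Section \ref{subsec:S3}, observing that the kernel $w_{1}$ obeys pointwise bounds that are one power of decay stronger than those of $h_{3}$; this extra decay is precisely what upgrades the conclusion from the range $(1,n_{min})$ to the full range $(1,\infty)$. First I would apply Proposition \ref{prop:H3} with $M$ replaced by $M-1$ (legitimate since $M>1$, so $M-1\geq 1$), which gives
\[
\abs{H_{3}^{(M-1)}(k)(x,y)} \lesssim k^{-2(M-2)}\,\omega_{2}(x,k)\,\omega_{2}(y,k), \qquad 0<k\leq 1 .
\]
Substituting this into the definition of $w_{1}$, the powers of $k$ combine as $-4(M-2)+(4M-5) = 3$, so that
\[
w_{1}(x,y)^{2} \lesssim \int^{1}_{0} k^{3}\,\omega_{2}(x,k)^{2}\,\omega_{2}(y,k)^{2}\,dk .
\]
Using the exponential factors in $\omega_{2}$ together with $\int^{1}_{0} k^{3} e^{-2ck\rho}\,dk \lesssim \langle \rho\rangle^{-4}$, this yields the pointwise estimates $w_{1}(x,y)\lesssim 1$ for all $x,y\in\mathcal{M}$; $w_{1}(x,y)\lesssim \langle d(x_{i}^{\circ},x)\rangle^{-n_{i}}$ for $x\in\R^{n_{i}}\xx\mathcal{M}_{i}\setminus K_{i}$ and $y\in K$; $w_{1}(x,y)\lesssim \langle d(x_{j}^{\circ},y)\rangle^{-n_{j}}$ for $x\in K$ and $y\in\R^{n_{j}}\xx\mathcal{M}_{j}\setminus K_{j}$; and, for $x\in\R^{n_{i}}\xx\mathcal{M}_{i}\setminus K_{i}$ and $y\in\R^{n_{j}}\xx\mathcal{M}_{j}\setminus K_{j}$,
\begin{align*}
w_{1}(x,y) \lesssim \min\Big(&\langle d(x_{i}^{\circ},x)\rangle^{-n_{i}}\langle d(x_{j}^{\circ},y)\rangle^{2-n_{j}},\ \langle d(x_{i}^{\circ},x)\rangle^{2-n_{i}}\langle d(x_{j}^{\circ},y)\rangle^{-n_{j}},\\
&\ \langle d(x_{i}^{\circ},x)\rangle^{1-n_{i}}\langle d(x_{j}^{\circ},y)\rangle^{1-n_{j}}\Big).
\end{align*}
Each of these is the bound from \eqref{eqtn:h31}, \eqref{eqtn:h32}, \eqref{eqtn:h33} improved by one additional factor of $\langle d\rangle^{-1}$.

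Since $W_{1}f(x)=\int_{\mathcal{M}} w_{1}(x,y)\abs{f(y)}\,dy$, H\"{o}lder's inequality reduces the $L^{p}$-boundedness of $W_{1}$ to the finiteness of $\int_{\mathcal{M}}\br{\int_{\mathcal{M}} w_{1}(x,y)^{p'}\,dy}^{p/p'}\,dx$, and, exactly as for $S_{<}^{3}$, this follows from the analogues of \eqref{eqtn:KernelCondition1}--\eqref{eqtn:KernelCondition4} over the four regions $K\xx K$, $(\R^{n_{i}}\xx\mathcal{M}_{i}\setminus K_{i})\xx K$, $K\xx(\R^{n_{j}}\xx\mathcal{M}_{j}\setminus K_{j})$ and $(\R^{n_{i}}\xx\mathcal{M}_{i}\setminus K_{i})\xx(\R^{n_{j}}\xx\mathcal{M}_{j}\setminus K_{j})$. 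The first two regions are handled verbatim as in Section \ref{subsec:S3} and converge for every $p>1$. For the third region, the stronger decay $\langle d(x_{j}^{\circ},y)\rangle^{-n_{j}}$ (rather than $\langle d(x_{j}^{\circ},y)\rangle^{1-n_{j}}$ for $h_{3}$) makes $\int_{\R^{n_{j}}\xx\mathcal{M}_{j}\setminus K_{j}}\langle d(x_{j}^{\circ},y)\rangle^{-n_{j}p'}\,dy$ finite for every $p'>1$, i.e. for all $p\in(1,\infty)$; this is exactly where the restriction $p<n_{min}$ disappears. For the fourth region I would split the $y$-integral over $\lb d(x_{j}^{\circ},y)\geq d(x_{i}^{\circ},x)\rb$ and $\lb d(x_{j}^{\circ},y)< d(x_{i}^{\circ},x)\rb$ and use, respectively, the second and the first entry of the minimum above. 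On the first piece the exponent count reduces to requiring $p>\frac{n_{i}-n_{j}}{n_{i}-2}$, which holds for all $p>1$ since $n_{j}\geq 3$; on the second piece one splits into the cases $p>\frac{n_{j}}{2}$, $p<\frac{n_{j}}{2}$ and $p=\frac{n_{j}}{2}$ precisely as in the treatment of \eqref{eqtn:I2}, each again converging for $p>1$ (the borderline value being absorbed into an arbitrarily small power of $\langle d(x_{i}^{\circ},x)\rangle$). This establishes $L^{p}$-boundedness of $W_{1}$ for all $p\in(1,\infty)$.

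Finally, for the weak-type $(1,1)$ bound, the pointwise estimates give $\sup_{y\in\mathcal{M}} w_{1}(x,y) \lesssim \langle d(x_{i}^{\circ},x)\rangle^{-n_{i}}$ for $x\in\R^{n_{i}}\xx\mathcal{M}_{i}\setminus K_{i}$ and $\sup_{y\in\mathcal{M}} w_{1}(x,y)\lesssim 1$ for $x\in K$, so $\abs{W_{1}f(x)} \leq \br{\sup_{y\in\mathcal{M}} w_{1}(x,y)}\norm{f}_{1}$; the function $x\mapsto \sup_{y} w_{1}(x,y)$ lies in $L^{1,\infty}(\mathcal{M})$ because the volume of $\lb x : d(x_{i}^{\circ},x)\leq \rho\rb$ grows like $\rho^{\,n_{i}}$ on each end, whence $\norm{W_{1}f}_{L^{1,\infty}}\lesssim \norm{f}_{1}$. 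I do not expect a genuine obstacle here: the whole argument is a routine adaptation of the computations already carried out for $S_{<}^{3}$ in Section \ref{subsec:S3}, the only point demanding some care being the exponent bookkeeping in the end-by-end region and the borderline value $p=\frac{n_{j}}{2}$.
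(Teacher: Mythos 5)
Your proposal is correct and follows essentially the same route as the paper: apply Proposition \ref{prop:H3} at order $M-1$, observe that the resulting kernel bound for $w_{1}$ carries one extra power of decay compared with $h_{3}$ (so the interior integral over $D_{j}^{1}$ now converges for all $p'>1$ rather than only for $p'>n_{j}'$), and then repeat the region-by-region Schur-type integrability check of Section \ref{subsec:S3}, with the weak-type $(1,1)$ bound coming from the $\sup_{y}w_{1}(x,y)\in L^{1,\infty}$ argument. The only cosmetic deviations are that you record a third (unused) entry in the pointwise minimum, and you re-derive the $J_{2}$ case split rather than simply citing that $J_{2}$ coincides with the term $I_{2}$ from \eqref{eqtn:I2}; both agree with the paper's conclusions.
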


\begin{proof}  
 As in Section
\ref{subsec:S3}, in order to obtain the boundedness of $W_{1}$ on
$L^{p}$ for $1 < p < \infty$ it suffices to demonstrate that the kernel
satisfies the estimates \eqref{eqtn:KernelCondition1},
\eqref{eqtn:KernelCondition2}, \eqref{eqtn:KernelCondition3} and
\eqref{eqtn:KernelCondition4}.

For the condition \eqref{eqtn:KernelCondition1}, observe that
Proposition \ref{prop:H3} implies $\abs{H_{3}^{(M - 1)}(k)(x,y)} \lesssim k^{4 -
2 M}$ for all $x, \, y \in \mathcal{M}$. This estimate, when applied
to the definition of $w_{1}(x,y)$, gives
\begin{equation}
  \label{eqtn:w11}
w_{1}(x,y) \lesssim 1 \quad \forall \ x, \, y \in \mathcal{M},
\end{equation}
which immediately implies the validity of
\eqref{eqtn:KernelCondition1}

 For
\eqref{eqtn:KernelCondition2}, Proposition \ref{prop:H3} implies that
for $x \in \R^{n_{i}} \xx \mathcal{M}_{i} \setminus K_{i}$ and $y \in K$,
$$
\abs{H_{3}^{(M - 1)}(k)(x,y)} \lesssim  k^{4 - 2 M} \langle
d(x_{i}^{\circ}, x) \rangle^{2
  - n_{i}} \exp (- c k d(x_{i}^{\circ},x)).
$$
Therefore,
\begin{align}\begin{split}
    \label{eqtn:w12}
 w_{1}(x,y) &= \br{\int^{1}_{0} \abs{H_{3}^{(M - 1)}(k)(x,y)}^{2} k^{4
     M - 5} \, dk}^{\frac{1}{2}} \\
 &\lesssim \langle d(x_{i}^{\circ},x) \rangle^{2 - n_{i}}
 \br{\int^{1}_{0} k^{3} \exp(- 2 c k d(x_{i}^{\circ},x)) \,
   dk}^{\frac{1}{2}} \\
 &\lesssim \langle d(x_{i}^{\circ},x) \rangle^{- n_{i}}.
\end{split}\end{align}
This will clearly imply \eqref{eqtn:KernelCondition2} when $p > 1$.

\vspace*{0.1in}

Let us next consider \eqref{eqtn:KernelCondition3}. For $x \in K$ and
$y \in \R^{n_{j}} \xx \mathcal{M}_{j} \setminus K_{j}$, Proposition \ref{prop:H3}
implies that
$$
\abs{H_{3}(k)(x,y)} \lesssim k^{4 - 2M} \langle d(x_{j}^{\circ}, y)
\rangle^{2 - n_{j}} \exp(- c k d(x_{j}^{\circ},y)).
$$
Therefore,
\begin{align*}\begin{split}  
 w_{1}(x,y) &= \br{\int^{1}_{0} \abs{H_{3}^{(M - 1)}(k)(x,y)}^{2} k^{4
     M - 5} \, dk}^{\frac{1}{2}} \\
 &\lesssim \langle d(x_{j}^{\circ},y) \rangle^{2 - n_{j}} \br{\int^{1}_{0}
   k^{3} \exp(- 2 c k d(x_{j}^{\circ},y)) \, dk}^{\frac{1}{2}} \\
 &\lesssim \langle  d(x_{j}^{\circ},y) \rangle^{-n_{j}}.
\end{split}\end{align*}
It is easy to see that this implies \eqref{eqtn:KernelCondition3}
provided $p < \infty$

\vspace*{0.1in}

Finally, it remains to validate the estimate
\eqref{eqtn:KernelCondition4}. Proposition \ref{prop:H3} once again
implies that for $x \in \R^{n_{i}} \xx \mathcal{M}_{i} \setminus K_{i}$ and $y \in
\R^{n_{j}} \xx \mathcal{M}_{j} \setminus K_{j}$,
\begin{align}\begin{split}
    \label{eqtn:w13}
 w_{1}(x,y) &\lesssim \langle d(x_{i}^{\circ},x) \rangle^{2 - n_{i}}
\langle d(x_{j}^{\circ},y) \rangle^{2 - n_{j}} \br{\int^{1}_{0} k^{3} \exp(- 2 c k
   (d(x_{i}^{\circ},x) + d(x_{j}^{\circ},y))) \, dk}^{\frac{1}{2}} \\
 &\lesssim \langle d(x_{i}^{\circ},x) \rangle^{2 - n_{i}}
\langle d(x_{j}^{\circ},y) \rangle^{2 - n_{j}}
\br{\frac{1}{(d(x_{i}^{\circ},x) +
    d(x_{j}^{\circ},y))^{4}}}^{\frac{1}{2}} \\
&\leq \min \br{\langle d(x_{i}^{\circ},x) \rangle^{- n_{i}} \langle
  d(x_{j}^{\circ},y) \rangle^{2 - n_{j}}, \langle d(x_{i}^{\circ},x)
    \rangle^{2 - n_{i}} \langle d(x_{j}^{\circ},y) \rangle^{-n_{j}}}.
\end{split}\end{align}
Applying this to \eqref{eqtn:KernelCondition4},
\begin{align*}\begin{split}  
 \int_{\R^{n_{i}} \xx \mathcal{M}_{i} \setminus K_{i}} &\br{\int_{\R^{n_{j}} \xx
     \mathcal{M}_{j} \setminus K_{j}} w_{1}(x,y)^{p'} \, dy}^{\frac{p}{p'}} \, dx \\
 &\lesssim \int_{\R^{n_{i}} \xx \mathcal{M}_{i} \setminus K_{i}}
 \br{\int_{D^{1}_{j}(x)} \langle d(x_{i}^{\circ},x) \rangle^{(2 -
     n_{i})p'} \langle d(x_{j}^{\circ},y) \rangle^{-n_{j} p'} \,
   dy}^{\frac{p}{p'}} \, dx \\
 &\qquad + \int_{\R^{n_{i}} \xx \mathcal{M}_{i} \setminus K_{i}}
 \br{\int_{D^{2}_{j}(x)} \langle d(x_{i}^{\circ},x) \rangle^{-n_{i} p'} \langle d(x_{j}^{\circ},y) \rangle^{(2-n_{j}) p'} \,
   dy}^{\frac{p}{p'}} \, dx \\
 &=: J_{1} + J_{2},
\end{split}\end{align*}
where $D_{j}^{1}(x)$ and $D_{j}^{2}(x)$ are as defined in Section \ref{subsec:S3}.
For the first term,
$$
J_{1} = \int_{\R^{n_{i}} \xx \mathcal{M}_{i} \setminus K_{i}}
\frac{1}{\langle d(x_{i}^{\circ},x) \rangle^{(n_{i} - 2)p}}
\br{\int_{D^{1}_{j}(x)} \frac{dy}{\langle d(x_{j}^{\circ},y)
    \rangle^{n_{j}p'}}}^{\frac{p}{p'}} \, dx.
$$
For the interior integral, in an analogous manner to \eqref{eqtn:D1j},
$$
\int_{D^{1}_{j}(x)} \frac{dy}{\langle d(x_{j}^{\circ},y)
  \rangle^{n_{j}p'}} \lesssim \int_{\R^{n_{j}}} \frac{dy_{1}}{(|y_{1}
  - x_{j,1}^{\circ}| + d(x_{i}^{\circ},x))^{n_{j} p'}}.
$$
This will be integrable since $p' > 1$, in which case
\begin{align*}\begin{split}  
 \int_{D^{1}_{j}(x)} \frac{dy}{\langle d(x_{j}^{\circ},y)
  \rangle^{n_{j}p'}} &\lesssim \int^{\infty}_{d(x_{i}^{\circ},x)}
\frac{r^{n_{j} - 1}}{r^{n_{j}p'}} \, dr \\
&\simeq \brs{- r^{n_{j}(1 - p')}}^{\infty}_{d(x_{i}^{\circ},x)} \\
&= \langle d(x_{i}^{\circ},x) \rangle^{n_{j}(1 - p')}.
\end{split}\end{align*}
Applying this estimate to $J_{1}$ gives
\begin{align*}\begin{split}  
 J_{1} &\lesssim \int_{\R^{n_{i}} \xx \mathcal{M}_{i} \setminus K_{i}}
 \frac{1}{\langle d(x_{i}^{\circ},x) \rangle^{(n_{i} - 2)p}} \cdot
 \langle d(x_{i}^{\circ},x) \rangle^{-n_{j}} \, dx \\
 &= \int_{\R^{n_{i}} \xx \mathcal{M}_{i} \setminus K_{i}}
 \frac{dx}{\langle d(x_{i}^{\circ},x) \rangle^{(n_{i} - 2)p + n_{j}}}.
\end{split}\end{align*}
This will be finite provided that
$$
(n_{i} - 2)p + n_{j} > n_{i} \quad \Leftrightarrow \quad p >
\frac{n_{i} - n_{j}}{n_{i} - 2}.
$$
Since $\frac{n_{i} - n_{j}}{n_{i} - 2} < \frac{n_{i} - 2}{n_{i} - 2} =
1$, this will be satisfied when $p > 1$. It has therefore been proved
that $J_{1}$ is finite for $1 < p < \infty$.

It remains to consider the term $J_{2}$. Looking back to our proof of
the boundedness of the square function $S$, the term $J_{2}$ is
identical to the term $I_{2}$ from
Section \ref{subsec:S3}. In Section \ref{subsec:S3} it was proved that $I_{2}$
is finite for all $p \in (1,\infty)$. Thus it can be safely concluded
that $W_{1}$ is bounded on $L^{p}$ for all $p \in (1,\infty)$.

Let us now prove that $W_{1}$ is weak-type $(1,1)$. On
combining \eqref{eqtn:w11}, \eqref{eqtn:w12} and \eqref{eqtn:w13}, it
is evident that
$$
\sup_{y \in \mathcal{M}} \abs{w_{1}(x,y)} \lesssim
\left\lbrace \begin{array}{c c} 
 \langle d(x_{i}^{\circ},x) \rangle^{-n_{i}} & for \ x \in \R^{n_{i}}
                                               \xx \mathcal{M}_{i}
                                               \setminus K_{i}, \ 1 \leq i
                                               \leq l, \\
               1 & for \ x \in K.
 \end{array} \right.
$$
This implies that for $f \in C^{\infty}_{c}$,
\begin{align*}\begin{split}  
 \abs{W_{1}f(x)} &\leq \int_{\mathcal{M}} w_{1}(x,y) \abs{f(y)} \, dy
 \\
 &\leq \sup_{y \in \mathcal{M}} \abs{w_{1}(x,y)} \int_{\mathcal{M}}
 \abs{f(y)} \, dy \\
 &\lesssim \left\lbrace \begin{array}{c c} 
 \langle d(x_{i}^{\circ},x) \rangle^{-n_{i}} \norm{f}_{L^{1}} & for \
                                                                x \in
                                                                \R^{n_{i}}
                                                                \xx
                                                                \mathcal{M}_{i}
                                                                \setminus
                                                                K_{i}, \ 1
                                                                \leq i
                                                                \leq l,
                          \\
                          \norm{f}_{L^{1}} & for \ x \in K.
 \end{array} \right.
 \end{split}\end{align*}
This function is clearly in $L^{1,\infty}$ and thus
$W_{1}$ is weak-type $(1,1)$.
\end{proof}

It remains to bound the operator $W_{2}$. This follows trivially once
one notices that $k^{2} H^{(M)}_{3}(k)(x,y)$ will satisfy the same
asymptotic estimates as $H^{(M - 1)}_{3}(k)(x,y)$. Indeed, from
Proposition \ref{prop:H3},
$$
k^{2} H^{(M)}_{3}(k)(x,y) \lesssim k^{4 - 2M} \omega_{2}(x,k) \omega_{2}(y,k).
$$
The argument used for the operator $W_{1}$ is therefore equally
applicable to $W_{2}$, and thus $W_{2}$ is $L^{p}$-bounded for $1 < p
< \infty$ and weak-type $(1,1)$. It can then be concluded that 
$s_{<}^{3}$ is bounded on $L^{p}$ for all $p \in
(1,\infty)$ and weak-type $(1,1)$.

\subsubsection{The Operator $s_{<}^{4}$}

The previous proof of the $L^{p}$-boundedness and weak-type $(1,1)$
property of $s^{3}_{<}$ was
derived entirely from the pointwise estimates for $H^{(M -
  1)}_{3}(k)(x,y)$ and $H^{(M)}_{3}(k)(x,y)$ provided by Proposition \ref{prop:H3}. Since the kernels $H^{(M -
  1)}_{4}(k)(x,y)$ and $H^{(M)}_{4}(k)(x,y)$ satisfy even stronger size estimates, provided by
Proposition \ref{prop:H4}, it follows that $s^{4}_{<}$ must also be bounded on
$L^{p}$ for all $p \in (1,\infty)$ and weak-type $(1,1)$.

\subsection{The Reverse Inequality}
\label{subsec:Reverses}

The proof of the reverse inequality for $s$, $\norm{f}_{p} \lesssim
\norm{s f}_{p}$ for $p \in (1,\infty)$, follows in an essentially
identical manner to our proof of the reverse inequality for $S$ from
Section \ref{sec:Reverse}. The only two notable differences that one
will encounter is that the proof must begin with the resolution of the
identity
$$
f = c_{M} \int^{\infty}_{0} (t \Delta)^{2} (1 + t \Delta)^{-2 M}f \, \frac{dt}{t},
$$
and subsequently the identity
$$
\int_{\mathcal{M}} (t \Delta)^{2} (1 + t \Delta)^{-2 M}f(x) \cdot
\overline{g(x)} \, dx = \int_{\mathcal{M}} t \Delta (1 + t
\Delta)^{-M}f(x) \cdot \overline{t \Delta (1 + t \Delta)g(x)} \, dx
$$
must be utilised.

\noindent
{\bf Acknowledgements:} JB and  AS  were
supported by Australian Research Council  Discovery Grant DP DP160100941.
In addition AS was supported by Australian Research Council  Discovery Grant DP200101065. We would like to thank Pierre Portal for interesting comments and referring us to \cite{hytonenNeervenWeis}.

\end{document}